\documentclass[a4paper]{amsart}

\usepackage[utf8]{inputenc}
\usepackage[british]{babel}
\usepackage{amsfonts,amsmath,amssymb,amsthm,mathtools,esint}
\usepackage{eucal}
\usepackage{tikz}
\usetikzlibrary{arrows}
\usepackage{enumitem}
\usepackage{thmtools} 
\usepackage{cleveref}
\usepackage{comment}
\crefname{subsection}{subsection}{subsections}
\numberwithin{equation}{section}
\usepackage{pgfplots}
\usetikzlibrary{pgfplots.groupplots}
\pgfplotsset{compat=1.3}
\usepackage{xcolor}
\usepackage{caption}

\renewcommand{\div}{\operatorname{div}}
\newcommand{\Fcal}{\mathcal{F}}
\newcommand{\Tcal}{\mathcal{T}}
\newcommand{\pw}{\mathrm{pw}}
\newcommand{\osc}{\operatorname{osc}}
\newtheorem{theorem}{Theorem}[section]
\newtheorem{lemma}[theorem]{Lemma}
\newtheorem{corollary}[theorem]{Corollary}

\theoremstyle{definition}
\newtheorem{definition}[theorem]{Definition}
\newtheorem*{conditionC}{Condition C}
\theoremstyle{remark}
\newtheorem{remark}[theorem]{Remark}
\newtheorem{example}[theorem]{Example}

\begin{document}

\title[Discrete Kirchhoff elements]
      {An error analysis of discrete Kirchhoff elements}

\author[D. Gallistl]{Dietmar Gallistl}
\author[N. T. Tran]{Ngoc Tien Tran}

\thanks{This work received funding from
        the European Union's Horizon 2020 research and innovation 
         programme (project DAFNE, grant agreement No.~891734, 
         and project RandomMultiScales, grant agreement No.~865751).
	}

\address[D. Gallistl]{Institut für Mathematik,
                      Universität Jena, 07743 Jena, Germany}
\email{dietmar.gallistl [at] uni-jena.de}
\address[N. T. Tran]{Institut für Mathematik,
                      Universität Augsburg, 86159 Augsburg, Germany}
\email{ngoc1.tran [at] uni-a.de}

\date{}

\keywords{discrete Kirchhoff triangle, DKT, minimal regularity, 
          discrete stream function}

\subjclass[2010]{65N12, 65N15, 65N30, 65Y20}

\begin{abstract}
The Discrete Kirchhoff Triangle (DKT) method for the biharmonic
equation is analyzed in the discrete energy norm.
The error is bounded by the best approximation of the Hessian 
by piecewise constants and the oscillation of the right-hand side,
without additional regularity assumptions on the exact solution.
This result implies first-order convergence of the classical DKT
element and 
the analysis yields a canonical extension to three space dimensions
with the same approximation properties.
Residual-based a posteriori error estimates are derived.
The analysis is formulated within a general framework for 
low-order nonconforming methods, which also applies to 
various classical elements and yields best-approximation 
results by constants.
It is furthermore shown how known stable pairs for
the planar Stokes system have discrete stream functions in
discrete Kirchhoff spaces. This yields variants of the known
schemes with positive
definite formulations and pressure-robust error bounds.
\end{abstract}

\maketitle

\section{Introduction and main results}

The Discrete Kirchhoff Triangle (DKT) is a finite-element based
numerical scheme for discretizing variational problems posed in subspaces
of the Sobolev space $H^2$, originally invented and developed for
mechanical models of plate bending. Therein, the complicated design
and implementation of conforming finite element methods (FEM)
is circumvented by introducing a separate variable $\theta_h$
representing a discrete gradient. The solution $u$ is approximated
by $u_h$ and its gradient $\nabla u$ by $\theta_h$, and the
quantities $\nabla u_h$ and $\theta_h$ are coupled through
discrete conditions and not through pointwise identity.
The resulting scheme is easy to implement with standard finite
element assembly, once the discrete coupling is encoded
in an additional matrix. In plate analysis, the assumption 
that the displacement gradient $\nabla u$ equals the in-plate
rotation $\theta$ is known as Kirchhoff's hypothesis,
explaining the nomenclature of the DKT element, which enforces
this constraint discretely.
This note is devoted to a structural analysis of the DKT
paradigm for a larger class of low-order discretizations, which
we expose for the biharmonic equation under clamped boundary
conditions for the sake of simplicity of the presentation.

Let $\Omega\subset \mathbb R^{n}$ be an open,
bounded, connected Lipschitz polytope in dimension
$n\in\{2,3\}$, 
with outer unit normal
$\nu$. Given a right-hand side $f\in L^2(\Omega)$,
the weak form of the problem $\Delta^2 u = f$ in $\Omega$ subject
to the clamped boundary condition
$u=\nabla u\cdot\nu=0$ on $\partial \Omega$ seeks 
$u \in V \coloneqq H^2_0(\Omega)$ with 
\begin{align}\label{def:continuous-problem}
    a(u, v) \coloneqq (D^2 u, D^2 v)_{L^2(\Omega)}
    = (f, v)_{L^2(\Omega)} \quad\text{for any } v \in V.
\end{align}
Let $\Tcal$ be a regular triangulation of $\Omega$ into simplices
and let the space $V_h$ consist of piecewise polynomial functions
over $\Omega$, while $\Theta_h$ is a space of piecewise polynomial
vector fields.
Assume we are given a linear discrete gradient map 
$\nabla_h : V_h \to \Theta_h$
that defines the discrete Hessian $D^2_h \coloneqq D_\pw \circ \nabla_h$.
Here and throughout this work, the index $\pw$ attached to a differential
operator denotes its piecewise action with respect to a given mesh
$\Tcal$.
Then the discrete problem seeks a solution $u_h \in V_h$ to
\begin{align}\label{def:discrete-problem}
	a_h(u_h,v_h) = (f,v_h)_{L^2(\Omega)} \quad\text{for any } v_h \in V_h
\end{align}
with the bilinear form
\begin{align*}
	a_h(u_h,v_h) \coloneqq (D_h^2 u_h, D_h^2 v_h)_{L^2(\Omega)}.
\end{align*}
The seminorm induced by $a_h$ is denoted by $\|\cdot\|_{h}$.
If $V_h\subset H^1_0(\Omega)$ is satisfied, we can relax
the conditions on $f$ to $f\in H^{-1}(\Omega)$ in 
\eqref{def:continuous-problem}--\eqref{def:discrete-problem}.

The main result of this note is that the error $\sigma-\sigma_h$
in the $L^2$ norm between $\sigma \coloneqq D^2 u$ and 
$\sigma_h \coloneqq D_h^2 u_h$ is bounded from above by the
best-approximation of $\sigma$ by piecewise constants,
written $\Pi_0\sigma$, plus
oscillations of the right-hand side $f$, provided the following
conditions are satisfied.
In what follows, we denote by $\mathcal F$ the set of faces
of $\Tcal$ and by the brackets $[\cdot]_F$ the jump of a
piecewise polynomial function across a face $F\in\mathcal F$.
For boundary faces, the brackets denote the trace.
The $L^2$ norm over a measurable set $\omega$
is written as $\|\cdot\|_\omega$ with the convention
$\|\cdot\| \coloneqq \|\cdot\|_\Omega$. 

\begin{conditionC}
We assume that any $v_h \in V_h$, any simplex $T \in \Tcal$
with diameter $h_T$, 
and any face $F \in \Fcal$ with diameter $h_F$
satisfy subsequent conditions (C1)--(C5).
\begin{enumerate}[wide,itemsep=0.8em]
\item[(C1)] $\displaystyle \int_F [\nabla_h v_h]_F = 0$

\item[(C2)] $\displaystyle \|[v_h]\|_F 
           \lesssim h_F \|\nabla_t [v_h]_F\|_F$
          (with the tangential gradient $\nabla_t$)

\item[(C3)] $\|D^2_h v_h - D^2 v_h\|_T
\lesssim
\min \left\{
	\|(1 - \Pi_0) D^2 v_h\|_T ,
	\|(1 - \Pi_0) D_h^2 v_h\|_T
\right\}$

\item[(C4)] $\|h_T^{-1}(\nabla v_h - \nabla_h v_h)\|_T
\lesssim \|D^2 v_h - D_h^2 v_h\|_T$

\item[(C5)] 
There exists a bounded linear map $Q_h : V \to V_h$
such that
\begin{align*}
	\sum_{j=0}^2 \|h_T^{j-2}D^{j}(v - Q_h v)\|_T
	\lesssim
	\|(1 - \Pi_0) D^2 v\|_{\omega_T}
  \quad\text{for all }v \in V,\;T \in \Tcal,
\end{align*}
where $\omega_T$, the interior of the set
$\cup\{K\in\Tcal:K\cap T\neq\emptyset\}$,
 is the element patch of $T$.

\end{enumerate}
\end{conditionC}

As a consequence of (C1)--(C4), the bilinear form
$a_h$ is positive definite over $V_h$. In fact, if $D^2_h v_h = 0$,
then $\nabla_h v_h$ is piecewise constant,
(C1) shows that it equals zero.
From (C3), we infer $D^2_\pw v_h \equiv 0$.
Hence, (C4) implies $\nabla_\pw v_h \equiv 0$ and so, 
$v_h$ is piecewise constant. By (C2), $v_h \equiv 0$.
The positivity of $a_h$ shows that \eqref{def:discrete-problem}
possesses a unique discrete solution $u_h$.
The error estimate reads:
\begin{theorem}[a~priori]\label{thm:a-priori}
	Suppose that (C1)--(C5) hold, then the discrete solution
        $u_h \in V_h$ to \eqref{def:discrete-problem} satisfies
	\begin{align*}
          \|\sigma  - D^2_\pw u_h\| + \|\sigma-\sigma_h\| \lesssim \|(1 - \Pi_0)\sigma\|
          + \osc(f,\Tcal).
	\end{align*}
\end{theorem}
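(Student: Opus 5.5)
We argue by a Strang-type splitting. Let $v_h\coloneqq Q_h u\in V_h$ with $Q_h$ from (C5), and set $e_h\coloneqq u_h-Q_hu$. The triangle inequality gives
\[
\|\sigma-\sigma_h\|\le \|\sigma-D_h^2Q_hu\|+\|e_h\|_h .
\]
For the first term we insert $D^2_\pw Q_hu$ and use (C3) with the first entry of the minimum, then (C5):
\[
\|D^2_hQ_hu-D^2_\pw Q_hu\|_T\lesssim\|(1-\Pi_0)D^2Q_hu\|_T\le\|D^2(u-Q_hu)\|_T+\|(1-\Pi_0)\sigma\|_T .
\]
Combined with $\|D^2(u-Q_hu)\|_T\lesssim\|(1-\Pi_0)\sigma\|_{\omega_T}$ from (C5) and finite overlap of the patches, this yields $\|\sigma-D_h^2Q_hu\|\lesssim\|(1-\Pi_0)\sigma\|$. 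The bound for $\|\sigma-D^2_\pw u_h\|$ then follows from the bound for $\|\sigma-\sigma_h\|$. Indeed, (C3) with the second entry of the minimum gives $\|D^2_\pw u_h-\sigma_h\|\lesssim\|(1-\Pi_0)\sigma_h\|\le\|\sigma_h-\Pi_0\sigma\|\le\|\sigma_h-\sigma\|+\|(1-\Pi_0)\sigma\|$.

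It remains to bound $\|e_h\|_h^2=(f,e_h)-(D_h^2Q_hu,D_h^2e_h)$. Since $\|D_h^2Q_hu-\sigma\|$ is already controlled, it suffices to estimate the consistency functional
\[
\ell(w_h)\coloneqq (f,w_h)-(\sigma,D_h^2w_h),\qquad \sup_{\|w_h\|_h=1}\ell(w_h).
\]
We split $\sigma=\Pi_0\sigma+(1-\Pi_0)\sigma$. The second part contributes at most $\|(1-\Pi_0)\sigma\|$. For the constant part we integrate by parts elementwise: $(\Pi_0\sigma,D_\pw\nabla_hw_h)_T$ reduces to boundary terms $\int_{\partial T}\Pi_0\sigma\,\nu\cdot\nabla_hw_h$. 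Summing over $T$, these become face terms $\int_F[\Pi_0\sigma\,\nu\cdot\nabla_hw_h]$. By (C1), on each face we may subtract face means, which leaves only the jumps $[\Pi_0\sigma]_F$ tested against $\nabla_hw_h$ minus its face mean.

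The main obstacle is to compensate these jump terms with $(f,w_h)$, using only the regularity $u\in H^2_0$. The approach is to use the distributional identity $\div\div\sigma=f$ tested against a conforming companion of $w_h$. Concretely, we write $(f,w_h)=(f,w_h-J w_h)+(\sigma,D^2Jw_h)$ for a suitable $H^2_0$-conforming smoother $J$, or alternatively we use $Q_h$-type local averaging together with (C2) and (C4) to control $w_h-Jw_h$ in $h$-weighted norms by $\|D^2_\pw w_h-D_h^2w_h\|$ and hence by $\|w_h\|_h$ via (C3). Then
\[
\ell(w_h)=(f,w_h-Jw_h)+(\sigma,D^2Jw_h-D_h^2w_h).
\]
In the second term, $\Pi_0\sigma$ paired with $D^2Jw_h-D_h^2w_h$ vanishes up to face terms that (C1) kills; these are the integral means of gradients over faces, preserved by $J$. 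The remaining part is bounded by $\|(1-\Pi_0)\sigma\|\,\|w_h\|_h$ using the stability of $J$.

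For the first term we write $f=\Pi_0f+(1-\Pi_0)f$. The oscillation part gives $\|h_T^2(1-\Pi_0)f\|\,\|h_T^{-2}(w_h-Jw_h)\|\lesssim\osc(f,\Tcal)\|w_h\|_h$. The part with $\Pi_0f$ is handled by local efficiency: we bound $\|h_T^2\Pi_0f\|$ by $\|(1-\Pi_0)\sigma\|+\osc(f,\Tcal)$ using bubble-function arguments (Verfürth-type) for the residual $\div\div\sigma=f$. Alternatively, if $J$ preserves elementwise integral means of $w_h$, this part vanishes. This step, the construction of $J$ with the required moment preservation or the efficiency argument, is where I expect the real work to lie.
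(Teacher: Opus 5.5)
\textbf{There is a genuine gap: the decisive consistency term does not vanish.} Your skeleton matches the paper's: the comparison with $Q_hu$, the bound $\|\sigma-D_h^2Q_hu\|\lesssim\|(1-\Pi_0)\sigma\|$ from (C3) and (C5), the splitting $\ell(w_h)=(f,w_h-Jw_h)+(\sigma,D^2Jw_h-D_h^2w_h)$, and the final use of (C3) for $D^2_\pw u_h$. The failure is your claim that $(\Pi_0\sigma,D^2Jw_h-D_h^2w_h)$ vanishes ``up to face terms that (C1) kills''. After elementwise integration by parts, (C1) and conformity only kill the average part $\{\Pi_0\sigma\,\nu_F\}_F\cdot\int_F[\nabla Jw_h-\nabla_hw_h]_F$. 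What remains is
\[
\sum_{F\in\Fcal}[\Pi_0\sigma\,\nu_F]_F\cdot\int_F\bigl(\nabla Jw_h-\{\nabla_hw_h\}_F\bigr).
\]
The jump $[\Pi_0\sigma\,\nu_F]_F$ is constant on $F$, so it is tested against the face \emph{mean}, not against the fluctuation as your first attempt says.

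Killing this term requires $J$ to reproduce the full vector $\int_F\{\nabla_hw_h\}_F$. The normal component can be enforced by a corrector; this is the second relation in \eqref{eq:J-orth}. The tangential component cannot be prescribed face by face, because $\int_F\nabla_tJw_h$ is fixed by the trace of $Jw_h$ on $\partial F$ (for $n=2$ it equals $Jw_h(b)-Jw_h(a)$). Summing over the edges of a triangle would therefore force $\int_T\operatorname{rot}\nabla_hw_h=\int_{\partial T}\nabla_hw_h\cdot t=0$, i.e.\ $\Pi_0\operatorname{skw}D_h^2w_h=0$. This holds for planar DKT, but Condition~C does not imply it; note that the estimator $\mu$ contains $\|\operatorname{skw}\sigma_h\|$. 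For example, for $n=2$, adding $c\,h_E^2\partial^3_{t_E}v_h$ with $c\notin\{0,\tfrac18\}$ to the right-hand side of \eqref{def:discrete-gradient-2} keeps (C1)--(C5) but produces nonzero circulation. So the real work lies in this term, not in the $\Pi_0 f$ part. That part is disposed of by the first relation in \eqref{eq:J-orth}, or simply by the definition of $\osc(f,\Tcal)$.

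\textbf{The missing idea} is the product estimate of Lemma~\ref{lem:dis-cons}: the term is not zero, but it is a small jump of $\Pi_0\sigma$ times the small nonconformity of $w_h$.
\begin{itemize}
\item Write $\Pi_0\sigma=D^2_\pw G_hu$.
\item (C1) removes the average part, and the normal-moment orthogonality of $J$ removes the normal--normal jumps.
\item Only the tangential--normal jumps $[\partial_{nt_j}G_hu]_F$ survive. Since $JQ_hu\in H^2_0(\Omega)$ has a continuous gradient vanishing on $\partial\Omega$, they equal $[\partial_{nt_j}(G_hu-JQ_hu)]_F$.
\item A discrete trace inequality then gives
\[
|(\Pi_0\sigma,D^2Jw_h-D_h^2w_h)|\lesssim\|D^2_\pw(G_hu-JQ_hu)\|\,\bigl(\|h^{-1}(\nabla_hw_h-\nabla Jw_h)\|+\|D_h^2w_h-D^2Jw_h\|\bigr).
\]
\item The first factor is $\lesssim\|(1-\Pi_0)\sigma\|$ by \eqref{eq:Galerkin-err} and Lemma~\ref{l:C1qi}. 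The second is $\lesssim\|w_h\|_h$ by Lemma~\ref{l:smoothing}.
\end{itemize}

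\textbf{Two smaller slips.}
\begin{itemize}
\item Your step $\|h^2(1-\Pi_0)f\|\lesssim\osc(f,\Tcal)$ goes the wrong way. Bound $(f,w_h-Jw_h)$ directly by the definition of $\osc$, or state the result with $\|h^2(1-\Pi_0)f\|$.
\item $w_h-Jw_h$ is not controlled by $\|D^2_\pw w_h-D_h^2w_h\|$ alone: for Morley, $\nabla_h=\nabla_\pw$, yet $w_h\neq Jw_h$. The correct bound from Lemma~\ref{l:smoothing} still yields $\lesssim\|w_h\|_h$.
\end{itemize}
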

The oscillations $\osc(f,\Tcal)$ are described in Section~\ref{s:structure},
\Cref{d:osc}
below and are defined for $f\in H^{-1}$ provided $V_h$ is a subspace
of $H^1_0(\Omega)$. Otherwise, they correspond to the usual $L^2$-based
oscillations.

An immediate consequence is an error bound under minimal regularity
assumptions for several known schemes satisfying Condition~C.
First, the error bound applies to the
DKT element and reveals that the $H^3$ regularity assumed in
prior contributions \cite{Braess2007,Bartels2013,Bartels2015}
is not a qualitative limitation to the method. 
Second, Condition~C as a set of structure conditions
allows for a canonical
generalization of the DKT element to three dimensions,
which leads to a simple low-order scheme
not documented so far in the literature.
The application of the error bounds to several other nonconforming
methods is possible and briefly commented on in Section~\ref{s:conclusion}.
Error estimates in weaker norms are provided in \Cref{t:Hs_estimate}
in Section~\ref{sec:pr-a-priori} below,
and an adaptation of the error bound for a singularly perturbed
problem is given as \Cref{thm:a-priori-singular}.

The conditions furthermore allow for a reliable a~posteriori
error bound.

\begin{theorem}[a~posteriori]\label{thm:a-post}
    Suppose (C1)--(C5) and $f\in L^2(\Omega)$, then the discrete solution 
    $u_h$ to \eqref{def:discrete-problem} satisfies
    \begin{align*}
    \|\sigma - \sigma_h\| \lesssim \mu
    \lesssim
    \|\sigma - \sigma_h\|
    + \|(1-\Pi_0)\sigma\|
    + \| h^2(f-\Pi_0 f)\|
    \end{align*}
    for the explicit residual-based a~posteriori error estimator
    $\mu$ defined in \eqref{e:mudef}.
\end{theorem}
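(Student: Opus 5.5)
The plan is to prove reliability through a conforming companion of $u_h$, and efficiency by local (inverse and bubble) arguments. I will assume that $\mu$ in \eqref{e:mudef} collects four kinds of local contributions:
\begin{itemize}
\item the volume residual $\|h_T^2 f\|_T$;
\item the normal jumps $h_F^{1/2}\|[\sigma_h]_F\nu_F\|_F$ of the piecewise constant discrete Hessian;
\item the nonconformity terms $\|D^2_\pw u_h-\sigma_h\|_T$;
\item jump terms measuring how far $u_h$ is from $H^2_0(\Omega)$, such as $h_F^{-1/2}\|[\nabla_\pw u_h]_F\|_F$ and $h_F^{-3/2}\|[u_h]_F\|_F$.
\end{itemize}
Let $J:V_h\to V$ be an averaging (HCT/enrichment-type) operator with $\|D^2_\pw(v_h-Jv_h)\|$ bounded by these jump terms; this is standard on simplices by local averaging of degrees of freedom. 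With $w\coloneqq u-Ju_h\in V$, the triangle inequality gives
\begin{align*}
\|\sigma-\sigma_h\|\le \|D^2 w\|+\|D^2_\pw(Ju_h-u_h)\|+\|D^2_\pw u_h-\sigma_h\|.
\end{align*}
The last two terms are already estimator contributions.

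For $\|D^2 w\|^2$ I use \eqref{def:continuous-problem} with test function $w$, and then \eqref{def:discrete-problem} with test function $Q_h w$ from (C5):
\begin{align*}
\|D^2 w\|^2=(f,w-Q_hw)+(\sigma_h,D^2_hQ_hw-D^2w)+(\sigma_h-D^2Ju_h,D^2w).
\end{align*}
The first term is bounded by $\|h^2f\|\,\|D^2w\|$ by (C5). The third term is bounded by the estimator as above. The second term is the heart of the argument. Set $\psi\coloneqq\nabla_hQ_hw-\nabla w$. Since $\sigma_h$ is piecewise constant, piecewise integration by parts turns $(\sigma_h,D_\pw\psi)$ into $\sum_F\int_F[\sigma_h]_F\nu_F\cdot\psi$, where boundary faces need care. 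By (C1) and the continuity of $\nabla w$, the average of the jump of $\psi$ over each face vanishes. Hence each face term equals $\int_F [\sigma_h]_F\nu_F\cdot(\psi-\text{mean})$, which the trace and Poincaré inequalities bound by $h_F^{1/2}\|[\sigma_h]_F\nu_F\|_F\,\|h_T^{-1/2}\psi\|_{\partial T}$, and the latter is controlled by $\|h_T^{-1}\psi\|_T+\|D_\pw\psi\|_T$.

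It then remains to show $\|h_T^{-1}\psi\|_T+\|D_\pw\psi\|_T\lesssim\|D^2w\|_{\omega_T}$. I split
\begin{align*}
\psi=(\nabla_hQ_hw-\nabla Q_hw)+\nabla(Q_hw-w).
\end{align*}
For the second summand I use (C5) directly. For the first I apply (C4) and then (C3), which reduce it to $\|(1-\Pi_0)D^2Q_hw\|_T$. That quantity is at most $\|D^2(Q_hw-w)\|_T+\|(1-\Pi_0)D^2w\|_T$, which (C5) bounds by $\|D^2w\|_{\omega_T}$. Using finite patch overlap and dividing by $\|D^2w\|$ yields $\|\sigma-\sigma_h\|\lesssim\mu$.

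For efficiency I bound each estimator term locally.
\begin{itemize}
\item $\|D^2_\pw u_h-\sigma_h\|_T$: by (C3) this is at most $\|(1-\Pi_0)\sigma_h\|_T\le\|\sigma-\sigma_h\|_T+\|(1-\Pi_0)\sigma\|_T$.
\item Jumps of $u_h$ and $\nabla_\pw u_h$: since $u\in H^2_0(\Omega)$ has no jumps, these equal jumps of $u_h-u$. I use (C2), (C4) and (C1) to write them through $\nabla_h u_h-\nabla u$ with vanishing face means, and then apply trace and Poincaré estimates. This leads to $\|\sigma-\sigma_h\|_{\omega_F}+\|(1-\Pi_0)\sigma\|_{\omega_F}$.
\item $\|h_T^2f\|_T$ and $h_F^{1/2}\|[\sigma_h]_F\nu_F\|_F$: I use Verfürth's bubble-function technique with a $C^1$ bubble $b_T$ (respectively a face bubble). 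Testing \eqref{def:continuous-problem} with $b_T\Pi_0f$, and noting that $\sigma_h$ is constant on $T$ so that $(\sigma_h,D^2(b_T\Pi_0 f))_T=0$, gives $\|h_T^2\Pi_0f\|_T\lesssim\|\sigma-\sigma_h\|_T$ up to $\|h^2(f-\Pi_0f)\|_T$. The face jump is handled analogously with a face bubble extended quadratically in the normal direction.
\end{itemize}

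The main obstacle is the reliability face term. Condition (C1) controls only face means of $[\nabla_h v_h]$, so the zero-mean argument must be arranged to cancel exactly against the piecewise constant $\sigma_h$, with boundary faces handled by the trace convention. A second delicate point, in the efficiency proof, is avoiding any unavailable control of jumps of $\Pi_0\sigma$. I would handle this by working with $\sigma-\sigma_h$ inside the bubble identities rather than with $\Pi_0\sigma$.
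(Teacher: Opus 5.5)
\textbf{There is a genuine gap.} Your argument is sound when $\sigma_h$ happens to be piecewise constant (e.g.\ Morley), but it rests on that premise, and the premise is false in the setting of the theorem. Here $\sigma_h=D_\pw\nabla_h u_h$ for a general piecewise polynomial field $\nabla_h u_h\in\Theta_h$. Already for the DKT element $\nabla_h u_h|_T\in P_2(T)^n$, so $\sigma_h$ is piecewise affine and, not being a Hessian, in general not symmetric. This is why \eqref{e:mudef} contains $\|\sigma_h-\bar\sigma_h\|$, $\|\operatorname{skw}\sigma_h\|$, $\div_\pw\div_\pw\sigma_h$ and the jumps of $\div_\pw\sigma_h\cdot\nu_F$.

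Consequently your key reliability step fails as written. Piecewise integration by parts actually gives
\[
(\sigma_h,D_\pw\psi)=-(\div_\pw\sigma_h,\psi)
+\sum_{F\not\subset\partial\Omega}\int_F\big([\sigma_h]_F\nu_F\cdot\{\psi\}_F+\{\sigma_h\}_F\nu_F\cdot[\psi]_F\big)
+\sum_{F\subset\partial\Omega}\int_F\sigma_h\nu\cdot\psi .
\]
You drop the volume term. Moreover, (C1) kills the $\{\sigma_h\}_F\nu_F\cdot[\psi]_F$ terms and the boundary terms only if $\sigma_h$ is constant on $F$. Likewise, your efficiency identity $(\sigma_h,D^2(b_T\Pi_0 f))_T=0$ needs $\div\div(\sigma_h|_T)=0$.

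The repair needs an extra estimator contribution. By an inverse estimate, $\|h_T\div\sigma_h\|_T\lesssim\|\sigma_h-\bar\sigma_h\|_T$. Since $[\psi]_F$ (resp.\ $\psi$ on boundary faces) has zero mean, you may replace $\{\sigma_h\}_F$ by $\{\sigma_h\}_F$ minus its face mean; the discrete trace inequality bounds this difference by $h_F^{-1/2}\|\sigma_h-\bar\sigma_h\|_{\omega_F}$. A smaller point: (C1) gives zero means of $[\psi]_F$, not of $\{\psi\}_F$, so your ``$\psi-\text{mean}$'' rewriting is incorrect. It is harmless, because Cauchy--Schwarz plus the trace inequality already suffices there.

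The second gap is that you prove bounds for a guessed estimator rather than for $\mu$ from \eqref{e:mudef}. For reliability this can be bridged, since each of your terms is dominated by $\mu$:
\begin{itemize}
\item $\|D^2_\pw u_h-\sigma_h\|_T\lesssim\|\sigma_h-\bar\sigma_h\|_T$ by (C3);
\item jumps of $u_h$ reduce to jumps of $\nabla_\pw u_h$ by (C2), and these reduce via (C3)--(C4), trace inequalities and a face Poincar\'e inequality based on (C1) to $\|\sigma_h-\bar\sigma_h\|$ plus tangential jumps of $\sigma_h$;
\item $\|h^2 f\|\le\|h^2(f-\div_\pw\div_\pw\sigma_h)\|+\|h^2\div_\pw\div_\pw\sigma_h\|$, with an inverse estimate on the last term.
\end{itemize}
For efficiency this comparison does not help. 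You must bound the terms actually in $\mu$: the skew part, $\sigma_h-\bar\sigma_h$, the full interior jumps $[\sigma_h]_F$ including their tangential parts, the boundary tangential jumps, and $h_F^{3/2}\|[\div_\pw\sigma_h\cdot\nu_F]_F\|_F$. This is routine via the symmetry of $\sigma$, (C3), inverse estimates and your zero-mean trace argument, but it has to be done.

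Once repaired, your route is a genuine alternative to the paper's. The paper splits the error into a nonconforming part and a consistency functional $T_1+T_2+T_3$, handled by two integrations by parts against $\psi-JQ_h\psi$, the orthogonality \eqref{eq:J-orth}, and \Cref{lem:dis-cons}. Testing directly with $u-Ju_h$ and $Q_h(u-Ju_h)$ needs a single integration by parts and uses neither \Cref{lem:dis-cons} nor \eqref{eq:J-orth}. It proves reliability of a leaner estimator that is dominated by $\mu$.
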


We note that the first bound (reliability) is implied by
Condition~C, while the second bound (efficiency) has been
known \cite{Verfuerth2013} and is not a new contribution here.

Error estimates under minimal regularity assumptions were studied
in \cite{Braess2009,Gudi2010,CarstensenPeterseimSchedensack2012}
and more abstractly in \cite{VeeserZanotti2018_I}
for classical nonconforming or discontinuous Galerkin schemes.
The error analysis in this work shows that DKT elements and many
other known nonconforming methods for the biharmonic operator share
structural conditions that are sufficient for quasi-best approximation
of the Hessian by piecewise constants.
We furthermore describe in \Cref{s:stream}
a connection to two-dimensional stable pairs
for the Stokes equations. This gives room for a re-interpretation
of DKT-like elements as discrete stream functions of known discretely
divergence-free functions, rather than an ad-hoc construction for
plate analysis only.

\medskip
Throughout this article, standard notation on Lebesque
and Sobolev spaces is employed
with the $L^2$ inner product $(\cdot,\cdot)_\omega$
over a measurable subset $\omega$ of $\mathbb R^n$
and the $L^2$ norm $\|\cdot\|_\omega$.
If $\omega=\Omega$, the index is dropped.
The space of functions over a set $T$ that are polynomial
of degree at most $k$ is denoted as $P_k(T)$,
and $P_k(\Tcal)$ denotes the space of functions that 
belong to $P_k(T)$ when restricted to any simplex $T$
of the triangulation $\Tcal$.
The $L^2$ projection onto $P_0(\mathcal T)$ is denoted
by $\Pi_0$.
The diameter of a set $\omega$ is denoted by $h_\omega$,
and $h$ denotes the piecewise constant mesh-size function
with respect to a triangulation $\Tcal$ such that
$h|_T=h_T$ for any simplex $T\in\Tcal$; $h_{\max} = \|h\|_{L^\infty(\Omega)}$ 
is the maximal mesh-size.
An inequality $A\leq CB$ with a generic constant $C$ that
may depend on the shapes in the triangulation $\Tcal$
but not on the mesh size, is denoted by $A\lesssim B$,
and we write $A\approx B$ for $A\lesssim B \lesssim A$.
The Frobenius inner product of matrices $M,N$ is denoted
by the colon, $M:N$.

The remaining parts of this paper are organized as follows. 
\Cref{s:structure} discusses several preliminary results implied
by Condition~C, including the construction of smoothing operators.
\Cref{sec:pr-a-priori} is devoted to the proof of \Cref{thm:a-priori}
as well as its consequences such as lower-order estimates and 
extension to singularly perturbed biharmonic operators. The proof 
of \Cref{thm:a-post} is given in \Cref{sec:pr-a-post}. 
The  results are applied to DKT elements in \Cref{s:dkt}. 
\Cref{s:stream} discusses the relation to pressure-robust
discretizations of planar Stokes flow.
Two and three dimensional numerical benchmarks are provided in \Cref{s:num}. Some comments on other classical nonconforming FEM in \Cref{s:conclusion} conclude this note.

\section{Consequences of the structure conditions}
\label{s:structure}

In this section, we derive some direct conclusions
from Condition~C
and prove the existence of a smoothing operator $J$.
This operator allows to quantify consistency and oscillations
of the right-hand side.
Such operators were constructed in more specific situations
by \cite{BrennerSung2005,Gallistl2015,VeeserZanotti2019,Tran2026}, for example.
As a preliminary step, we discuss some basic properties around
(C1)--(C5) on local equivalence of the classical gradient
$\nabla$ and the discrete gradient $\nabla_h$ acting on
discrete functions from $V_h$.
A first consequence of (C3) is the equivalence of the local 
seminorms 
\begin{equation}\label{e:normeq}
\|D^2 \bullet\|_{T} \approx \|D^2_h \bullet\|_{T}
\end{equation}
for discrete functions.
Condition (C3) is satisfied in most reasonable discrete schemes
because of the following.
Suppose that
$\nabla_h v_h|_T$ being affine implies
that $v_h|_T$ is a quadratic polynomial
and $\nabla_h v_h|_T = \nabla v_h|_T$.
Then, (C3) is satisfied.
The hidden constant therein depends on the shape of the simplex.
Under the condition that these constants are invariant under 
scaling and translation, they remain uniformly bounded for families of triangulations involving
finite many shapes, such as the ones created by the newest-vertex bisection.
Another consequence of (C3)--(C4) is the local equivalence of the gradient
and the discrete gradient, namely
\begin{equation}\label{e:normeq_nabla}
\|\nabla \bullet\|_{T} \approx \|\nabla_h \bullet\|_{T},
\end{equation}
which can be proven by combining (C4) with the triangle inequality,
(C3), and an inverse estimate.
We further note that (C3) implies the following local equivalence
\begin{equation}\label{e:ba-loc}
	\|(1 - \Pi_0) D^2 v_h\|_T \approx \|(1 - \Pi_0) D_h^2 v_h\|_T.
\end{equation}
 
 \begin{lemma}[best approximation of $D^2_h \circ Q_h$]\label{lem:ba-dis-Hessian}
 	Suppose (C3) and (C5), then any $v \in V$ satisfies
 	\begin{align*}
 		\|D^2 v - D_h^2 Q_h v\|_T \lesssim \|(1 - \Pi_0) D^2 v\|_{\omega_T}.
 	\end{align*}
 \end{lemma}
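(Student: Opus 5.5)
The plan is to split the error by the triangle inequality, inserting the piecewise Hessian of the quasi-interpolant $Q_h v$:
\begin{align*}
\|D^2 v - D_h^2 Q_h v\|_T \le \|D^2 v - D^2 Q_h v\|_T + \|D^2 Q_h v - D_h^2 Q_h v\|_T .
\end{align*}
Here $D^2 Q_h v$ is understood as the elementwise Hessian on $T$; it is well defined since $Q_h v|_T$ is a polynomial. The first term is the $j=2$ contribution in (C5), and the factor $h_T^{0}$ there is harmless. Hence (C5) bounds it directly by $\|(1-\Pi_0)D^2 v\|_{\omega_T}$.

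For the second term, I apply (C3) with $v_h = Q_h v \in V_h$ and use the first entry of the minimum:
\begin{align*}
\|D^2_h Q_h v - D^2 Q_h v\|_T \lesssim \|(1-\Pi_0) D^2 Q_h v\|_T .
\end{align*}
It remains to compare $(1-\Pi_0)D^2 Q_h v$ with $(1-\Pi_0)D^2 v$. Two facts do this. First, $1-\Pi_0$ is the identity minus an $L^2$-orthogonal projection on $T$, so it is a contraction in $\|\cdot\|_T$. Second, $1-\Pi_0$ is linear. Together they give
\begin{align*}
\|(1-\Pi_0) D^2 Q_h v\|_T \le \|(1-\Pi_0) D^2 v\|_T + \|D^2 (v - Q_h v)\|_T .
\end{align*}
The first summand is at most $\|(1-\Pi_0)D^2 v\|_{\omega_T}$ because $T\subset\omega_T$, and the elementwise projection on $T$ coincides with the restriction of $\Pi_0$. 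The second summand is again controlled by (C5).

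Collecting the three bounds yields the claim. No step is a real obstacle. The only point needing care is to use the first entry $\|(1-\Pi_0)D^2 v_h\|_T$ of the minimum in (C3), rather than the entry involving $D_h^2$, so that (C5) applies directly.
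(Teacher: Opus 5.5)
Your proof is correct and follows the paper's argument: the same triangle-inequality split, (C5) for $\|D^2(v-Q_hv)\|_T$, and the first entry of the minimum in (C3). You also spell out the step the paper leaves implicit, namely bounding $\|(1-\Pi_0)D^2Q_hv\|_T$ through the contraction property of $1-\Pi_0$ and a second application of (C5).
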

 \begin{proof}
 	From the triangle inequality, (C5), and (C3), we deduce that
 	\begin{align*}
 		\|D^2 v - D_h^2 Q_h v\|_T 
 		&\leq \|D^2(v - Q_h v)\|_T + \|D^2 Q_h v - D^2_h Q_h v\|_T\\
 		&\lesssim \|(1-\Pi_0) D^2 v\|_{\omega_T} 
 		+ \|(1 - \Pi_0) D^2 Q_h v\|_T 
 		\lesssim \|(1 - \Pi_0) D^2 v\|_{\omega_T}.
 	\end{align*}
 \end{proof}
Let $\{\cdot\}_F$ denote the average of the values from the
simplices adjacent to $F \in \Fcal$ (for boundary faces it denotes the trace)
and $\nu_F$ is a given normal unit vector of $F$ (for boundary faces it coincides with the outer normal unit vector $\nu$ of $\Omega$).
\begin{lemma}[existence of smoothing]
\label{l:smoothing}
Suppose (C1)--(C4), then there exists a linear bounded operator
$J : V_h \to V$ such that any $v_h \in V_h$ satisfies
\begin{align*}
	\sum_{j=0}^2 \|h_T^{j-2}D^j(v_h - Jv_h)\|_T
	\lesssim 
         \min_{\psi \in V} \|D_\pw^2 (\psi - v_h)\|_{\omega_T} +
	\|(1 - \Pi_0) D^2_\pw v_h\|_{\omega_T},
\end{align*}
as well as
\begin{align}\label{eq:J-orth}
 \int_T (J v_h - v_h) p = 0 
   \quad\text{and}\quad
 \int_F (\nabla J v_h - \{\nabla_h v_h\}_F) \cdot \nu_F = 0
\end{align}
for all $T\in\Tcal$, all $F\in \Fcal$, and any piecewise
quadratic function $p$ with respect to $\Tcal$.
\end{lemma}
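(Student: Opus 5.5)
The construction will be a two-stage one: an averaging operator into a conforming $H^2$ finite element space, followed by a local bubble correction that enforces the orthogonality relations \eqref{eq:J-orth}. The stability estimates will be reduced to jump terms, which (C1)--(C4) control.

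\emph{Step 1 (conforming companion).} Let $V_c\subset H^2_0(\Omega)$ be a conforming $C^1$ space: HCT in two dimensions, or a suitable $C^1$ macro-element / high-degree Zhang-type space in three dimensions. Define $E v_h\in V_c$ by averaging the nodal degrees of freedom (values, gradients, Hessians) of the piecewise polynomial $v_h$ over the simplices sharing a node, and set boundary degrees of freedom to zero. Standard arguments bound $\sum_j\|h_T^{j-2}D^j(v_h-Ev_h)\|_T$ by jump terms on faces in $\omega_T$:
\[
h_F^{-3/2}\|[v_h]\|_F+h_F^{-1/2}\|[\nabla_\pw v_h]\|_F .
\]
I then reduce these jumps to the right-hand side. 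By (C2) and a trace/inverse estimate, $h_F^{-3/2}\|[v_h]\|_F\lesssim h_F^{-1/2}\|[\nabla_\pw v_h]\|_F$. Next I split
\[
[\nabla_\pw v_h]=[\nabla_\pw v_h-\nabla_h v_h]+[\nabla_h v_h].
\]
The first part is bounded by (C4) and (C3) by $\|(1-\Pi_0)D^2_\pw v_h\|_{\omega_F}$. For the second part, (C1) gives $[\nabla_h v_h]$ mean zero on $F$, so Poincaré on $F$ bounds it by $h_F$ times the tangential derivative of the jump. Adding back any $\psi\in V$, which has no jumps, yields $\|D^2_\pw(v_h-\psi)\|_{\omega_F}$. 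Here I use \eqref{e:normeq} and (C3) to move between $D_h^2$ and $D^2_\pw$, and a standard trace inequality for the face jump of the piecewise polynomial $\nabla_h v_h-\nabla\psi$ after subtracting a suitable piecewise affine approximation. This jump-control step is where I expect the main obstacle. Bounding the jump of $\nabla_h v_h-\nabla\psi$ through $\|D_\pw^2(\psi-v_h)\|$ needs care, because $\psi$ is not discrete; I would pass through $\|D_h^2 v_h-D^2\psi\|\le\|D^2_\pw(v_h-\psi)\|+\|D_h^2v_h-D^2_\pw v_h\|$ and handle $\psi$ by a local Poincaré-type argument on the patch.

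\emph{Step 2 (bubble correction).} Choose local bubble functions in $H^2_0(T)$: for each $T$, a basis dual to $P_2(T)$ under the $L^2(T)$ pairing, scaled so that $\|D^j b\|\approx h_T^{-j}$ relative to the dual normalization. For each face $F$, choose a $C^1$ face bubble $b_F\in H^2_0(\omega_F)$ with $\int_F\partial_{\nu_F}b_F=1$, $b_F=0$ on $F$, and vanishing integrals against $P_2$ on the adjacent simplices. The face bubbles can be built as $C^1$ macro functions on $\omega_F$, with element bubbles added to kill their moments. Set
\[
Jv_h=Ev_h+\sum_F\alpha_F b_F+\sum_T\sum_k\beta_{T,k}b_{T,k},
\]
with the coefficients chosen to satisfy \eqref{eq:J-orth}. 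Scaling arguments bound the coefficients by
\[
h_F^{-1/2}\|\nabla Ev_h-\{\nabla_h v_h\}\|_F\quad\text{and}\quad h_T^{-n/2}\|Ev_h-v_h\|_T .
\]
The face term splits into $\nabla(E v_h-v_h)$ plus $\nabla_\pw v_h-\nabla_h v_h$, which are controlled by Step 1 and (C4) respectively, together with trace inequalities. So the correction inherits the same bound.

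Linearity and boundedness of $J$ are clear from the construction. The boundary faces are handled identically, using $\nu_F=\nu$ and vanishing boundary degrees of freedom.
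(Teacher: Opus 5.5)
\textbf{There is a genuine gap in Step 1.} Your overall two-stage architecture is the paper's: averaging into a $C^1$ space, then a bubble correction that enforces \eqref{eq:J-orth}. Step 2 is also fine. The failure is in which degrees of freedom you average.

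You average \emph{Hessians}, which any Argyris- or Zhang-type space forces you to do. Yet you claim the standard averaging bound involves only $[v_h]$ and $[\nabla_\pw v_h]$. With second-order degrees of freedom, the averaging estimate picks up extra terms $h_F^{1/2}\|[D^2_\pw v_h]_F\|_F$. Their piecewise-constant normal--normal part is not controlled by the right-hand side.

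Concretely, take a piecewise quadratic $v_h\in V_h\cap V$. The right-hand side vanishes (choose $\psi=v_h$), so the lemma forces $Jv_h=v_h$. But averaging vertex Hessians changes $v_h$ wherever $D^2_\pw v_h$ jumps at a vertex. Such $C^1$ piecewise quadratics do lie in, e.g., the DKT space: the Zwart--Powell quadratic box spline on a criss-cross mesh is one. The moment-fixing bubble correction cannot undo this in general.

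This is exactly why the paper uses HCT ($n=2$) and Worsey--Farin ($n=3$) macro-elements, whose degrees of freedom involve only values and first derivatives. High-degree Zhang-type elements appear only in the corrector, whose degrees of freedom are prescribed through face and element moments rather than by averaging. Restricting $V_c$ to such first-order macro-elements repairs Step 1.

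\textbf{The jump-control step you flagged does not work in the order you wrote it.} After Poincar\'e on $F$ you need a face trace of the tangential derivative of $\nabla\psi$, i.e.\ of $D^2\psi$. This does not exist for $\psi\in H^2_0(\Omega)$. Also, $\nabla_h v_h-\nabla\psi$ is not piecewise polynomial, so no discrete trace inequality applies. The paper closes this step with bubble-function techniques.

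Your hint can be made into a valid and more elementary alternative if you drop the Poincar\'e step on $F$:
let $\Phi\coloneqq\nabla_h v_h-\nabla\psi$, which is $H^1$ on each simplex and satisfies $[\Phi]_F=[\nabla_h v_h]_F$, and let $c$ be its elementwise mean. By (C1), $|F|\,[c]_F=-\int_F[\Phi-c]_F$. Hence $\|[\nabla_h v_h]_F\|_F\le 2\|[\Phi-c]_F\|_F\lesssim h_F^{1/2}\|D_h^2v_h-D^2\psi\|_{\omega_F}$, by the continuous trace inequality and elementwise Poincar\'e. The triangle inequality together with (C3) then finishes the estimate.

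\textbf{Boundary faces cannot be ``handled identically''.} No $H^2_0(\Omega)$ bubble has a nonzero normal derivative on $\partial\Omega$. None is needed, however. On a boundary face $\nabla Jv_h=0$, and (C1) gives $\int_F\nabla_h v_h=0$, so the second relation in \eqref{eq:J-orth} holds automatically there.
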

\begin{proof}
The design of $J$ departs from standard averaging techniques of 
the degrees of freedom of (higher-order) $C^1$ finite element functions on 
Hsieh--Clough--Tocher splits $\widehat{\Tcal}$ if $n=2$
or Worsey--Farin splits $\widehat{\Tcal}$ if $n=3$.
Since the degrees of freedom only depend on the evaluation of a function and
its first derivatives \cite{HCT,GuzmanLischkeNeilan2022}, there exists, for
any given $v_h \in V_h$, a conforming finite element function 
$v_c \in V$, piecewise polynomial with respect to $\hat{\Tcal}$ of 
sufficiently high polynomial degree,
satisfying the local approximation property
\begin{align*}
	h_T^{-4}\|v_h - v_c\|_T^2 \lesssim \sum_{F \in \mathcal{F}, F \cap T \neq \emptyset} (h_F^{-3}\|[v_h]\|_F^2 + h_F^{-1}\|[\nabla_\pw v_h \cdot \nu_F]_F\|_F^2)
\end{align*}
for any $T \in \Tcal$. This is bounded by gradient jumps due to (C2),
\begin{align*}
	h_T^{-4}\|v_h - v_c\|_T^2
	\lesssim \sum_{F \in \mathcal{F}, F \cap T \neq \emptyset}
	h_F^{-1}\|[\nabla_\pw v_h]_F\|_F^2.
\end{align*}
The triangle and trace inequality, (C4), and (C3) imply
\begin{align}\label{ineq:v_h-v_c}
\begin{aligned}
	h_T^{-4}\|v_h - v_c\|_T^2 &\lesssim \sum_{F \in \mathcal{F}, F \cap T \neq \emptyset}
	h_F^{-1}\|[\nabla_h v_h]_F\|_F^2 + \|(1-\Pi_0)D^2_\pw v_h\|^2_{\omega_T}\\
	&\lesssim \min_{\psi \in V} \|D_\pw^2 (\psi - v_h)\|_{\omega_T}^2 + \|(1-\Pi_0)D^2_\pw v_h\|^2_{\omega_T},
\end{aligned}
\end{align}
where we utilize
(C1), standard bubble function techniques
\cite{Verfuerth2013,HuShi2009}, and \eqref{e:ba-loc} in the last step.
The degrees of freedom of conforming finite elements are known from 
\cite{Zenisek1973,Zhang2009} and can be written in integral form 
\cite{LiangTran2026}.
By prescribing these degrees of freedom
as in \cite[Section 2.3]{Tran2026}, we find a piecewise polynomial function
$w_c \in V$ of degree $8$ if $n=2$ or $10$ if 
$n=3$ with respect to $\Tcal$
such that
\begin{align*}
	\int_F \nabla  J w_c\cdot\nu_F
    &= \int_F \{\nabla_h v_h - \nabla v_c\}_F \cdot \nu_F
  \quad\text{and}\quad
	\int_T w_c q  = \int_T (v_h - v_c) q
\end{align*}
for any $F \in \Fcal$,
any $T\in \Tcal$, and any piecewise quadratic function $q \in P_2(\Tcal)$, and
\begin{align*}
	h_{T}^{-4}\|w_c\|^2_{T}
       \lesssim h_{T}^{-4}\|v_h - v_c\|^2_{T} 
      + \sum_{F \in \Fcal,F\subset \partial T} h_F^{-1}\|\{\nabla_h v_h - \nabla v_c\} 
                                                    \cdot \nu_F\|_{F}^2. 
\end{align*}
The triangle and trace inequalities as well as
inverse estimates thus imply
\begin{align}\label{ineq:corrector-bound}
	h_{T}^{-4}\|w_c\|^2_T \lesssim h_T^{-4}\|v_h - v_c\|^2_T + h_T^{-2}\|\nabla_h v_h - \nabla v_c\|^2_{\omega_T}.
\end{align}
The function $J v_h \coloneqq v_c + w_c \in V$ satisfies \eqref{eq:J-orth}.
From \eqref{ineq:v_h-v_c}--\eqref{ineq:corrector-bound}, 
the equivalence \eqref{e:ba-loc}, and inverse estimates, we infer the asserted error bound.
The continuity of $J$ follows from this and (C3) and (C4), namely,
\begin{align}\label{ineq:appr-smoothing}
 \begin{aligned}
	h_T^{-1}\|\nabla_h v_h &- \nabla J v_h\|_T + \|D^2_h v_h - D^2 J v_h\|_T\\
	&\lesssim \min_{\psi \in V} \|D_\pw^2 (\psi - v_h)\|_{\omega_T} +
	\|(1 - \Pi_0) D^2_\pw v_h\|_{\omega_T}.
 \end{aligned}
\end{align}
\end{proof}
As a direct consequence of the foregoing lemma and (C3), we note that
the map $J \circ Q_h$ 
defines a quasi-interpolation into $C^1$ conforming finite 
elements with local approximation properties.

\begin{lemma}[$C^1$ quasi-interpolation] 
\label{l:C1qi} 
Under conditions (C1)--(C5),
\begin{align*}
	\sum_{j = 0}^2 \|h^{j-2}D^j(v - JQ_h v)\| \lesssim \|(1 - \Pi_0) D^2 v\| \quad\text{for any } v \in V.
\end{align*}
\end{lemma}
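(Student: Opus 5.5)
Fix $v\in V$ and a simplex $T\in\Tcal$. We split the error with $v_h\coloneqq Q_h v\in V_h$ as
\begin{align*}
	v - JQ_h v = (v - Q_h v) + (v_h - J v_h),
\end{align*}
so that for $j=0,1,2$
\begin{align*}
	\|h_T^{j-2}D^j(v - JQ_h v)\|_T \le \|h_T^{j-2}D^j(v - Q_h v)\|_T + \|h_T^{j-2}D^j(v_h - J v_h)\|_T .
\end{align*}
The first term is controlled directly by (C5) through $\|(1-\Pi_0)D^2 v\|_{\omega_T}$. The second term is controlled by \Cref{l:smoothing} through
\begin{align*}
	\min_{\psi\in V}\|D^2_\pw(\psi - v_h)\|_{\omega_T} + \|(1-\Pi_0)D^2_\pw v_h\|_{\omega_T}.
\end{align*}

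Both contributions of the smoothing bound are treated with the choice $\psi = v$. For each $K\subset\omega_T$ we have $\|D^2_\pw(v - Q_h v)\|_K \lesssim \|(1-\Pi_0)D^2 v\|_{\omega_K}$ by (C5). For the oscillation term, we use that $\Pi_0$ is the best approximation by constants, $\|(1-\Pi_0)D^2 Q_h v\|_K \le \|D^2 Q_h v - \Pi_0 D^2 v\|_K$, and the triangle inequality gives
\begin{align*}
	\|D^2 Q_h v - \Pi_0 D^2 v\|_K \le \|D^2(Q_h v - v)\|_K + \|(1-\Pi_0)D^2 v\|_K .
\end{align*}
Applying (C5) once more bounds this by $\|(1-\Pi_0)D^2 v\|_{\omega_K}$. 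Hence the second term on $T$ is bounded by $\|(1-\Pi_0)D^2 v\|$ over the second-layer patch $\bigcup_{K\subset\omega_T}\omega_K$.

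Next we square the local bounds and sum over $T\in\Tcal$. Shape regularity guarantees that every simplex belongs to only a uniformly bounded number of (second-layer) patches, and on each patch $h_T\approx h_K$, so the local $h_T$ weights may be exchanged for the global mesh-size function $h$. Summing the overlapping patch contributions with this finite-overlap constant yields the global estimate $\|(1-\Pi_0)D^2 v\|$.

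This argument has no real obstacle. The only care needed is the patch bookkeeping: the bounds pass through two layers of patches, and we must check that the overlap count and the comparability of $h$ across these patches stay uniform. Both follow from shape regularity.
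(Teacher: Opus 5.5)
Your proof is correct and follows essentially the same route as the paper. You split via $Q_h v$, bound the first part by (C5) and the second by \Cref{l:smoothing} with $\psi = v$, control $\|(1-\Pi_0)D^2_\pw Q_h v\|_{\omega_T}$ by the triangle inequality and (C5) over the second-order patch, and sum using finite overlap; your explicit best-approximation step simply spells out what the paper compresses into ``(C5) and a triangle inequality''.
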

\begin{proof}
The triangle inequality
\begin{align*}
	\|D^j(v - JQ_h v)\|_T 
	\leq \|D^j(v - Q_h v)\|_T + \|D^j(Q_h v - J Q_h v)\|_T
\end{align*}
followed by an application of \Cref{l:smoothing} and (C5)
shows
\begin{align*}
	\sum_{j = 0}^2 h_T^{j-2}\|D^j(v - JQ_h v)\|_T 
    \lesssim \|(1 - \Pi_0) D^2 v\|_{\omega(\omega_T)}
      + \|(1 - \Pi_0) D^2_\pw Q_h v\|_{\omega_T}.
\end{align*}
Here, $\omega(\omega_T)$ is the second-order patch of $T$,
that is, the domain consisting of the elements
inside or surrounding the closure of the patch $\omega_T$.
The final term is controlled by 
$\|(1 - \Pi_0) D^2 v\|_{\omega(\omega_T)}$ from (C5) and a triangle inequality, and the claim ensues.
\end{proof}

The following result is the main ingredient for the error analysis.
\begin{lemma}[consistency with lowest-order test functions]\label{lem:dis-cons}
Suppose (C1)--(C4), then any $v_h,w_h \in V_h$ and any piecewise quadratic
function $p$ satisfy
\begin{align*}
	&\big|(D_\pw^2 p, D_h^2 v_h - D^2 J v_h)_{\Omega}\big|\\
	&\qquad\lesssim 
	\|D^2_\pw (p - J w_h)\| \Big(\min_{\psi \in V} \|D_\pw^2 (\psi - v_h)\| +
	\|(1 - \Pi_0) D^2_\pw v_h\|\Big).
\end{align*}
\end{lemma}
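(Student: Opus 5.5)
The plan is to exploit that $M \coloneqq D^2_\pw p$ is piecewise constant, so that elementwise integration by parts turns the volume term into face terms. Those face terms can then be controlled through (C1), the orthogonality \eqref{eq:J-orth} of $J$, and the approximation bound \eqref{ineq:appr-smoothing}.

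\emph{Step 1: reduction to faces.} On each $T$, $D^2_h v_h = D_\pw \nabla_h v_h$ and $M|_T$ is constant. Hence
\begin{align*}
(M, D_h^2 v_h - D^2 J v_h)_T = \int_{\partial T} M\nu_T \cdot (\nabla_h v_h - \nabla J v_h).
\end{align*}
Summing over $T$ and using that $\nabla J v_h$ is continuous and vanishes on $\partial\Omega$ gives
\begin{align*}
\sum_{F\in\Fcal} \int_F [M\nu_F\cdot(\nabla_h v_h - \nabla Jv_h)]_F
= \sum_{F} \int_F [M]_F\nu_F\cdot\{\nabla_h v_h - \nabla Jv_h\}_F + \{M\}_F\nu_F\cdot[\nabla_h v_h]_F .
\end{align*}
This uses the product rule for jumps, and boundary faces are treated with the trace convention. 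The second summand vanishes by (C1), because $\{M\}_F\nu_F$ is constant on $F$.

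\emph{Step 2: kill the normal–normal part.} Decompose $[M]_F\nu_F = (\nu_F\cdot[M]_F\nu_F)\nu_F + \tau_F$, where $\tau_F \coloneqq (1-\nu_F\nu_F^\top)[M]_F\nu_F$ is tangential to $F$. The normal part is a constant times $\int_F \nu_F\cdot\{\nabla_h v_h-\nabla Jv_h\}_F$, which is zero by \eqref{eq:J-orth}. This step is essential: a purely normal–normal jump, as for $p=(x_1)_+^2\in H^2$, cannot be controlled by $\|D^2_\pw(p-Jw_h)\|$.

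\emph{Step 3: estimate the tangential remainder.} By Cauchy–Schwarz, the remaining sum is at most
\begin{align*}
\Big(\sum_F h_F\|\tau_F\|_F^2\Big)^{1/2}\Big(\sum_F h_F^{-1}\|\{\nabla_h v_h-\nabla Jv_h\}_F\|_F^2\Big)^{1/2}.
\end{align*}
For the second factor, apply the discrete trace inequality on each adjacent simplex to $\nabla_h v_h - \nabla J v_h$, which is piecewise polynomial on the refined split. This bounds the factor by $\sum_T\big(h_T^{-1}\|\nabla_h v_h - \nabla Jv_h\|_T + \|D^2_h v_h - D^2Jv_h\|_T\big)$ up to squares, and \eqref{ineq:appr-smoothing} together with finite overlap of patches yields $\min_{\psi\in V}\|D^2_\pw(\psi-v_h)\| + \|(1-\Pi_0)D^2_\pw v_h\|$.

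The first factor is the main obstacle. I would show $h_F^{1/2}\|\tau_F\|_F\lesssim\|D^2_\pw(p-\phi)\|_{\omega_F}$ for any $\phi\in H^2_0(\Omega)$, and then take $\phi=Jw_h$. The reason is that $\tau_F$ collects the tangential–normal entries of the jump of $D^2_\pw p$, i.e.\ the jump of the tangential derivatives of $\nabla_\pw p$, and for $\phi\in H^2$ the tangential derivatives of $\nabla\phi$ have no jump across $F$. I would make this rigorous with the bubble-function technique of \cite{Verfuerth2013,HuShi2009} already used in the proof of \Cref{l:smoothing}. Concretely, test with a vector field $\beta = b_F\,\tau_F$ built from the face bubble $b_F$, supported in $\omega_F$, and use the elementwise identity
\begin{align*}
\int_{\omega_F} (M - D^2\phi):\operatorname{Curl}\text{-type test field}
\end{align*}
chosen so that its face integral reproduces $\int_F b_F|\tau_F|^2$. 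Normal derivatives of $\phi$ are not seen by such a test field, so only the tangential information, which is continuous for $\phi$, enters. Scaling then gives $h_F\|\tau_F\|_F^2\lesssim \|M - D^2\phi\|_{\omega_F}\,h_F^{1/2}\|\tau_F\|_F$. Summing over $F$ with finite overlap gives $\|D^2_\pw(p-Jw_h)\|$ and completes the bound.
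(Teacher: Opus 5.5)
Your proof is correct, but it bounds the tangential--normal jump $\tau_F$ by a different mechanism than the paper. Your Steps 1--2 are exactly the paper's argument. So is your bound for $\sum_F h_F^{-1}\|\{\nabla_h v_h-\nabla Jv_h\}_F\|_F^2$ via the trace inequality and \eqref{ineq:appr-smoothing}.

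The paper handles $\tau_F$ in one line, using only that $Jw_h$ is a $C^1$ piecewise polynomial on the HCT/Worsey--Farin split. Tangential derivatives of $\nabla Jw_h$ do not jump across interior faces and vanish on boundary faces. Hence $[\partial_{\nu t_j}p]_F=[\partial_{\nu t_j}(p-Jw_h)]_F$ pointwise on $F$. The discrete trace inequality for the piecewise polynomial $D^2_\pw(p-Jw_h)$ then gives $h_F^{1/2}\|\tau_F\|_F\lesssim\|D^2_\pw(p-Jw_h)\|_{\omega_F}$.

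You instead prove this local bound for arbitrary $\phi\in H^2_0(\Omega)$, by a bubble-function duality argument that rests on $D^2\phi$ being row-wise curl-free. That is valid and slightly stronger. It controls $\tau_F$ by $\min_{\phi\in V}\|D^2_\pw(p-\phi)\|_{\omega_F}$ and never uses the polynomial structure of $Jw_h$. For example, in the proof of \Cref{thm:a-priori} one could take $\phi=u$ directly instead of passing through $JQ_hu$ and \Cref{l:C1qi}.

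The price is an extra construction, which you only sketch, and its test vector needs care. For $X=\nu_F\otimes\operatorname{curl}\beta$ with $\beta\in H^1_0(\omega_F)^3$, one has $\int_{\omega_F}D^2\phi:X=0$ and $\int_{\omega_F}D^2_\pw p:X=\pm\int_F\beta\cdot(\tau_F\times\nu_F)$. So the bubble must multiply the rotated vector $\tau_F\times\nu_F$, not $\tau_F$ itself, for which this face integral vanishes. With $\beta=b_F\,(\tau_F\times\nu_F)$ the face integral is $\int_F b_F|\tau_F|^2$, and scaling gives your bound. In two dimensions, take the scalar $\beta=b_F\,(\tau_F\cdot t_F)$ and $X=\nu_F\otimes\operatorname{Curl}\beta$ instead.
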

\begin{proof}
	Let $v_h \in V_h$ and a piecewise quadratic $p$
        be given.
	Since the jumps of discrete gradients $\nabla_h v_h$
	have zero averages over the faces due to (C1),
	a piecewise integration by parts shows
	\begin{align}\label{eq:p2-cons-proof}
		(D_\pw^2 p, D_h^2 v_h - D^2 J v_h)_\Omega
	 	= \sum_{F \in \Fcal} \int_F \{\nabla_h v_h - \nabla J v_h\}_F \cdot [D_\pw^2 p \nu_F]_F
        .
	\end{align}
Given any face $F \in \Fcal$, the orthogonality
relation \eqref{eq:J-orth} proves that the integral of
$
  (\nabla_h v_h - \nabla J v_h) \cdot \nu_F
$
over $F$ against any constant vanishes.
Choosing the constant $ \nu_F \cdot [D_\pw^2 p \nu_F]_F$,
we infer
\begin{align*}
\int_F \{\nabla_h v_h - \nabla J v_h\}_F \cdot [D_\pw^2 p \nu_F]_F 
= \int_F \sum_{j = 1}^{n-1} (
   \{\nabla_h v_h - \nabla J v_h\}_F \cdot t_j) [\partial_{nt_j} p]_F 
\end{align*}
with $n-1$ orthonormal vectors $t_1, \dots, t_{n-1}$ spanning the 
hyperplane $F$. Along tangential directions, gradients of the $C^1$ conforming 
function $J w_h$ do not jump and are zero on boundary faces. Hence,
\begin{align*}
  \int_F \{\nabla_h v_h - \nabla J v_h\}_F \cdot [D_\pw^2 p \nu_F]_F
  \lesssim 
  \|\{\nabla_h v_h - \nabla J v_h\}_F\|_{F}
   \|[\partial_{nt} (p - J w_h)]_F\|_{F} 
   .
\end{align*}
The sum of this over all $F \in \Fcal$ and the discrete trace inequality lead to
\begin{align}\label{ineq:p2-cons-pr}
 \begin{aligned}
   &\sum_{F \in \Fcal} \int_F \{\nabla_h v_h - \nabla J v_h\}_F
                \cdot [D_\pw^2 p \nu_F]_F \\
    &\qquad \lesssim 
       \left(
           \|h^{-1}(\nabla_h v_h - \nabla J v_h)\| 
          + \|D_h^2 v_h - D^2 J v_h\|
		\right)
		\|D^2_\pw(p - J w_h)\|.
 \end{aligned}
\end{align}
The combination of \eqref{ineq:appr-smoothing}--\eqref{ineq:p2-cons-pr}
with the finite overlap of patches implied by the shape regularity
concludes the proof.
\end{proof}
\begin{remark}[gradient consistency]\label{rem:mod-smoothing}
	If we additionally assume 
	continuity $V_h \subset H^1_0(\Omega)$,
	which implies (C2),
	then the operator $J$ from \Cref{l:smoothing} can be chosen to
	satisfy
	\begin{align}\label{e:gradcons_singpert}
		(\nabla_\pw (v_h - J v_h), \Phi)_\Omega = 0.
	\end{align}
	for any $v_h \in V_h$ and any piecewise constant vector field $\Phi$.
	To this end, we further enforce the property	
	\begin{align}\label{eq:J-orth-singular}
		\int_F (J v_h - \{v_h\}_F) = 0
		\quad\text{for all } F\in \Fcal
	\end{align}
	in \Cref{l:smoothing} by face bubble functions on interior faces.
	On the boundary, the operations $\{\cdot\}_F$ and $[\cdot]_F$ coincide and
	\eqref{eq:J-orth-singular} is directly implied by the
	assumed inclusion in $H^1_0(\Omega)$.
	A piecewise integration by parts proves that 
	$(\nabla_\pw (v_h - J v_h), \Phi)_\Omega$ is equal to
	\begin{align*}
		\sum_{F \in \Fcal} \int_F [v_h]_F \{\Phi\}_F \cdot \nu_F 
		+ \sum_{F \in \Fcal} \int_F (\{v_h\}_F - J v_h) [\Phi]_F \cdot \nu_F.
	\end{align*}
	This vanishes by the assumed $H^1_0$ property
	and \eqref{eq:J-orth-singular}, implying \eqref{e:gradcons_singpert}.
	Notice that continuity of lowest-order moments
	is sufficient for \eqref{e:gradcons_singpert}, but pointwise
	continuity will be utilized in the discussion on singular perturbed problems
	in \Cref{thm:a-priori-singular} below.
\end{remark}

Based on the operator $J$, we define the data oscillation
$\osc(f,\Tcal)$ of $f$ as follows.

\begin{definition}[oscillation]
\label{d:osc}
We define
\begin{align*}
\osc(f,\Tcal)\coloneqq
 \sup_{w_h \in V_h, \|w_h\|_{h} = 1} \int_\Omega f (w_h - J w_h) ,
\end{align*}
where in the case of $f\in H^{-1}(\Omega)$ and
$V_h\subset H^1_0(\Omega)$, the integral on the right-hand side
is read as the duality pairing.
\end{definition}

The following result shows that the classical data
oscillation provides an upper bound of the quantity
defined in \Cref{d:osc}.

\begin{lemma}[oscillation]\label{lem:osc}
Under conditions (C1)--(C4), the oscillation satisfies the 
following bounds. If $f\in L^2(\Omega)$, then
\begin{align*}
\osc(f,\Tcal)\lesssim \|h^2(1 - \Pi_0) f\|.
\end{align*}
If $f \in H^{-1}(\Omega)$ and $V_h \subset H^1_0(\Omega)$, then
\begin{align*}
    \osc(f,\Tcal)\lesssim 
          \Big(\sum_{z \in \mathcal{V}} 
                h_{\omega_z}^2\|f\|_{H^{-1}(\omega_z)}^2\Big)^{1/2}
\end{align*}
with $\mathcal V$ being the set of vertices and $\omega_z$ the
vertex patch of a vertex $z \in \mathcal{V}$
with diameter $h_{\omega_z}$.
\end{lemma}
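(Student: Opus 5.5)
We bound $\int_\Omega f(w_h - Jw_h)$ for a fixed $w_h\in V_h$ with $\|w_h\|_h=1$ and take the supremum at the end. The tools are the orthogonality \eqref{eq:J-orth}, the local approximation bound of \Cref{l:smoothing}, and the norm equivalence \eqref{e:normeq}.

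\emph{Case $f\in L^2(\Omega)$.} By the first identity in \eqref{eq:J-orth}, $w_h-Jw_h$ is $L^2$-orthogonal to piecewise constants (indeed to $P_2(\Tcal)$), so
\begin{align*}
\int_\Omega f(w_h-Jw_h)=\int_\Omega (1-\Pi_0)f\,(w_h-Jw_h)\le \|h^2(1-\Pi_0)f\|\,\|h^{-2}(w_h-Jw_h)\|.
\end{align*}
\Cref{l:smoothing} with $j=0$, summed over $T$ using the finite overlap of the patches, gives
\begin{align*}
\|h^{-2}(w_h-Jw_h)\|\lesssim \min_{\psi\in V}\|D^2_\pw(\psi-w_h)\|+\|(1-\Pi_0)D^2_\pw w_h\|.
\end{align*}
Choosing $\psi=0$ and using $\|(1-\Pi_0)D^2_\pw w_h\|\le\|D^2_\pw w_h\|$, the right-hand side is bounded by $\|D^2_\pw w_h\|$. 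By \eqref{e:normeq} this is $\approx\|D_h^2w_h\|=\|w_h\|_h=1$. This gives the first bound.

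\emph{Case $f\in H^{-1}(\Omega)$ and $V_h\subset H^1_0(\Omega)$.} Here $e\coloneqq w_h-Jw_h\in H^1_0(\Omega)$. Let $(\varphi_z)_{z\in\mathcal V}$ be the $P_1$ hat functions, which form a partition of unity, and split $e=\sum_z \varphi_z e$. Each $\varphi_z e$ lies in $H^1_0(\omega_z)$; this holds for boundary vertices as well, since $e$ vanishes on $\partial\Omega$. Hence
\begin{align*}
\langle f,e\rangle\le\sum_z\|f\|_{H^{-1}(\omega_z)}\,\|\nabla(\varphi_z e)\|_{\omega_z}.
\end{align*}
Since $|\nabla\varphi_z|\lesssim h_{\omega_z}^{-1}$, we have $\|\nabla(\varphi_z e)\|_{\omega_z}\lesssim \|\nabla e\|_{\omega_z}+h_{\omega_z}^{-1}\|e\|_{\omega_z}$. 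By \Cref{l:smoothing} with $j=0,1$ and shape regularity, this quantity is $\lesssim h_{\omega_z}$ times the local $H^2$-type bound on an enlarged patch. A Cauchy--Schwarz inequality over the vertices, together with finite overlap, then yields
\begin{align*}
\langle f,e\rangle\lesssim\Big(\sum_z h_{\omega_z}^2\|f\|^2_{H^{-1}(\omega_z)}\Big)^{1/2}\|D^2_\pw w_h\|,
\end{align*}
and again $\|D^2_\pw w_h\|\lesssim\|w_h\|_h=1$ by \eqref{e:normeq}.

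The main obstacle is the localisation in the second case. We have to make sure that $\varphi_z e$ is an admissible test function in $H^1_0(\omega_z)$, which requires conformity of both $w_h$ and $Jw_h$, and that the patch enlargement coming from \Cref{l:smoothing} still overlaps only finitely often. The first case is routine once the orthogonality \eqref{eq:J-orth} has been used.
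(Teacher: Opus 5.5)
Your proposal is correct and follows essentially the same route as the paper. In the $L^2$ case you insert $(1-\Pi_0)f$ via the orthogonality \eqref{eq:J-orth} and apply the $j=0$ bound of \Cref{l:smoothing}. In the $H^{-1}$ case you localize with the hat-function partition of unity, then use the product rule, \Cref{l:smoothing}, Cauchy--Schwarz over the vertices and \eqref{e:normeq}, just as the paper does, except that you take the gradient term directly from the $j=1$ part of \Cref{l:smoothing} instead of via an inverse estimate.
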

\begin{proof}
	For $f\in L^2(\Omega)$, the upper bound by the $L^2$ oscillation follows
	from the bound of \Cref{l:smoothing}, the 
	orthogonality \eqref{eq:J-orth},
	and the continuity of $J$.
	If $f \in H^{-1}(\Omega)$, 
	the bound  by a localized version of $h_{\max}\|f\|_{H^{-1}(\Omega)}$ 
	is as follows.
	For any vertex $z \in \mathcal{V}$, let
	$\Lambda_z$ denote the piecewise affine and globally continuous
	nodal hat function associated with $z$.
	Then
 \begin{align*}
	\langle f, w_h - J w_h\rangle
        = \sum_{z \in \mathcal{V}} \langle f, \Lambda_z(w_h - J w_h) \rangle
	\leq \sum_{z \in \mathcal{V}} \|f\|_{H^{-1}(\omega_z)} \|\nabla (\Lambda_z(w_h - J w_h))\|_{\omega_z}
\end{align*}
where the angle brackets denote duality between $H^{-1}(\Omega)$ and $H^1_0(\Omega)$.
The product rule implies
$
\nabla( \Lambda_z(w_h - J w_h)) 
=
\Lambda_z \nabla(w_h - J w_h) + (w_h - J w_h)\nabla \Lambda_z
$.
The        
triangle inequality, an inverse estimate,
the scaling $\|\nabla\Lambda_z\|_\infty\lesssim h_{\omega_z}^{-1}$,
and
   \Cref{l:smoothing} provide
	\begin{align*}
		\|\nabla( \Lambda_z(w_h - J w_h))\|_{\omega_z} 
              \lesssim h_{\omega_z}\|D^2_\pw w_h\|_{\omega(\omega_z)}
	\end{align*}
	with the patch $\omega(\omega_z)$ of $\omega_z$. 
	The combination of the two previously displayed formula with 
        the Cauchy inequality and the discrete norm equivalence 
        \eqref{e:normeq}
     prove 
	\begin{align*}
		\langle f, w_h -J w_h\rangle
          \lesssim \Big(\sum_{z \in \mathcal{V}} h_{\omega_z}^2
                    \|f\|_{H^{-1}(\omega_z)}^2\Big)^{1/2} \|w_h\|_{h}
	\end{align*}
and thus the asserted bound.
\end{proof}

\section{A~priori error analysis}\label{sec:pr-a-priori}

This section is devoted to the proof of \Cref{thm:a-priori}
and extensions of the error analysis to weaker norms (\Cref{t:Hs_estimate})
and a singular perturbation problem (\Cref{thm:a-priori-singular}).
Recall the notation $\sigma=D^2 u$ and $\sigma_h=D_\pw\nabla_h u_h=D^2_h u_h$
for the solutions $u$ and $u_h$ to \eqref{def:continuous-problem} and
\eqref{def:discrete-problem}.
The proof of \Cref{thm:a-priori} utilizes the following Galerkin projection onto the space of piecewise quadratic functions.
For any $v \in V$, the piecewise quadratic function
$G_h v \in P_2(\Tcal)$ is the unique solution to 
\begin{align*}
	(D^2_\pw G_h v, D^2_\pw p)_\Omega &= (D^2 v, D^2_\pw p)_{\Omega} &&\text{for all } p \in P_2(\Tcal),\\
	(G_h v, q)_\Omega &= (v, q)_\Omega &&\text{for all } q \in P_1(\mathcal{T}).
\end{align*}
By design, 
$\|D^2(v - G_h v)\| = \min_{p \in P_2(\Tcal)} \|D^2_\pw(v - p)\|$ and,
since the space of piecewise Hessians of the piecewise quadratic functions
equals the space of piecewise constant symmetric-matrix valued
functions, written
$D^2_\pw P_2(\mathcal{T}) = P_0(\mathcal{T};\mathbb S)$, and $D^2 v$ is symmetric pointwise a.e.,
we deduce
\begin{align}\label{eq:Galerkin-err}
	\|D^2(v - G_h v)\| = \|(1 - \Pi_0) D^2 v\|.
\end{align}
We proceed with the proof of \Cref{thm:a-priori}.

\begin{proof}[Proof of \Cref{thm:a-priori}]
	With the abbreviation $e_h = Q_h u - u_h$ and \eqref{def:continuous-problem}--\eqref{def:discrete-problem}, the proof departs from the split
	\begin{align}\label{eq:pr-a-priori-split}
		\|e_h\|^2_h = a_h(Q_h u - u_h, e_h) = a_h(Q_h u, e_h) - a(u, J e_h) + (f, J e_h - e_h)_\Omega. 
	\end{align}
	Elementary algebra proves
	\begin{align*}
		a_h(Q_h u, e_h) - a(u, J e_h) = (D_h^2 Q_h u, D_h^2 e_h - D^2 J e_h)_\Omega - (\sigma - D_h^2 Q_h u, D^2 J e_h)_\Omega.
	\end{align*}
	\Cref{lem:dis-cons} (with $p = G_h u$, $v_h = e_h$, $w_h = Q_h u$, $\psi = 0$)
    and \eqref{e:normeq} provide
    \begin{align*}
		(D_\pw^2 G_h u, D_h^2 e_h - D^2 J e_h)_\Omega 
		\lesssim  \|D^2_\pw (G_h u - J Q_h u)\|\|e_h\|_h
    \end{align*}
     and so, with the 
     continuity of $J$, 
	\begin{align*}
		&(D_h^2 Q_h u, D_h^2 e_h - D^2 J e_h)_\Omega 
		\lesssim \big(\|D_h^2 Q_h u - D^2_\pw G_h u\| 
		+ \|D^2_\pw (G_h u - JQ_h u)\|\big)\|e_h\|_h.
	\end{align*}
	Since 
        $
         \|\sigma - D^2_h Q_h u\| + \|\sigma - D^2_\pw G_h u\| 
        + \|\sigma - D^2 J Q_h u\| 
        \lesssim \|(1 - \Pi_0) \sigma\|
       $
       from \Cref{lem:ba-dis-Hessian}, \eqref{eq:Galerkin-err}, and \Cref{l:C1qi}, 
       the three previously displayed formula and triangle inequality imply
	\begin{align*}
		a_h(Q_h u, e_h) - a(u, J e_h) \lesssim \|(1 - \Pi_0) \sigma\|\|e_h\|_h.
	\end{align*}
	The combination of this with
	\Cref{d:osc} and \eqref{eq:pr-a-priori-split} results in
	\begin{align*}
		\|e_h\|_h \lesssim \|(1 - \Pi_0) \sigma\| + \osc(f,\Tcal).
	\end{align*}
 The error bound for $\sigma-\sigma_h$ follows from the 
 triangle inequality, the equivalence \eqref{e:normeq},
 the bound on  $e_h$, and the bound (C5).
To conclude the proof, we observe that $\|\sigma - D^2_\pw u_h\| \lesssim \|\sigma - \sigma_h\| + \|(1-\Pi_0)\sigma_h\| \leq 2\|\sigma - \sigma_h\| + \|(1 - \Pi_0) \sigma\|$ from (C3) and the triangle inequality.
\end{proof}
A direct consequence of \Cref{thm:a-priori} is the estimate
\begin{align}\label{ineq:alt-err-est}
	\|\sigma_h - D^2 J u_h\| + \|\sigma - D^2 J u_h\| \lesssim \|(1 - \Pi_0) \sigma\| + \osc(f,\Tcal)
\end{align}
with the triangle inequality, \Cref{l:C1qi}, and the continuity of $J$.
This shows that $D^2 J u_h$ is a feasible approximation of $\sigma$ and $\|\sigma_h - D^2 J u_h\|$ is an efficient contribution of a~posteriori error estimators. Another consequence of \Cref{thm:a-priori} is the quasi-optimality, up to data oscillations, of $\|\sigma - \bar\sigma_h\|$ with the piecewise constant function $\bar\sigma_h \coloneqq \Pi_0 \sigma$, namely
\begin{align}\label{ineq:q-opt-pi0}
	\|\sigma - \bar\sigma_h\| \lesssim \|(1 - \Pi_0) \sigma\| + \osc(f,\Tcal).
\end{align}
Other consequences of \Cref{thm:a-priori} are lower-order estimates as well as error estimates for the singular perturbed problem. Regarding the former,
we focus, for the sake of brevity, on $H^1$ estimates and mention that the arguments 
below also enable $H^s$ estimates for any $0 \leq s < 2$.
For low-order methods we do not expect higher convergence rates in norms weaker
than $H^1$, cf.~\cite{HuShi2012}.
We assume the existence of $0 < \delta \leq 1$ such that the solution to
$\Delta^2 z = g$ for any given $g \in H^{-1}(\Omega)$ enjoys the elliptic regularity $z \in H^{2 + \delta}(\Omega)$ with
\begin{align}\label{ineq:elliptic-regularity}
	\|z\|_{H^{2 + \delta}} \lesssim \|g\|_{H^{-1}(\Omega)}.
\end{align} 
In the two-dimensional case, such estimates are provided in
\cite{BlumRannacher1980,Grisvard1985}. 
Recall the data oscillation $\operatorname{osc}(f,\Tcal)$ from \Cref{d:osc}. 
Upper bounds are given in \Cref{lem:osc} and we abbreviate them 
by $\widetilde{\operatorname{osc}}(f,\Tcal)$.
\begin{corollary}[gradient error]
 \label{t:Hs_estimate}
	Suppose (C1)--(C5) and the regularity assumption
        \eqref{ineq:elliptic-regularity}.
        Then the discrete solution $u_h$ to \eqref{def:discrete-problem} satisfies
	\begin{align*}
		\|\nabla_\pw(u - u_h)\| \lesssim h_{\max}^\delta(\|(1 - \Pi_0) \sigma\| 
               + \widetilde{\osc}(f,\Tcal)).
	\end{align*}
\end{corollary}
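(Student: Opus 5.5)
We argue by duality, an Aubin--Nitsche argument carried out in $H^1$. First we replace $u_h$ by its smoothing $Ju_h\in V$. By \Cref{l:smoothing} with $\psi=u$, we have
\[
\|\nabla_\pw(u_h-Ju_h)\|\lesssim h_{\max}\bigl(\|D^2_\pw(u-u_h)\|+\|(1-\Pi_0)D^2_\pw u_h\|\bigr).
\]
\Cref{thm:a-priori} together with \eqref{e:ba-loc} bounds this by $h_{\max}(\|(1-\Pi_0)\sigma\|+\osc(f,\Tcal))$, which suffices since $\delta\le 1$. It therefore remains to estimate $e\coloneqq u-Ju_h\in V$ in the $H^1$ seminorm.

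For this, let $g\coloneqq-\Delta e\in H^{-1}(\Omega)$, so that $\|\nabla e\|^2=\langle g,e\rangle$ and $\|g\|_{H^{-1}}\le\|\nabla e\|$. Let $z\in V$ solve $\Delta^2 z=g$. By \eqref{ineq:elliptic-regularity}, $\|z\|_{H^{2+\delta}}\lesssim\|\nabla e\|$, and standard approximation gives $\|(1-\Pi_0)D^2z\|\lesssim h_{\max}^\delta\|\nabla e\|$. We then write
\[
\|\nabla e\|^2=a(z,e)=a(z,u)-a(z,Ju_h).
\]
We insert the discrete test function $Q_hz$, use $a(u,Jv)$ and $a_h(u_h,Q_hz)=(f,Q_hz)$, and use symmetry. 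This splits the identity into four groups of terms:
\begin{enumerate}[label=(\roman*)]
\item a Galerkin-type term $(\sigma-\sigma_h,D^2z-D^2_hQ_hz)$, bounded via \Cref{thm:a-priori} and \Cref{lem:ba-dis-Hessian} by $(\|(1-\Pi_0)\sigma\|+\osc)\,h_{\max}^\delta\|\nabla e\|$;
\item a consistency term $(\sigma_h,D^2_hQ_hz-D^2JQ_hz)$ together with its analogue with the roles of $z$ and $u_h$ exchanged;
\item a term $(\sigma-D^2Ju_h,\,\cdot)$, handled by \eqref{ineq:alt-err-est};
\item a right-hand-side term $(f,Q_hz-JQ_hz)$.
\end{enumerate}

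For the consistency terms (ii) we apply \Cref{lem:dis-cons}. In the version with $p=G_hu$ or $p=G_hz$ and $\psi=z$ (or $\psi=u$), the right-hand factor becomes $\|D^2_\pw(z-Q_hz)\|+\|(1-\Pi_0)D^2_\pw Q_hz\|\lesssim\|(1-\Pi_0)D^2z\|$ by (C3) and (C5). The other factor, $\|D^2_\pw(G_hu-JQ_hu)\|$ or the analogous term with $u_h$, is bounded by $\|(1-\Pi_0)\sigma\|+\osc$, using \eqref{eq:Galerkin-err} and \Cref{l:C1qi}. The remainders $(\sigma_h-D^2_\pw G_h\cdot,\ \cdot)$ are products of two such best-approximation errors. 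Each term is thus a product of an $O(\|(1-\Pi_0)\sigma\|+\osc)$ factor and an $O(h_{\max}^\delta\|\nabla e\|)$ factor.

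The main obstacle is the data term (iv). For it we redo the proof of \Cref{lem:osc} with $w_h=Q_hz$, but now use the sharper local bound from \Cref{l:smoothing} with $\psi=z$. This gives
\[
\|h^{-2}(Q_hz-JQ_hz)\|+\|h^{-1}\nabla_\pw(Q_hz-JQ_hz)\|\lesssim\|(1-\Pi_0)D^2z\|\lesssim h_{\max}^\delta\|\nabla e\|.
\]
With the orthogonality \eqref{eq:J-orth} in the $L^2$ case, or the vertex-patch localisation of \Cref{lem:osc} in the $H^{-1}$ case, the term is therefore bounded by $\widetilde\osc(f,\Tcal)\,h_{\max}^\delta\|\nabla e\|$. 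Here the oscillation must appear in its localized form $\widetilde\osc$ rather than through \Cref{d:osc}, since only the explicit bounds carry the additional $h^\delta$ factor. Dividing by $\|\nabla e\|$ and combining with the first step, together with $\osc\lesssim\widetilde\osc$, proves the claim.
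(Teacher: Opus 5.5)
Your proposal is correct and follows essentially the same route as the paper: duality with $z$ solving $\Delta^2 z=-\Delta(u-Ju_h)$, \Cref{lem:dis-cons} with $p=G_hu$ and $p=G_hz$ for the two consistency terms, the localized bound $\widetilde{\osc}$ combined with \Cref{l:smoothing} for the data term $(f,Q_hz-JQ_hz)$, and a final triangle inequality with $\|\nabla_\pw(u_h-Ju_h)\|$. The only difference is cosmetic: the paper splits $a(z,e)=a(z-JQ_hz,e)+a(JQ_hz,e)$ rather than isolating the Galerkin-type term $(\sigma-\sigma_h,D^2z-D^2_hQ_hz)$, but the resulting terms and their bounds are the same.
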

\begin{proof}
	Let $z \in V \cap H^{2+\delta}(\Omega)$ solve $\Delta^2 z = - \Delta (u - J u_h)$. An integration by parts and the variational formulation of this yield
	\begin{align*}
		\|\nabla(u - J u_h)\|^2 = -(u - J u_h,\Delta(u - J u_h))_\Omega = a(z,u-J u_h).
	\end{align*}
	An elementary algebraic split then shows
	\begin{align}\label{eq:pr-Hs-split}
         	\|\nabla(u - J u_h)\|^2
            =  a(z - J Q_h z, u - J u_h) + a(J Q_h z, u - J u_h).
	\end{align}
	The solution property of $u$ and $u_h$ from \eqref{def:continuous-problem}--\eqref{def:discrete-problem} prove
        for the second term on the right-hand side of \eqref{eq:pr-Hs-split} that
	\begin{align}\label{eq:pr-Hs-T1-split}
         \begin{aligned}
		a(J Q_h z, u - J u_h) = (f, J Q_h z)_\Omega - a(J Q_h z, J u_h) = (f, J Q_h z - Q_h z)_\Omega&\\
		- (D^2 J Q_h z - D_h^2 Q_h z, D^2 J u_h)_\Omega - (D_h^2 Q_h z, D^2 J u_h - \sigma_h)_\Omega&. 
         \end{aligned}
	\end{align}	
	We bound the last two terms on the right-hand side
        of \eqref{eq:pr-Hs-T1-split} as follows.
	Since $\|(1 - \Pi_0) D^2_\pw Q_h z\| \lesssim \|(1 - \Pi_0) D^2 z\|$ from a triangle inequality and (C5),
	\Cref{lem:dis-cons} (with $p = G_h u$, $v_h = Q_h z$, $w_h = u_h$, $\psi = z$) implies
	\begin{align*}
		(D^2 J Q_h z - D_h^2 Q_h z, D^2_\pw G_h u)_\Omega
          \lesssim \|(1 - \Pi_0) D^2 z\|\|D^2_\pw (G_h u - J u_h)\|.
	\end{align*}
	Together with $\|D^2 J Q_h z - D_h^2 Q_h z\| \lesssim \|(1 - \Pi_0) D^2 z\|$ from \Cref{lem:ba-dis-Hessian} and \Cref{l:C1qi} as well as $\|D^2_\pw (G_h u - J u_h)\| \lesssim \|(1 - \Pi_0) \sigma\| + \operatorname{osc}(f,\Tcal)$ from \eqref{eq:Galerkin-err} and \eqref{ineq:alt-err-est},
	\begin{align}\label{ineq:pr-Hs-T11}
		- (D^2 J Q_h z - D_h^2 Q_h z, D^2 J u_h)_\Omega
            \lesssim \|(1 - \Pi_0) D^2 z\|(\|(1 - \Pi_0) \sigma\| + \operatorname{osc}(f,\Tcal)) 
	\end{align}
	ensues.
	We infer from \Cref{lem:dis-cons} (with $p = G_h z$, $v_h = u_h$, $w_h = Q_h z$, $\psi = u$), $\|(1 - \Pi_0) D^2_\pw u_h\| \leq \|\sigma - D^2_\pw u_h\| + \|(1 - \Pi_0) \sigma\|$, and \Cref{thm:a-priori} that
	\begin{align*}
		(D_\pw^2 G_h z, D^2 J u_h - \sigma_h)_\Omega
               \lesssim \|D_\pw^2 (G_h z - J Q_h z)\|(\|(1 - \Pi_0) \sigma\| + \operatorname{osc}(f,\Tcal)).
	\end{align*}
	Since 
       $\|D_\pw^2 (G_h z - J Q_h z)\| + \|D_h^2 Q_h z - D^2_\pw G_h z\| \lesssim \|(1 - \Pi_0) D^2 z\|$ 
       from the triangle inequality, \eqref{eq:Galerkin-err}, 
       \Cref{lem:ba-dis-Hessian}, and \Cref{l:C1qi},
       we see that
	\begin{align}\label{ineq:pr-Hs-T12}
		-(D_h^2 Q_h z, D^2 J u_h - \sigma_h)_\Omega 
          \lesssim \|(1 - \Pi_0) D^2 z\| (\|(1 - \Pi_0) \sigma\| + \operatorname{osc}(f,\Tcal)) 
	\end{align}
	follows for the third term on the right-hand side of
        \eqref{eq:pr-Hs-T1-split}
        from the previously displayed formula and \eqref{ineq:alt-err-est}.
        Since $\|h^{j-2}D^j(J Q_h z - Q_h z)\| \lesssim \|D^2(J Q_h z - Q_h z)\|$ for $j \in \{0,1\}$ from the Poincar\'e inequality with \eqref{eq:J-orth}, 
      	we bound the first term on the right-hand
      	side, following the argumentation of \Cref{lem:osc}, by 
	\begin{align*}
		(f, J Q_h z - Q_h z)_\Omega
		\lesssim \widetilde{\osc}(f,\Tcal) \|D^2(J Q_h z - Q_h z)\| \lesssim \widetilde{\osc}(f,\Tcal) \|(1 - \Pi_0)D^2 z\|,
	\end{align*}
	where \Cref{l:C1qi} and (C5) is utilized in the final step.
	Hence,
	the combination of \eqref{eq:pr-Hs-T1-split}--\eqref{ineq:pr-Hs-T12} results in
	\begin{align}\label{ineq:pr-Hs-T1}
		a(J Q_h z, u - J u_h)
              \lesssim \|(1 - \Pi_0) D^2 z\|
              \left(\|(1 - \Pi_0) \sigma\| + \widetilde{\osc}(f,\Tcal)\right).
	\end{align}
	The right hand-side also controls the first term on the right-hand side of
        \eqref {eq:pr-Hs-split}	
        by a Cauchy inequality, \Cref{l:C1qi}, and \eqref{ineq:alt-err-est}.
	This, \eqref{eq:pr-Hs-split}, \eqref{ineq:pr-Hs-T1}, and the elliptic regularity \eqref{ineq:elliptic-regularity} of $z$ imply
	\begin{align*}
		\|\nabla(u - J u_h)\| \lesssim h_{\max}^{\delta}(\|(1 - \Pi_0) \sigma\| 
                 + \widetilde{\osc}(f,\Tcal)).
	\end{align*}
	From \Cref{l:smoothing}, we deduce that
	\begin{align*}
		\|\nabla_\pw(u_h - J u_h)\| \lesssim h_{\max}(\|\sigma - D^2_\pw u_h\| + \|(1 - \Pi_0) \sigma\|).
	\end{align*}
	The two previously displayed formula, \Cref{thm:a-priori}, 
and the triangle inequality conclude the assertion.
\end{proof}

In the remaining parts of this section, we extend the a~priori
error bound of \Cref{thm:a-priori} to the case of a parameter-dependent problem.
Given $\varepsilon > 0$, the singular perturbed biharmonic 
problem seeks the unique solution $u \in V$ to
\begin{align}\label{def:a-singular-perturbed}
	a_\varepsilon(u,v) 
        \coloneqq \varepsilon^2 (D^2 u, D^2 v)_\Omega + (\nabla u, \nabla v)_\Omega = 
         (f,v)_\Omega \quad\text{for any } v \in V.
\end{align}
The scalar product $a_\varepsilon$ induces the weighted norm $\|\bullet\|_\varepsilon$ in $V$.
The discrete problem seeks $u_h\in V_h$ such that 
\begin{align}\label{def:ah-singular-perturbed}
	a_{\varepsilon,h}(u_h,v_h) 
       \coloneqq \varepsilon^2(D^2_h u_h, D^2_h v_h)_\Omega 
           + (\nabla_\pw u_h, \nabla_\pw v_h)_\Omega
          =
         (f,v_h)_\Omega
\end{align}
for any $v_h \in V_h$. 
We endow $V_h$ with the weighted norm $\|\bullet\|_{\varepsilon,h}$ induced by $a_{\varepsilon,h}$.
Throughout this discussion, all constants hidden in the notation $\lesssim$ are independent of $\varepsilon$.
For the sake of brevity, we only carry out the analysis
for $\varepsilon \leq h_{\max}$ 
since the case $h_{\max} < \varepsilon$ is simpler,
cf.~\Cref{r:s-perturbed-asymptotic} for further details.
In addition to (C1)--(C5), we will assume continuity of trial
functions as in \Cref{rem:mod-smoothing} and, for simplicity,
quasi-uniform meshes.
\begin{lemma}[singular perturbed smoothing]\label{l:perturbed-smoothing}
	Suppose (C1)--(C4), continuity $V_h\subset H^1_0(\Omega)$
	and quasi-uniformity of the mesh family under consideration,
	as well as $\varepsilon\leq h_{\mathrm{max}}$.
	Then there
	exists a linear bounded operator $J : V_h \to V$ such that any $v_h \in V_h$ satisfies
	\begin{align}\label{ineq:J-loc-singular-perturbed}
		h_{\max}^{-2}\|v_h - J v_h\|^2 + \|\nabla(v_h - J v_h)\|^2 + \varepsilon^2\|D^2_\pw (v_h - J v_h)\|^2&\nonumber\\
		\lesssim \varepsilon \sum_{F \in \Fcal} \|[\nabla v_h \cdot \nu_F]_F\|^2_F + \|\nabla_h v_h - \nabla v_h\|^2&
	\end{align}
	as well as \eqref{eq:J-orth} and \eqref{eq:J-orth-singular}.
\end{lemma}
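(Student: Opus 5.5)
The plan is to build $J$ by a cut-off construction. The layer width is $\varepsilon$ rather than the mesh size, and the moment conditions are repaired afterwards by mesh-size bubbles. Since $V_h\subset H^1_0(\Omega)$, the jump $[\nabla v_h]_F$ has only a normal component, because tangential derivatives of a continuous function do not jump. Write
\begin{align*}
\eta^2\coloneqq\sum_{F\in\Fcal}\|[\nabla v_h\cdot\nu_F]_F\|_F^2 .
\end{align*}
The model computation motivating the right-hand side is the following. A $C^1$ correction $w$ that removes a normal-derivative jump $g$ across $F$ and is supported in a strip of width $\varepsilon$ behaves like $w\sim g\,\mathrm{dist}(\cdot,F)$ cut off at distance $\varepsilon$. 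It then satisfies
\begin{align*}
\|\nabla w\|^2\sim\varepsilon\|g\|_F^2,\qquad
\varepsilon^2\|D^2w\|^2\sim\varepsilon^2\,\varepsilon^{-1}\|g\|_F^2,\qquad
h_{\max}^{-2}\|w\|^2\sim h_{\max}^{-2}\varepsilon^3\|g\|_F^2\le\varepsilon\|g\|_F^2 ,
\end{align*}
using $\varepsilon\le h_{\max}$. This is exactly the first term of \eqref{ineq:J-loc-singular-perturbed}. A naive cut-off of $v_c-v_h$, with $v_c$ the companion from \Cref{l:smoothing}, fails: it produces a term $h^2\varepsilon^{-1}\eta^2$, because $v_c-v_h$ does not vanish on the skeleton. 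The construction must therefore produce a difference that vanishes to first order on faces.

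\emph{Step 1 (trace-matching companion).} Following the proof of \Cref{l:smoothing} with HCT/Worsey--Farin splits, I construct a $C^1$ function $v_c\in V$ whose degrees of freedom reproduce the trace of $v_h$ on the skeleton and whose normal derivative on each face is the average $\{\nabla v_h\cdot\nu_F\}_F$. The trace of $v_h$ is well defined there by continuity. The zero boundary values and the zero normal derivative on $\partial\Omega$ are imposed directly. This is possible away from the vertices (and, if $n=3$, edges), where $v_h$ need not be $C^1$; there, the vertex gradient is taken as an average. Consequently $d\coloneqq v_c-v_h$ vanishes on faces up to vertex/edge neighbourhoods, and its normal derivative equals $\pm\tfrac12[\nabla v_h\cdot\nu_F]$. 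Scaling and inverse estimates give $\|D^jd\|_T^2\lesssim h_T^{3-2j}\eta_{\omega_T}^2$.

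\emph{Step 2 (layer cut-off).} Let $\phi_\varepsilon$ be a smooth cut-off that equals $1$ in an $\varepsilon/2$-neighbourhood of the skeleton and $0$ outside an $\varepsilon$-neighbourhood, with $|D^k\phi_\varepsilon|\lesssim\varepsilon^{-k}$. Set $\tilde Jv_h\coloneqq v_h+\phi_\varepsilon d$. This function lies in $H^2_0(\Omega)$: it equals the $C^1$ function $v_c$ near the skeleton and is elementwise polynomial elsewhere. On the strip $S_\varepsilon\cap T$, first-order vanishing on faces gives $\|d\|_{S_\varepsilon\cap T}\lesssim\varepsilon\|\nabla d\|_{S_\varepsilon\cap T}$. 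In addition, $\|\nabla d\|_{S_\varepsilon\cap T}^2\lesssim\varepsilon\,\|\nabla d\|_{L^\infty(T)}^2h^{n-1}\lesssim\varepsilon\,\eta_{\omega_T}^2$ by polynomial scaling. The product rule then yields the model bounds above, with $\|D^2d\|_{S_\varepsilon}^2\lesssim\varepsilon h^{-2}\eta^2$ absorbed via $\varepsilon\le h$.

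\emph{Step 3 (moments).} The conditions \eqref{eq:J-orth} and \eqref{eq:J-orth-singular} are enforced by adding face and element bubbles scaled with $h$, as in \Cref{l:smoothing} and \Cref{rem:mod-smoothing}. The amplitudes are bounded by the moments of $\phi_\varepsilon d$, which are of order $\varepsilon^{3/2}\eta$, and by $\|\nabla_h v_h-\nabla v_h\|$ through the face-normal mismatch. The $h$-scaled bubbles contribute at most $(1+\varepsilon^2h^{-2})$ times these quantities squared, which fits the claim by quasi-uniformity and $\varepsilon\le h_{\max}$.

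The main obstacle is the vertex (and, if $n=3$, edge) neighbourhoods where exact trace matching in Step 1 is impossible. There $d$ does not vanish to first order, and the piecewise gradients of $v_h$ from different elements must be reconciled. I would first try cutting off in balls of radius $\varepsilon$ around each vertex. Inside such a ball, $d$ is bounded by the gradient jumps times $\varepsilon$, and each ball has volume $\varepsilon^n$. The resulting contributions scale like $\varepsilon^{n}\,\varepsilon^{-2}\varepsilon^2\,h^{1-n}\eta^2\le\varepsilon\eta^2$, so they are controlled by the same right-hand side.
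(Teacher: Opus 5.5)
Your Steps 2 and 3 work once you have a $C^1$ function $v_c\in V$ with $v_c=v_h$ and $\nabla v_c\cdot\nu_F=\{\nabla v_h\cdot\nu_F\}_F$ on every interior face (and $v_c=\nabla v_c\cdot\nu=0$ on $\partial\Omega$). For the planar DKT element such a $v_c$ exists exactly. Vertex gradients are single-valued and vanish at boundary vertices, so the full cubic HCT interpolant with the vertex data of $v_h$ and midpoint normal derivatives $\{\nabla v_h\cdot\nu_F\}_F(m_F)$ reproduces both edge traces. In that case your cut-off argument gives a valid proof without mesh refinement.

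The gap is Step 1 in the generality of the lemma. On a coarse HCT or Worsey--Farin split, the trace and normal derivative of $v_c$ on a face $F$ are single polynomials (piecewise on the split face if $n=3$). They are fixed jointly by the vertex and edge degrees of freedom and the face moments. The gradient $\nabla v_h$ need not be single-valued at vertices for a general $V_h\subset H^1_0(\Omega)$ satisfying (C1)--(C4). For $n=3$ it is not single-valued along edges, already for the three-dimensional DKT element of \Cref{s:dkt}, while Worsey--Farin requires single-valued edge-normal derivatives. Wherever this happens, averaging those degrees of freedom changes $v_c|_F$ on the whole face, not only near the vertex or edge. So $d=v_c-v_h$ does not vanish on faces ``up to vertex/edge neighbourhoods''. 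It is of size $h_F$ times the vertex or edge gradient jump on all of $F$. That is exactly the defect that produces the term $h^2\varepsilon^{-1}\sum_{F}\|[\nabla v_h\cdot\nu_F]_F\|_F^2$ you identified for the naive cut-off.

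Your last paragraph does not close this gap. It counts volumes of $\varepsilon$-balls, but it constructs no $C^1$ function inside the balls that glues in $C^1$ to a layer function matching $v_h$ to first order right up to the ball. For $n=3$ the defect lives in $\varepsilon$-tubes around the edges, of volume $\approx\varepsilon^2 h$, which the count $\varepsilon^n$ does not address. Resolving the scale $\varepsilon$ near the lower-dimensional skeleton is the real content of the lemma.

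The paper does this with a mesh instead of a cut-off. It applies the $C^1$ enrichment on a uniform refinement $\hat{\Tcal}$ with $\hat h_{\max}\approx\varepsilon$. Since $v_h$ is a single polynomial across all fine faces inside coarse elements, the enrichment changes $v_h$ only in fine elements touching the coarse skeleton. The standard bound $\sum_{j=0}^2\hat h^{2j-4}\|D^j_\pw r_h\|^2\lesssim\hat h^{-1}\sum_{F\in\Fcal}\|[\nabla v_h\cdot\nu_F]_F\|_F^2$, with $\hat h\approx\varepsilon\le h_{\max}$, then yields \eqref{ineq:J-loc-singular-perturbed} uniformly at faces, edges and vertices. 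The moment corrector is added afterwards as in \Cref{l:smoothing}.
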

\begin{proof}
	Let $v_h \in V_h$ be given. Following the localization argument of
	\cite[Lemma 3.5]{GallistlTian2024smai} with the $H^1_0(\Omega)$ conformity
	and the quasi-uniformity, we find
	a conforming discrete approximation $v_C \in V$ 
        over a uniformly refined subtriangulation $\hat{\Tcal}$ of $\Tcal$ 
        with maximal mesh-size $\hat{h}_{\max} \approx \varepsilon$ such that
	\begin{align*}
		h_{\max}^{-2}\|r_h\|^2 + \|\nabla r_h\|^2 + \varepsilon^2\|D^2_\pw r_h\|^2 \lesssim \varepsilon \sum_{F \in \Fcal} \|[\nabla v_h]_F \cdot \nu_F\|^2_F,
	\end{align*}
	where $r_h \coloneqq v_h - v_C$.
	To enforce the conditions \eqref{eq:J-orth} and \eqref{eq:J-orth-singular}, we construct a corrector function $w_C \in V$ as outlined in the proof of \Cref{l:smoothing} and \Cref{rem:mod-smoothing} satisfying \eqref{ineq:corrector-bound}.
	The function $J v_h \coloneqq v_C + w_C$ then satisfies \eqref{ineq:J-loc-singular-perturbed}
	as well as \eqref{eq:J-orth} and \eqref{eq:J-orth-singular}.
	The continuity of $J$ (in weighted norms) follows from the weighted trace inequality
	\begin{align*}
		\varepsilon \sum_{F \in \Fcal} \|[\nabla v_h]_F \cdot \nu_F\|^2_F \lesssim \|\nabla v_h\|^2 + \varepsilon\|\nabla v_h\|\|D^2_\pw v_h\|,
	\end{align*}
	in the regime $\varepsilon \leq h_{\max}$,
	\eqref{ineq:J-loc-singular-perturbed},
	and \eqref{e:normeq_nabla}, that is
	$\|J v_h\|_\varepsilon \lesssim \|v_h\|_{\varepsilon,h}$.
\end{proof}
As consequence of \Cref{l:perturbed-smoothing}, we obtain strengthened
versions of previous results.
First, any $v \in V$ satisfies
\begin{align}\label{ineq:qi-singular-perturbed}
	\varepsilon\|D^2(v - J Q_h v)\| \lesssim \varepsilon\|(1 - \Pi_0) D^2 v\| + \|\nabla(v - Q_h v)\| +  \|\nabla v - \nabla_h Q_h v\|
\end{align}
from \eqref{ineq:J-loc-singular-perturbed}, the weighted trace inequality, (C5), and the triangle inequality.
Second,
any $v_h,w_h \in V_h$ and $p \in P_2(\Tcal)$ satisfy
\begin{align}\label{ineq:d-cons-singular-perturbed}
	\varepsilon^2|(D_\pw^2 p, D_h^2 v_h - D^2 J v_h)_{\Omega}| \lesssim 
	\varepsilon\|D^2_\pw (p - J w_h)\| \|v_h\|_{\varepsilon,h}.
\end{align}
To prove this, we apply the trace inequality to the bound prior 
to \eqref{ineq:p2-cons-pr} to obtain
\begin{align*}
	&\varepsilon^2|(D_\pw^2 p, D_h^2 v_h - D^2 J v_h)_{\Omega}|\\ 
	&\qquad\lesssim \varepsilon^2(\|h^{-1/2} A\| + \|A\|^{1/2}\|D_\pw A\|^{1/2}) (\|h^{-1/2}B\| + \|B\|^{1/2}\|D_\pw B\|^{1/2})
\end{align*}
with the abbreviation $A \coloneqq \nabla_h v_h - \nabla J v_h$ and $B \coloneqq D^2_\pw(p - J w_h)$. 
Since $B$ is a piecewise polynomial function in $\hat{\Tcal}$, a triangulation with maximal mesh-size $\hat{h}_{\max} \approx \varepsilon$,
an inverse estimate shows $\|h^{-1/2}B\| + \|B\|^{1/2}\|D_\pw B\|^{1/2} \lesssim \varepsilon^{-1/2}\|B\|$.
From this, the foregoing displayed formula, the Young inequality
$\|A\|^{1/2}\|D_\pw A\|^{1/2} \leq \varepsilon^{-1/2}\|A\| + \varepsilon^{1/2}\|D_\pw A\|$,
and the continuity of $J$,
we infer \eqref{ineq:d-cons-singular-perturbed}.
Finally, for $f \in L^2(\Omega)$, we have the bound
\begin{align}\label{ineq:osc-singular-perturbed}
	\operatorname{osc}(f,\Tcal) \lesssim \|h(1 - \Pi_0) f\|
\end{align}
on the data oscillation by a Poincar\'e inequality with \eqref{eq:J-orth} and the continuity of $J$.
In the following, we abbreviate, for any $v \in V$,
\begin{align*}
	\mathcal{R}(v) \coloneqq \|(1 - \Pi_0) \nabla v\| + \|\nabla(v - Q_h v)\| +  \|\nabla v - \nabla_h Q_h v\|.
\end{align*}
\begin{corollary}[a~priori singular perturbed]\label{thm:a-priori-singular}
	Suppose quasi-uniform meshes,
	(C1)--(C5),
	$V_h\subseteq H^1_0(\Omega)$,
	$f \in L^2(\Omega)$, and $\varepsilon \leq h_{\max}$.
    The solutions $u$ to \eqref{def:a-singular-perturbed}
    and $u_h$ to \eqref{def:ah-singular-perturbed} satisfy
	\begin{align*}
		&\varepsilon\|\sigma - \sigma_h\| 
		+ \|\nabla (u - u_h)\| 
		\lesssim \varepsilon\|(1 - \Pi_0) \sigma\| 
		+ \mathcal{R}(u)
		+ \|h(1-\Pi_0) f\|.
	\end{align*}
\end{corollary}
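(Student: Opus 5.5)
I would mimic the proof of \Cref{thm:a-priori}, now in the weighted norm $\|\cdot\|_{\varepsilon,h}$ and with the smoothing $J$ from \Cref{l:perturbed-smoothing}. Set $e_h \coloneqq Q_h u - u_h$. By \eqref{def:a-singular-perturbed}--\eqref{def:ah-singular-perturbed},
\begin{align*}
\|e_h\|_{\varepsilon,h}^2 = a_{\varepsilon,h}(Q_h u, e_h) - a_\varepsilon(u, J e_h) + (f, J e_h - e_h)_\Omega .
\end{align*}
The data term is bounded by $\operatorname{osc}(f,\Tcal)\|e_h\|_{\varepsilon,h}$. Here $\operatorname{osc}$ is understood with respect to the weighted norm, and \eqref{ineq:osc-singular-perturbed} gives $\operatorname{osc}(f,\Tcal) \lesssim \|h(1-\Pi_0)f\|$. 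The consistency term splits into a second-order part and a first-order part,
\begin{align*}
&\varepsilon^2\big[(D_h^2 Q_h u, D_h^2 e_h - D^2 J e_h)_\Omega - (\sigma - D_h^2 Q_h u, D^2 J e_h)_\Omega\big]\\
&\quad + (\nabla_\pw Q_h u, \nabla_\pw(e_h - J e_h))_\Omega - (\nabla(u - Q_h u), \nabla J e_h)_\Omega .
\end{align*}

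\emph{Second-order part.} Insert $p = G_h u$ and use \eqref{ineq:d-cons-singular-perturbed} with $w_h = Q_h u$. This gives a bound by $\varepsilon\|D^2_\pw(G_h u - JQ_h u)\|\,\|e_h\|_{\varepsilon,h}$. For the remainder,
\[
\varepsilon^2\|D_h^2 Q_h u - D^2_\pw G_h u\|\,\|D^2_h e_h - D^2 J e_h\| + \varepsilon^2\|\sigma - D^2_h Q_h u\|\,\|D^2 J e_h\|,
\]
I would use the continuity $\varepsilon\|D^2 J e_h\| \lesssim \|e_h\|_{\varepsilon,h}$. The coefficients $\varepsilon\|\sigma - D_h^2 Q_h u\|$ and $\varepsilon\|\sigma - D^2_\pw G_h u\|$ are controlled by $\varepsilon\|(1-\Pi_0)\sigma\|$ through \Cref{lem:ba-dis-Hessian} and \eqref{eq:Galerkin-err}. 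The coefficient $\varepsilon\|\sigma - D^2 JQ_h u\|$ is controlled by \eqref{ineq:qi-singular-perturbed}, i.e.\ by $\varepsilon\|(1-\Pi_0)\sigma\| + \mathcal R(u)$.

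\emph{First-order part.} The term $(\nabla(u - Q_hu), \nabla J e_h)_\Omega$ is at most $\mathcal R(u)\|e_h\|_{\varepsilon,h}$ by the Cauchy inequality and continuity of $J$. For the other term I write $\nabla_\pw Q_h u = \Pi_0\nabla u + (\nabla_\pw Q_h u - \nabla u) + (1-\Pi_0)\nabla u$. The piecewise-constant part drops out by the gradient orthogonality \eqref{e:gradcons_singpert}, which holds for $J$ by \eqref{eq:J-orth-singular}. The two remaining pieces are bounded by $\mathcal R(u)\,\|\nabla_\pw(e_h - Je_h)\| \lesssim \mathcal R(u)\|e_h\|_{\varepsilon,h}$, using \eqref{ineq:J-loc-singular-perturbed} together with the weighted trace inequality and \eqref{e:normeq_nabla}. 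Altogether,
\[
\|e_h\|_{\varepsilon,h} \lesssim \varepsilon\|(1-\Pi_0)\sigma\| + \mathcal R(u) + \|h(1-\Pi_0)f\| .
\]

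\emph{Conclusion.} By the triangle inequality, $\varepsilon\|\sigma - \sigma_h\| \le \varepsilon\|\sigma - D_h^2 Q_h u\| + \|e_h\|_{\varepsilon,h}$, and the first summand is handled by \Cref{lem:ba-dis-Hessian}. Similarly $\|\nabla_\pw(u - u_h)\| \le \|\nabla(u - Q_h u)\| + \|e_h\|_{\varepsilon,h}$, where the first summand is part of $\mathcal R(u)$.

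\emph{Main obstacle.} The delicate step is making every $\varepsilon$-scaling consistent. One must check that $\varepsilon\|D^2(JQ_hu - Q_hu)\|$ and the consistency estimate \eqref{ineq:d-cons-singular-perturbed} never produce a factor $\varepsilon^{-1}$, and this is exactly what the choice $\hat h_{\max}\approx\varepsilon$ in \Cref{l:perturbed-smoothing} is designed to give. I expect the bound for $\|\nabla_\pw(e_h - Je_h)\|$ to need care: it requires $\varepsilon\sum_F\|[\nabla e_h\cdot\nu_F]_F\|_F^2 \lesssim \|e_h\|_{\varepsilon,h}^2$ in the regime $\varepsilon\le h_{\max}$, which follows from the weighted trace inequality together with Young's inequality. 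It also requires $\|\nabla_h e_h - \nabla e_h\| \lesssim \|\nabla_\pw e_h\|$, which holds by \eqref{e:normeq_nabla}.
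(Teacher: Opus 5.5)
Your proposal is correct and follows the paper's proof. It uses the same error split with $e_h = Q_h u - u_h$ and substitutes \eqref{ineq:d-cons-singular-perturbed} and \eqref{ineq:qi-singular-perturbed} for \Cref{lem:dis-cons} and \Cref{l:C1qi}; the gradient terms are handled by the orthogonality \eqref{e:gradcons_singpert} together with the weighted continuity of $J$, and the data term by \eqref{ineq:osc-singular-perturbed}. Your regrouping of the first-order terms differs only cosmetically from the paper's $(\nabla(Q_h u-u),\nabla e_h)+(\nabla u,\nabla(e_h-Je_h))$, and your intermediate coefficient $\varepsilon\|(1-\Pi_0)\sigma\|+\mathcal R(u)$ is the scaling these ingredients actually deliver, whereas the display \eqref{e:split_sing_proof} puts $\varepsilon$ in front of $\mathcal R(u)$ as well, which looks like a harmless slip that does not affect the final bound.
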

\begin{proof}
   	Let $e_h = Q_h u - u_h$.
    We proceed along the lines of the proof of \Cref{thm:a-priori},
    but replace \Cref{lem:dis-cons} by \eqref{ineq:d-cons-singular-perturbed} and
    \Cref{l:C1qi} by \eqref{ineq:qi-singular-perturbed}.
    This leads to
	\begin{align}\label{e:split_sing_proof}
		\begin{aligned}
			a_{\varepsilon,h}(Q_h u, e_h) - a_\varepsilon(u, J e_h) 
			&\leq C\varepsilon(\|(1 - \Pi_0) \sigma\| + \mathcal{R}(u))\|e_h\|_{\varepsilon,h}\\
			&\qquad + (\nabla Q_h u, \nabla e_h)_\Omega - (\nabla u, \nabla J e_h)_\Omega
		\end{aligned}
	\end{align}
	with a generic positive constant $C$.
    From the orthogonality \eqref{e:gradcons_singpert} and continuity of $J$, we infer
    for the last two terms on the right-hand side of 
    \eqref{e:split_sing_proof} that 
	\begin{align*}
		(\nabla Q_h u, \nabla e_h)_\Omega - (\nabla u, \nabla J e_h)_\Omega &= (\nabla(Q_h u - u), \nabla e_h)_\Omega + (\nabla u, \nabla (e_h - J e_h))_\Omega\\
		&\lesssim (\|\nabla(u - Q_h u)\| + \|(1 - \Pi_0) \nabla u\|)\|e_h\|_{\varepsilon,h}.
	\end{align*}
	The combination of this
        with \eqref{e:split_sing_proof}, \eqref{eq:pr-a-priori-split}, and \eqref{ineq:osc-singular-perturbed} results in
	\begin{align*}
		\|e_h\|_{\varepsilon,h} \lesssim \varepsilon\|(1 - \Pi_0) \sigma\|
		+ \mathcal{R}(u) + \|h(1-\Pi_0) f\|.
	\end{align*}
	The assertion ensues from this and a triangle inequality.
\end{proof}
To derive convergence rates from the a~priori error estimate of 
\Cref{thm:a-priori-singular}, we need explicit knowledge on
the degrees of freedom of the discrete trial space $V_h$.
We refer to \Cref{cor:DKT} for an exemplary application to DKT elements.
If $h_{\max} \leq \varepsilon$, then we obtain the following a~priori error estimate.
\begin{remark}[$h_{\max} \leq \varepsilon$]\label{r:s-perturbed-asymptotic}
If $h_{\max} \leq \varepsilon$, then a modification of the smoothing operator $J$ in \Cref{l:smoothing} is not necessary as the stability
	\begin{align*}
		\|\nabla J v_h\| \lesssim \|\nabla_\pw v_h\| 
		+ h_{\max}\|D^2_\pw v_h\| \leq \|\nabla_\pw v_h\| + \varepsilon\|D_\pw^2 v_h\|
	\end{align*}
	holds and thus $\|J v_h\|_\varepsilon \lesssim \|v_h\|_{\varepsilon,h}$ for any $v_h \in V_h$.
	Following the arguments presented in the proof of \Cref{thm:a-priori-singular},
	we deduce, under the assumptions (C1)--(C5),
	continuity of trial functions, and $f \in L^2(\Omega)$, that
	\begin{align*}
		&\varepsilon\|\sigma - \sigma_h\| + \|\nabla (u - u_h)\| \lesssim \varepsilon\|(1 - \Pi_0) \sigma\|\\
		&\qquad + \|(1 - \Pi_0) \nabla u\| + \|\nabla(u - Q_h u)\| + \varepsilon^{-1}\|h^2(1-\Pi_0)f\|.
	\end{align*}
\end{remark}

\begin{remark}
 The $H^1_0(\Omega)$ conformity is a sufficient criterion for
 consistency of the scheme in the formal limit $\varepsilon=0$,
 see \cite{NilsenTaiWinther2001}.
 	For the more general case of non quasi-uniform meshes
        with a certain grading,
        a construction in the spirit of
 	\cite[Lemma 3.5]{GallistlTian2024smai},
 	would require 
        a subtriangulation $\hat{\Tcal}$ of $\Tcal$ so
 	that the local mesh-size $\hat{h}$ satisfies
 	$\hat{h} \lesssim \varepsilon$ a.e.~in $\Omega$ and $h/\hat{h} \lesssim 1$ wherever $h \leq \varepsilon$.
 	Such construction remains technically challenging.
\end{remark}

\section{A~posteriori error analysis}\label{sec:pr-a-post}

This section is devoted to the proof of 
the a~posteriori error bound of \Cref{thm:a-post}.
With the abbreviation $\bar\sigma_h=\Pi_0\sigma_h$, we define the error estimator
\begin{align}\label{e:mudef}
	\mu^2 \coloneqq \|&h^2 (f - \div_\pw \div_\pw \sigma_h)\|^2 
      + \|\operatorname{skw}\,\sigma_h\|^2 + \|\sigma_h - \bar\sigma_h\|^2 \nonumber\\
	&+ \sum_{\substack{F \in \Fcal\\ F\subset\partial\Omega}}
       h_F\|[\sigma_h^{\mathit{tang}}]_F\|_F^2 
        + \sum_{\substack{F \in \Fcal \\ F\not\subset\partial\Omega}}
	\Big(h_F\|[\sigma_h]_F\|_F^2 
	+ h_F^3\|[\div_\pw \sigma_h \cdot \nu_F]_F\|_F^2\Big),
\end{align}
where $\sigma_h^{\mathit{tang}} \coloneqq \sigma_h(I_{n\times n}-\nu_F\nu_F^\top)$ denotes
the tangential component of $\sigma_h$.
Here and throughout this section, $\operatorname{skw} M$ denotes the skew-symmetric part
of a matrix $M$, while $\operatorname{sym} M$ is its symmetric part.

\begin{proof}[Proof of \Cref{thm:a-post}]
	The proof departs from the split
	\begin{align}\label{eq:pr-a-post-err-split}
		&\|\sigma - \sigma_h\|^2 
        = \inf_{\psi \in V}\|D^2 \psi - \sigma_h\|^2 
       +
       \left( \sup_{\psi \in V \setminus \{0\}} \int_\Omega (\sigma - \sigma_h) : D^2 \psi /\|D^2 \psi\|
       \right)^2
	\end{align}
	of the error $\|\sigma - \sigma_h\|$ into a nonconforming and consistency part
        \cite{CarstensenGallistlHu2013}.
	The nonconforming error $\inf_{\psi \in V}\|D^2 \psi - \sigma_h\|$ is controlled by \eqref{ineq:v_h-v_c}, the Poincar\'e inequality with (C1), and (C3), namely
	\begin{align}\label{ineq:pr-a-post-nc-error}
		\inf_{\psi \in V}\|D^2 \psi - \sigma_h\| \leq \|D^2 J u_h - \sigma_h\|^2 \lesssim \|(1 - \Pi_0) \sigma_h\|^2 
		+ \sum_{F \in \Fcal} h_F\|[\sigma_h^{\mathit{tang}}]_F \|_F^2.
	\end{align}
        We proceed with bounding the second term on the right-hand side
        of \eqref{eq:pr-a-post-err-split}.
	Any $\psi \in V$ satisfies
	\begin{align}\label{eq:pr-a-post-err-split-cons}
		\int_\Omega &(\sigma-\sigma_h) : D^2 \psi 
       	= T_1 + T_2 + T_3
	\end{align}
        with the terms
	\begin{align*}
        \begin{aligned}
        & T_1\coloneqq \int_\Omega (\sigma - \sigma_h) : (D^2 \psi - D^2 J Q_h \psi), \\
        & T_2\coloneqq \int_\Omega \left( \sigma : D^2 J Q_h \psi 
                                  - \sigma_h : D_h^2 Q_h \psi 
                           \right)
        ,\quad 
        T_3 \coloneqq	- \int_\Omega \sigma_h : (D^2 J Q_h \psi - D_h^2 Q_h \psi).
        \end{aligned}
	\end{align*}
	Two piecewise integrations by parts and \eqref{def:continuous-problem} show
	\begin{align*}
		&T_1 = \int_\Omega (f - \div_\pw \div_\pw \sigma_h)(\psi - JQ_h\psi)\\
		&~- \sum_{F \in \Fcal} 
               \int_F \left(\nabla (\psi - JQ_h\psi) \cdot [\sigma_h \nu_F]_F - (\psi - JQ_h\psi)\, [\div_\pw \sigma_h \cdot \nu_F]_F
                     \right).
	\end{align*}
	Standard techniques in a~posteriori error estimation with \Cref{l:C1qi} lead to
	\begin{align}\label{ineq:t1}
		T_1 \lesssim \mu \|D^2 \psi\|. 
	\end{align}
	The solution properties of $u$ and $u_h$ in \eqref{def:continuous-problem} and \eqref{def:discrete-problem}, \Cref{lem:osc}, and (C5) show
	\begin{align}\label{ineq:t23}
		T_2= \int_\Omega f(JQ_h \psi - Q_h \psi)  
              \lesssim \|h^2(1 - \Pi_0) f\| \|D^2 \psi\|.
	\end{align}
	From the best approximation property of $\Pi_0$ and the triangle inequality, we infer
	\begin{align*}
		\|h^2(1 - \Pi_0) f\| \leq \|h^2 (f - \div_\pw \div_\pw \sigma_h)\| + \|h^2(1 - \Pi_0) \div_\pw \div_\pw \sigma_h\|.
	\end{align*}
	Since $\div_\pw \div_\pw \bar\sigma_h \equiv 0$, the inverse estimate shows $\|h^2 \div_\pw \div_\pw \sigma_h\| \lesssim \|\sigma - \bar\sigma_h\|$. Therefore, $\|h^2(1 - \Pi_0) f\|$ is dominated by the error estimator $\mu$.  
	For the final term $T_3$, we employ 
	\Cref{lem:dis-cons} to infer, for any piecewise
        quadratic $p$, that
	\begin{align*}
		|(D^2_\pw p, D^2 J Q_h \psi - D_h^2 Q_h \psi)| \lesssim \|D^2_\pw(p - J u_h)\|\|D^2 \psi\|.
	\end{align*}
        Since $D^2_\pw P_2(\Tcal) = P_0(\Tcal;\mathbb{S})$
        (cf.\ the lines preceding \eqref{eq:Galerkin-err}), this and the triangle inequality imply
	\begin{align}\label{ineq:t4}
        \begin{aligned}
		T_3 &\lesssim \min_{\Phi \in P_0(\Tcal;\mathbb{S})} \big(\|\sigma_h - \Phi\| + \|\Phi - D^2 J u_h\|\big) \|D^2 \psi\|\\
		&\lesssim \Big(\min_{\Phi \in P_0(\Tcal;\mathbb{S})} \|\sigma_h - \Phi\| + \|\sigma_h - D^2 J u_h\|\Big) \|D^2 \psi\|.
        \end{aligned}
	\end{align}
	We note that 
        $$
         \min_{\Phi \in P_0(\Tcal;\mathbb{S})} \|\sigma_h - \Phi\|^2 
         =
         \|\mathrm{skw}\, \sigma_h\|^2 
           + \|(1 - \Pi_0) \mathrm{sym} \,\sigma_h\|^2 
         \leq
         \|\mathrm{skw}\, \sigma_h\|^2 + \|(1 - \Pi_0) \sigma_h\|^2
         .
        $$
         Thus, the assertion follows from
	\eqref{eq:pr-a-post-err-split}--\eqref{ineq:t4}.
\end{proof}

\begin{remark}[efficiency]
	The error estimator $\mu$ in \eqref{e:mudef} is, 
	up to the best approximation error $\|(1 - \Pi_0) \sigma\|$ by piecewise constants,
	efficient in the sense that
	\begin{align}\label{ineq:eff}
		\mu \lesssim \|(1 - \Pi_0) \sigma\| 
                + \|\sigma - \sigma_h\| 
               + \|h^2(1-\Pi_0)f\|.
	\end{align}
	In fact, the symmetry of $\sigma$ and a triangle inequality imply
	\begin{align*}
		\|\mathrm{skw}\,\sigma_h\| 
                + \|(1 - \Pi_0) \sigma_h\| \leq \|\sigma - \sigma_h\| 
               + 2\|(1 - \Pi_0) \sigma\|. 
	\end{align*}
	The efficiency of the remaining terms follow from standard bubble function techniques \cite{Verfuerth2013}, cf.~also \cite{BeiraodaVeigaNiiranenStenberg2007,HuShi2009,GallistlTian2024smai,LiangTran2026} for further details.
\end{remark}

\begin{remark}[piecewise constant post-processing]\label{rem:simplified_est}
	By triangle, discrete trace, and inverse inequalities,
        the error estimator $\mu$ from \eqref{e:mudef} is equivalent to
	\begin{align*}
		\widetilde{\mu}^2 &\coloneqq \|h^2 f\|^2 
               + \|\mathrm{skw}\,\bar\sigma_h\|^2 + \|\sigma_h-\bar\sigma_h\|^2
               \\
		&\quad+ 
                 \sum_{\substack{F \in \mathcal{F}\\ F\subset\partial\Omega}}
                          h_F\|[\bar\sigma_h^{tang}]_F\|_F^2 
               + \sum_{\substack{F \in \mathcal{F}\\ F\not \subset\partial\Omega}}
		h_F\|[\bar \sigma_h]_F\|_F^2.
	\end{align*}
    Considering \eqref{ineq:eff} and \Cref{thm:a-priori}, $\widetilde{\mu}$ is a reliable and efficient error estimator for $\|\sigma - \bar{\sigma}_h\|$, i.e.,
	\begin{align*}
		\|\sigma - \bar{\sigma}_h\| \lesssim \widetilde{\mu} \lesssim \|(1 - \Pi_0) \sigma\| + \|h^2(1-\Pi_0)f\| \leq \|\sigma - \bar{\sigma}_h\| + \|h^2(1-\Pi_0)f\|.
	\end{align*}
\end{remark}

\section{Discrete Kirchhoff triangle and its generalization to three space dimensions}
\label{s:dkt}

We apply the results from prior sections to the DKT
setting. The method for $n=2$ was proposed in \cite{BatozBatheHo1980}
and described and analyzed in \cite{Braess2007,Bartels2015}.
Our description covers the case $n=3$
as well and leads to a very simple low-order scheme
in three space dimensions, which extends the known element
by maintaining the underlying idea. The error analysis
simultaneously covers the cases $n\in\{2,3\}$.

Given a simplex $T \subset \mathbb R^n$, $n \in \{2,3\}$, 
with the vertices $z_j$ for $j = 1, \dots, n+1$ and the midpoints $a_{jk\ell} \coloneqq (z_j + z_k + z_\ell)/3$ for $1 \leq j < k < \ell \leq n+1$, 
recall from \cite[Theorem 2.2.8]{Ciarlet2002} that any cubic polynomial $p$
is uniquely determined by the nodal values and the values of its first derivatives at the vertices $z_j$, $1 \leq j \leq n+1$ as well as the nodal values at $a_{jk\ell}$.
For each triplet $j,k,\ell$, let
\begin{align*}
	\psi_{jk\ell}(p) \coloneqq 6p(a_{jk\ell}) - \sum\nolimits_{m \in \{j,k,\ell\}} (2p(z_m) - \nabla p(z_m) \cdot (z_m - a_{jk\ell})).
\end{align*}
We define the reduced set
\begin{align*}
	P_3^-(T) \coloneqq \{p \in P_3(T) : \psi_{jk\ell}(p) = 0 \text{ for any } 1 \leq j < k < \ell \leq n+1\}
\end{align*}
of cubic polynomials by removing the
degrees of freedom associated with the nodal evaluation at $a_{jk\ell}$, $1 \leq j < k < \ell \leq n+1$.
Thus, any $p \in P_3^-(T)$ is uniquely defined by prescribing the nodal values and first derivatives at the $n+1$ vertices and so, $\dim P_3^-(T) = (n+1)^2$ \cite[Theorem 2.2.9]{Ciarlet2002}.
Furthermore, let
\begin{align*}
	\Theta_h(T) \coloneqq 
       \{\theta_h \in P_2(T)^n : \theta_h \cdot \nu_F \in P_1(F) ~\text{for any face }
          F \text { of } T\}.
\end{align*}
Recall that $\theta_h \in P_2(T)^n$ is uniquely defined by the values at all 
vertices and midpoints of all edges of $T$.
The $n$ values at the midpoint of an edge $E$ are fixed in $n-1$ 
linear independent directions from the side conditions 
$\theta_h \cdot \nu_F \in P_1(F)$ on $n-1$ adjacent faces.
Thus, only the tangential direction $t_E$, $\theta_h(m_E)\cdot t_E$,
remains as a degree of freedom in the midpoint $m_E$ of $E$.
Hence, any $\theta_h \in \Theta_h(T)$ is uniquely defined by 
the values at the vertices and midpoints of all edges along 
the tangential direction $t_E$.
This leads to $\dim \Theta_h(T) = n(n+1) + n(n+1)/2 = 3n(n+1)/2$.

\begin{remark}[unique extension by edge values]\label{rem:unique-extension}
	If any two functions $v_h, w_h \in P_3^-(T)$ on a simplex $T$
	coincide along all edges of $T$, then $v_h = w_h$.
	In fact, given a vertex $z$, then $v_h(z) = w_h(z)$ 
	and $\nabla v_h(z) \cdot t_E = \nabla w_h(z) \cdot t_E$ for any 
	edge $E$ containing $z$.
	The $n$ tangential directions $t_E$ of these edges are linear independent,
	whence the gradients of $v_h$ and $w_h$ coincide at $z$.
\end{remark}

The discretization utilizes the DKT element of 
\cite{BatozBatheHo1980} with the discrete space
\begin{align*}
	V_h &\coloneqq \{v \in H^1_0(\Omega) : v|_T \in P_3^-(T) \ \text{ for all } T \in \mathcal{T},\\
	&\qquad\nabla v \text{ is continuous at all vertices and 0 at all boundary vertices}\}
\end{align*}
and reconstructs the gradient in 
\begin{align*}
	\Theta_h \coloneqq \{\theta_h \in [H^1_0(\Omega)]^n : \theta_h|_T \in \Theta_h(T)\}.
\end{align*}
The discrete gradient operator $\nabla_h : V_h \to \Theta_h$ maps $v_h \in V_h$ onto $\nabla_h v_h \in \Theta_h$
with
\begin{align}
	\nabla_h v_h(z) &= \nabla v_h(z) 
	    &&\text{for any vertex } z,\label{def:discrete-gradient-1}\\
	\nabla_h v_h(m_E) \cdot t_E 
	    &= \nabla v_h(m_E) \cdot t_E 
		&&\text{for any edge } E
               \text{ with midpoint }m_E
	\label{def:discrete-gradient-2}
	.
\end{align}
We proceed by verifying conditions (C1)--(C5) for this method.
The properties (C1) and (C2) follow from the definition of $V_h$, whereas
(C3)--(C5) follow from the next results.

\begin{lemma}[(C3)--(C5) for DKT]\label{l:c4dkt}
	The DKT elements satisfy (C3)--(C5).
	The constants hidden in the notation may depend on $T$ but remain
        bounded for all $T \in \mathbb{T}$, where $\mathbb{T}$ denotes 
        a class of triangulation involving finitely many shapes.
\end{lemma}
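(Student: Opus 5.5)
The plan is to verify (C3) and (C4) by local scaling and compactness arguments on a single simplex, and to obtain (C5) from a quasi-interpolation built on vertex degrees of freedom.

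\textbf{Step 1: (C3) via the criterion after (C3).} The paper notes that (C3) holds if $\nabla_h v_h|_T$ affine implies $v_h|_T\in P_2(T)$ and $\nabla_h v_h|_T=\nabla v_h|_T$. I will verify exactly this on each simplex $T$.

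First, every $p\in P_2(T)$ lies in $P_3^-(T)$. For quadratics, $\psi_{jk\ell}(p)=0$ is the exactness of the corresponding quadrature-type identity. This is checked by Taylor expansion of $p$ about $a_{jk\ell}$: it holds because $\sum_m (z_m-a_{jk\ell})=0$ and the second-order terms cancel. For such $p$ we have $\nabla p\in P_1(T)^n\subset\Theta_h(T)$, so $\nabla_h p=\nabla p$ by unisolvence of the DOFs \eqref{def:discrete-gradient-1}--\eqref{def:discrete-gradient-2}.

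Conversely, suppose $\nabla_h v_h|_T$ is affine. Then on each edge $E$ the tangential component $\nabla v_h\cdot t_E$ is a quadratic in one variable. It agrees with the affine function $\nabla_h v_h\cdot t_E$ at both endpoints and at the midpoint, so the two coincide on $E$. Hence $v_h|_E$ equals the restriction of a quadratic $q$, namely the one with $\nabla q=\nabla_h v_h$ and $q(z_1)=v_h(z_1)$. For this I need consistency of $q$ along all edges, which holds because the edge integrals of $\nabla_h v_h\cdot t_E$ give the vertex value differences of $v_h$. Remark~\ref{rem:unique-extension} then gives $v_h|_T=q$.

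With the criterion in hand, (C3) follows by a finite-dimensional norm-equivalence argument on $P_3^-(T)/P_2(T)$, where both sides of (C3) are norms. The constants are uniform over $\mathbb T$ after affine scaling to the finitely many reference shapes.

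\textbf{Step 2: (C4).} The map $v_h\mapsto\nabla v_h-\nabla_h v_h$ vanishes when $v_h|_T\in P_2(T)$, by Step~1. Moreover, if $D^2v_h=D^2_h v_h$ on $T$, then $\nabla v_h-\nabla_h v_h$ is constant and vanishes at the vertices, so it is zero. Therefore $\|D^2 v_h-D^2_h v_h\|_T$ is a norm on the finite-dimensional quotient containing the kernel of the left-hand side, and the same equivalence-plus-scaling argument yields the $h_T^{-1}$ weight.

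\textbf{Step 3: (C5).} Define $Q_h v$ through its DOFs: vertex values and vertex gradients obtained by patch averaging. For $n=3$, point values of $\nabla v$ are not defined for $v\in H^2$, so I will use averaged Taylor polynomials. Concretely, on each vertex patch $\omega_z$ take $p_z\in P_2$ with $D^2p_z=$ the mean of $D^2v$ over $\omega_z$ and lower-order moments matched, and set $Q_hv(z)=p_z(z)$ and $\nabla Q_hv(z)=\nabla p_z(z)$. Boundary vertices get the value zero, which is compatible with $v\in H^2_0$.

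This $Q_h$ reproduces $P_2$ locally, by Step~1. Hence Bramble--Hilbert applied on $\omega_T$ with the quotient by $P_2$, together with local stability of $Q_h$ in scaled $H^2$ norms, gives
\begin{align*}
\sum_{j=0}^2\|h_T^{j-2}D^j(v-Q_hv)\|_T\lesssim |v-p|_{H^2(\omega_T)}=\|(1-\Pi_0)D^2v\|_{\omega_T}
\end{align*}
for the best choice of $p$. This uses that $\omega_T$ is connected, so a single $P_2$ polynomial can be taken on the patch; strictly, this gives the patch-wise projection rather than $\Pi_0$, so care is needed.

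\textbf{Main obstacle.} I expect this last point to be the hardest. The bound $\|(1-\Pi_0)D^2v\|_{\omega_T}$ is elementwise, whereas Bramble--Hilbert naturally yields the patch mean. The first thing I would try is to exploit that $v\in H^2$ is $C^1$-compatible across faces: a piecewise-$P_2$ approximation with $H^2$-conforming jumps can be controlled by elementwise $(1-\Pi_0)D^2v$, in the spirit of equivalence results of Veeser type for piecewise polynomial best approximation of gradients. This lets $p$ vary per element while $Q_h$ only sees vertex data that are stable. Near the boundary I will also handle $H^2_0$ preservation.
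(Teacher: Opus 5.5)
Your Steps 1 and 2 are essentially the paper's proof of (C3) and (C4): kernel inclusions on the finite-dimensional space $P_3^-(T)$ plus scaling over finitely many shapes. Only the two right-hand sides of (C3) need to be norms on $P_3^-(T)/P_2(T)$. One detail in Step 1 needs stating. A quadratic $q$ with $\nabla q=\nabla_h v_h$ exists only if the affine field $\nabla_h v_h$ has a symmetric Jacobian. This does follow from your edge-integral remark: the circulation of $\nabla_h v_h$ around the boundary of every two-dimensional face vanishes, so by Stokes' theorem its constant rotation vanishes on every face. The paper avoids this: $v_h$ is quadratic on every edge, so it agrees on all edges with its $P_2(T)$ interpolant, and Remark~\ref{rem:unique-extension} gives $v_h\in P_2(T)$.

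The genuine gap is (C5), and it is not only in the analysis: the operator you define cannot satisfy (C5). (C5) forces $Q_hv=v$ on $T$ for every $v\in V$ that is elementwise quadratic on $\omega_T$. So $Q_h$ must reproduce $C^1$ piecewise quadratics on the patch, and these are in general not global quadratics. For example, take an element $T$ with $\overline{\omega_T}\subset\Omega$ and a straight line $\{\ell=0\}$ of mesh edges through a vertex $z$ of $T$, as in any structured mesh of a square. Let $v=\chi\,\ell_+^2$ with $\ell_+=\max\{\ell,0\}$ and a cut-off $\chi\in C_c^\infty(\Omega)$, $\chi\equiv 1$ near $\overline{\omega_T}$. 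Then $(1-\Pi_0)D^2v=0$ on $\omega_T$. With your moment-matching definition of $p_z$ one computes
\[
\nabla p_z(z)-\nabla v(z)=|\omega_z|^{-1}\sum_{K\subset\omega_z}|K|\,\bigl(D^2v|_K-\bar H_z\bigr)(\mathrm{mid}_K-z),
\]
where $\bar H_z$ is the mean of $D^2v$ over $\omega_z$ and $\mathrm{mid}_K$ is the centroid of $K$. For this $v$ the right-hand side is a positive multiple of $\nabla\ell$. Hence $\nabla Q_hv(z)\neq 0=\nabla v(z)$, and $Q_hv\neq v$ on $T$. So the patch-versus-element discrepancy you flag is an actual counterexample, and no Veeser-type equivalence can repair it.

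The missing idea is to approximate $v$ elementwise first and only then average vertex data. The paper sets $Q_h=\mathcal A_h\Pi_h$:
\begin{itemize}
\item $\Pi_h$ is the elementwise $L^2$ projection onto $P_3^-(K)$. It needs no point values of $\nabla v$, so $n=3$ causes no difficulty.
\item $\mathcal A_h$ averages vertex values and gradients, with zero data at boundary vertices.
\end{itemize}
The argument then runs in three steps.
\begin{enumerate}
\item The averaging estimate bounds $\Pi_hv-\mathcal A_h\Pi_hv$ on $T$ by $\sum_{F\cap T\neq\emptyset}\bigl(h_F^{-3}\|[\Pi_hv]_F\|_F^2+h_F^{-1}\|[\nabla_\pw\Pi_hv]_F\|_F^2\bigr)$.
\item Since $v\in H^2_0(\Omega)$ has no jumps of value or gradient and vanishing traces on $\partial\Omega$, these jumps equal those of $\Pi_hv-v$. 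Trace inequalities then reduce everything to $\sum_j\|h^{j-2}D^j_\pw(v-\Pi_hv)\|_{\omega_T}$.
\item Because $P_2(K)\subset P_3^-(K)$, the elementwise Poincar\'e inequality bounds this by $\|(1-\Pi_0)D^2v\|_{\omega_T}$.
\end{enumerate}
The $C^1$ continuity of $v$ enters only through these vanishing jumps, which is exactly what lets the local quadratic vary from element to element. The same mechanism also settles your concern about boundary vertices and $H^2_0$ preservation.
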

\begin{proof}
	The stated inequalities are invariant under translation and scaling.
	Since only a finite number of different simplex shapes are involved,
	the constants remain uniformly bounded for that class of meshes.
	
	Let $v_h \in P_3^-(T)$ for some simplex $T$.
	We begin with proving (C3).
	We first establish
	$\|D^2_h v_h - D^2 v_h\|_T 	\lesssim \|(1 - \Pi_0) D^2 v_h\|_T$.
   For the proof of this estimate, we note that,
   if the left-hand side vanishes, then $v_h$ is a quadratic polynomial.
   It is straightforward to verify that then the affine vector fields 
   $\nabla_h v_h = \nabla v_h$ coincide
   by the definition of $\nabla_h$ 
   from \eqref{def:discrete-gradient-1}--\eqref{def:discrete-gradient-2}.
   Therefore, the left-hand side vanishes and, 
   by equivalence of norms in finite dimensional spaces,
   the claim follows.
   
   We furthermore claim that
   $\|(1 - \Pi_0) D^2 v_h\|_T	\lesssim \|(1 - \Pi_0) D^2_h v_h\|_T$.
	If the right-hand side of this relation vanishes,
	then $\nabla_h v_h$ is an affine vector field.
	From the assignment in 
	\eqref{def:discrete-gradient-1}--\eqref{def:discrete-gradient-2}, 
	we deduce that on an arbitrary edge $E$ of $T$,
	the tangential derivative $\partial v_h/\partial t_E$,
	which is a quadratic polynomial along $E$,
	coincides with $\nabla_h v_h \cdot t_E$.
	Thus, the tangential derivative of $v_h$ along $E$ is affine.
	This implies that $v_h$ is quadratic along $E$.
	Since $P_2(T) \subset P_3^-(T)$ 
	\cite[Theorem 2.2.9]{Ciarlet2002} and all degrees of freedom of 
	$P_2(T)$ lie on the union of edges of $T$,
	there exists a function $\tilde{v}_h \in P_2(T)$ 
	with $\tilde{v}_h|_E = v_h|_E$ for any edge $E$ of $T$.
	By \Cref{rem:unique-extension}, $v_h = \tilde{v}_h \in P_2(T)$ 
	and therefore, the left-hand side vanishes and, by equivalence of norms 
	in finite space dimensions, the claim ensues. 
	This proves (C3).
	
	For the proof of (C4), we note that the assignment
	\eqref{def:discrete-gradient-1} implies
	$(\nabla v_h - \nabla_h v_h)(z) = 0$ for any vertex $z$ of $T$.
	Therefore, constants are eliminated and (C4) follows
	from a discrete Poincar\'e inequality.
	For verifying (C5), we design an averaging operator $Q_h$ as follows.
	Given a piecewise polynomial function $w_h$,
	the nodal average $\mathcal{A}_h w_h \in V_h$ of $w_h$ 
	is uniquely defined by the nodal values
       (the $\Sigma$ with the bar represents the average)
	\begin{align*}
	\mathcal{A}_h w_h(z) 
		\coloneqq \overline{\sum_{T \in \mathcal{T}_z}} w_h|_T(z)
               \quad\text{and}\quad 
            \nabla \mathcal{A}_h w_h(z) 
            \coloneqq  \overline{\sum_{T \in \mathcal{T}_z}} \nabla w_h|_T(z)
	\end{align*}
	for any interior vertex $z$, where
	$\mathcal{T}_z$ denotes the set of of all simplices containing $z$.
	Standard averaging techniques show the bound
	\begin{align}\label{ineq:proof-interpolation-averaging}
        \begin{aligned}
	  &\sum_{j=0}^2	h_T^{2(j-2)}\|D^{j}(w_h - \mathcal{A}_h w_h)\|_T^2
	  \lesssim \sum_{\substack{F \in \mathcal{F}\\F \cap T \\\neq \emptyset}}
               \big(h_F^{-3}\|[w_h]_F\|_F^2 + h_F^{-1}\|[\nabla w_h]_F\|_F^2\big).
        \end{aligned}
	\end{align}
	Given $v \in V$, we define the quasi-interpolation as 
        $v_h = Q_h v \coloneqq \mathcal{A}_h \Pi_h v$, 
        where $\Pi_h$ denotes the $L^2$ orthogonal projection onto 
        the piecewise polynomial functions that belong to
        $P_3^-(T)$ when restricted to any simplex $T\in\Tcal$.
         The choice $w_h \coloneqq \Pi_h v$ in \eqref{ineq:proof-interpolation-averaging},
	$[w_h]_F = [v - w_h]_F$, $[\nabla w_h]_F = [\nabla(v - w_h)]_F$ for any $F \in \mathcal{F}$, and the trace inequality imply
	\begin{align*}
	 	\sum_{j=0}^2 \|h_T^{j-2}D^{j}(v - Q_h v)\|_{L^2(T)}
	 	\lesssim
		\sum_{j=0}^2 \|h_T^{j-2}D_\pw^{j}(v - \Pi_h v)\|_{L^2(\omega_T)}.
	\end{align*}
	Since $P_2(K) \subset P_3^-(K)$ for any simplex $K$,
	this and the Poincar\'e inequality conclude the proof of the 
	error bound in (C5).
\end{proof}

Since the DKT element satisfies (C1)--(C5) and
$H^1_0(\Omega)$ conformity, we obtain the following
error bounds.
Recall that $\sigma_h = D^2_h u_h = D \nabla_h u_h$.

\begin{corollary}[DKT error bounds]\label{cor:DKT}
        Let $u$ denote the solution to \eqref{def:continuous-problem}
        and $u_h$ denote the discrete solution to
        $\eqref{def:discrete-problem}$ discretized with the Discrete
        Kirchhoff method described in this section.
	They satisfy
	\begin{align*}
		\|\sigma - \sigma_h\| \lesssim \|(1 - \Pi_0) \sigma\| + \osc(f,\Tcal).
	\end{align*}
	If the elliptic regularity \eqref{ineq:elliptic-regularity} is satisfied, then
	\begin{align*}
		\|\nabla_\pw(u - u_h)\| \lesssim h_{\max}^\delta(\|(1 - \Pi_0) \sigma\| + \widetilde{\osc}(f,\Tcal)).
	\end{align*}
        Furthermore, the following a~posteriori error bound holds
	\begin{align*}
		\|\sigma - \sigma_h\| \lesssim \eta
               \lesssim \|\sigma-\sigma_h\| + \|(1-\Pi_0)\sigma\|
                       +\|h^2(1-\Pi_0)f\|
	\end{align*}
	with the error estimator
	\begin{align*}
		\eta^2 &\coloneqq \|h^2 f\|^2 + \|\operatorname{skw}\bar\sigma_h\|^2
		+ \|\sigma_h - \bar\sigma_h\|^2
		+ \sum_{\substack{F\in\mathcal{F} 
                        \\ F\not\subset\partial\Omega}}
		 h_F\|[\bar\sigma_h \nu_F]_F\|_F^2 
		.
	\end{align*}
	For the singularly perturbed problem \eqref{def:a-singular-perturbed},
	the solutions $u$ to \eqref{def:a-singular-perturbed} and
        $u_h$ to \eqref{def:ah-singular-perturbed}
        satisfy, for quasi-uniform meshes,
        the error bound from \Cref{thm:a-priori-singular}.
	If $\Omega \subset \mathbb{R}^2$ is convex, then
	\begin{align*}
		\varepsilon\|\sigma - \sigma_h\| + \|\nabla(u - u_h)\| \lesssim h_{\max}^{1/2}\|f\|.
	\end{align*}

\end{corollary}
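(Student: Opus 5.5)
The plan is to read off the corollary from the abstract results, since \Cref{l:c4dkt} together with the observations preceding it shows that the DKT pair $(V_h,\nabla_h)$ satisfies (C1)--(C5). In addition, $V_h\subset H^1_0(\Omega)$ holds.

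\textbf{A priori and gradient bounds.} The first estimate is then \Cref{thm:a-priori} verbatim. The $H^1$ estimate is \Cref{t:Hs_estimate} under the regularity assumption \eqref{ineq:elliptic-regularity}.

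\textbf{A posteriori bound.} I would start from \Cref{thm:a-post} and the equivalent simplified estimator $\widetilde\mu$ of \Cref{rem:simplified_est}, and reduce $\widetilde\mu$ to $\eta$ by exploiting the DKT structure. Since $\nabla_h u_h\in[H^1_0(\Omega)]^n$ is continuous, the tangential derivative of $\nabla_h u_h$ along each face is continuous. Hence $[\sigma_h^{tang}]_F=0$ on every face, and on boundary faces $\sigma_h^{tang}=0$ because $\nabla_h u_h$ vanishes there.

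For the piecewise constant $\bar\sigma_h$, the boundary jump terms $h_F\|[\bar\sigma_h^{tang}]_F\|_F^2$ are bounded by $h_F\|[(\bar\sigma_h-\sigma_h)^{tang}]_F\|_F^2$. By a discrete trace inequality this is controlled by $\|\sigma_h-\bar\sigma_h\|^2$. Similarly, on interior faces the full jump $[\bar\sigma_h]_F$ splits into its normal-column part $[\bar\sigma_h\nu_F]_F$ and the tangential part $[\bar\sigma_h^{tang}]_F$. The latter is again absorbed by $\|\sigma_h-\bar\sigma_h\|$ via the trace inequality.

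This gives $\widetilde\mu\lesssim\eta+\|\sigma_h-\bar\sigma_h\|\lesssim\eta$, and trivially $\eta\lesssim\widetilde\mu$. Reliability and efficiency then follow from \Cref{thm:a-post}, \Cref{rem:simplified_est}, and the efficiency remark.

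\textbf{Singularly perturbed problem.} The bound of \Cref{thm:a-priori-singular} applies directly, since its hypotheses are (C1)--(C5), $H^1_0$ conformity, and quasi-uniformity.

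For the explicit rate on convex planar domains, I would bound each term on the right-hand side:
\begin{itemize}
\item[(i)] Standard regularity for the singularly perturbed problem (as in Nilsen--Tai--Winther) gives $\|u\|_{H^2}\lesssim\varepsilon^{-1/2}\|f\|$ and $\|u\|_{H^3}\lesssim\varepsilon^{-3/2}\|f\|$. Interpolating, $\varepsilon\|(1-\Pi_0)\sigma\|\lesssim\varepsilon\min\{\|u\|_{H^2},h\|u\|_{H^3}\}\lesssim h^{1/2}\|f\|$, using $\varepsilon\le h$.
\item[(ii)] $\mathcal R(u)\lesssim h\|u\|_{H^2}\lesssim h\varepsilon^{-1/2}\|f\|$ looks too weak, so I would instead use the $H^1$-type regularity $\|u\|_{H^{3/2}}\lesssim\|f\|$ (uniform in $\varepsilon$). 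Together with the local approximation of $Q_h$ and $\nabla_h Q_h$ in $H^1$ (interpolating between (C4)/(C5)-type bounds), this yields $\mathcal R(u)\lesssim h^{1/2}\|f\|$.
\item[(iii)] Finally, $\|h(1-\Pi_0)f\|\le h\|f\|$.
\end{itemize}

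The main obstacle is step (ii): controlling $\|\nabla v-\nabla_hQ_hv\|$ and $\|\nabla(v-Q_hv)\|$ by $h^{1/2}\|v\|_{H^{3/2}}$. This requires an $H^1$-stable version of the (C5) approximation and an operator-interpolation argument between the $H^1$ and $H^2$ estimates.
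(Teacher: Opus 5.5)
Your treatment of the a~priori bound, the $H^1$ bound, the a~posteriori bound, the applicability of the singularly perturbed estimate, and step (i) of the convex-domain rate agrees with the paper. For the a~posteriori bound, your explicit reduction of $\widetilde\mu$ to $\eta$ via $[\sigma_h^{\mathit{tang}}]_F=0$ and a discrete trace inequality for $\sigma_h-\bar\sigma_h$ just spells out the paper's one-line argument.

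The gap is step (ii), and it is not merely technical. The term $\|(1-\Pi_0)\nabla u\|$ is harmless; the two terms involving $Q_h$ are the problem. Uniform $H^{3/2}$ regularity plus operator interpolation does not yield $h^{1/2}\|f\|$ for them. Since $Q_h$ enforces $Q_hv(z)=0$ and $\nabla Q_hv(z)=0$ at boundary vertices, the bound $\|\nabla(v-Q_hv)\|\lesssim h\|D^2v\|$ holds only on $H^2_0(\Omega)$; on $H^2(\Omega)\cap H^1_0(\Omega)$ the boundary elements give only $h^{1/2}$. Interpolating against $H^1_0$-stability therefore controls the error by the norm of $[H^1_0(\Omega),H^2_0(\Omega)]_{1/2}=H^{3/2}_{00}(\Omega)$, not $H^{3/2}(\Omega)$. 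That norm contains $\|\rho^{-1/2}\nabla v\|$ with $\rho=\operatorname{dist}(\cdot,\partial\Omega)$. For the boundary-layer solution, $\nabla u\approx\nabla u^0$ outside an $\varepsilon$-layer, where $u^0\in H^2(\Omega)\cap H^1_0(\Omega)$ solves $-\Delta u^0=f$. So this term grows like $|\log\varepsilon|^{1/2}\|\partial_\nu u^0\|_{L^2(\partial\Omega)}$. In fact the estimate $\|\nabla(v-Q_hv)\|\lesssim h^{1/2}\|v\|_{H^{3/2}(\Omega)}$ you aim for cannot hold uniformly on $V$. Take $v$ whose normal derivative behaves like $|\log\rho|^{\alpha}$, $0<\alpha<1/2$, down to a very thin layer. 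Its $H^{3/2}$ norm stays bounded, yet the boundary elements, where $\nabla Q_hv$ vanishes at the boundary vertices, produce an error of order $h^{1/2}|\log h|^{\alpha}$.

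What is missing is the finer structure of $u$ rather than its $H^{3/2}$ norm. For convex $\Omega$ one has $\|u^0\|_{H^2}\lesssim\|f\|$, $\|\nabla(u-u^0)\|\lesssim\varepsilon^{1/2}\|f\|$, and $\|D^2u\|\lesssim\varepsilon^{-1/2}\|f\|$ \cite{NilsenTaiWinther2001}. Instead of interpolation, the paper proves the multiplicative bound $\|\nabla(v-Q_hv)\|^2\lesssim h_{\max}\|\nabla v\|\|D^2v\|$, using the averaging estimate, a multiplicative trace inequality, and the $H^1$ stability of $\Pi_h$. It obtains the analogous bound for $\nabla v-\nabla_hQ_hv$ via $\|\nabla Q_hv-\nabla_hQ_hv\|\lesssim\|(1-I_1)\nabla Q_hv\|$. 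It then follows the proof of \cite[Theorem 5.2]{NilsenTaiWinther2001}, which rests on the reduced solution $u^0$. Roughly, $u-u^0$ enters the product bound with $\|\nabla(u-u^0)\|\,\|D^2(u-u^0)\|\lesssim\|f\|^2$, giving $O((\varepsilon+h_{\max})^{1/2})\|f\|$. Meanwhile $u^0$ has bounded $H^2$ norm, and its nonvanishing normal derivative only costs $\|\nabla u^0\|$ on the strip of width $h$ along $\partial\Omega$, which is $O(h^{1/2})\|f\|$. With this splitting (equivalently, a direct bound of the $K$-functional of $u$ at $t=h$ rather than of $\|u\|_{H^{3/2}}$), your steps (i) and (iii) complete the argument for $\varepsilon\le h_{\max}$.
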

\begin{proof}
	The first asserted a~priori error bounds follow
        from \Cref{thm:a-priori} and \Cref{t:Hs_estimate}.
        Since $\nabla_h u_h \in H^1_0(\Omega)$ is continuous,
	the tangential jump of $\sigma_h$ vanishes along any face $F \in \Fcal$.
	Hence, \Cref{thm:a-post} and \Cref{rem:simplified_est} imply the asserted a~posteriori bound.
	For the singular perturbed problem \eqref{def:a-singular-perturbed},
	the trace inequality in \eqref{ineq:proof-interpolation-averaging} and interpolation properties of $\Pi_h$ imply
	\begin{align*}
		\|\nabla(\Pi_h v - \mathcal{A} \Pi_h v)\|_T^2 \lesssim h_T\|\nabla(v - \Pi_h v)\|_{\omega_T}\|D^2_\pw(v - \Pi_h v)\|_{\omega_T}
	\end{align*}
	for any $v \in V$ and $T \in \Tcal$, similar to \cite[Ineq.~(4.4)]{NilsenTaiWinther2001}. The sum of this over $T\in \Tcal$, a triangle inequality, and the stability $\|\nabla_\pw \Pi_h v\| \lesssim \|\nabla v\|$ provide
	\begin{align}\label{ineq:err-sing-perturbed}
		\|\nabla(v - Q_h v)\|^2 \lesssim h_{\max}\|\nabla v\|\|D^2 v\|.
	\end{align}
	To bound $\|\nabla v - \nabla_h Q_h v\|$, we observe
	that $\nabla_h Q_h v$ and $\nabla Q_h v$ coincide
	in all degrees of freedom of $[P_2(\Tcal)]^n$, except for
	normal directions of point evaluations in edge midpoints,
	where $\nabla_h Q_h v$ behaves as $I_1 \nabla Q_h v$ with the standard $P_1$ nodal interpolation $I_1$.
	Therefore, equivalence of norms in finite dimensional spaces implies
	\begin{align*}
		\|\nabla Q_h v - \nabla_h Q_h v\| \lesssim \|(1 - I_1) \nabla Q_h v\|.
	\end{align*}
	From this and \eqref{ineq:err-sing-perturbed},
	we derive, with standard interpolation arguments, that
	\begin{align*}
		\|\nabla v - \nabla_h Q_h v\| \lesssim h_{\max}\|\nabla v\|\|D^2 v\|.
	\end{align*}
	With this and \eqref{ineq:err-sing-perturbed},
	it is straightforward to extend the proof of \cite[Theorem 5.2]{NilsenTaiWinther2001} to the DKT elements and we conclude the displayed convergence rates in planar convex domains.
\end{proof}
To simplify the implementation, we can modify the discrete right-hand side with the nodal interpolation $I_1$ as suggested in \cite{Bartels2015}. The modified discrete problem seeks $\tilde{u}_h \in V_h$ such that
\begin{align}\label{e:dkt-nodal-int}
	a_h(\tilde{u}_h, v_h) = (f,v_h)_{\Omega} \quad\text{for any } v_h \in V_h.
\end{align}
It is straightforward to verify the following a~priori error estimate.
\begin{corollary}[DKT with nodal interpolation]\label{cor:dkt-nodal}
	The discrete solution $\tilde{u}_h$ to \eqref{e:dkt-nodal-int} 
        with $\tilde \sigma_h = D^2_h u_h$ satisfies
	\begin{align*}
		\|\sigma - \tilde \sigma_h\| 
            \lesssim \|(1-\Pi_0)\sigma\| + \mathrm{osc}(f,\Tcal) + \sup_{v_h\in V_h \setminus \{0\}} 
		\langle f,v_h-I_1 v_h \rangle/\|v_h\|_h. 
	\end{align*}
\end{corollary}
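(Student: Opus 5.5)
The modified problem \eqref{e:dkt-nodal-int} is presumably meant with right-hand side $(f, I_1 v_h)_\Omega$ (or the duality pairing $\langle f, I_1 v_h\rangle$), since otherwise it coincides with \eqref{def:discrete-problem}. I read it that way, and read $\tilde\sigma_h = D^2_h\tilde u_h$. The plan is to repeat the proof of \Cref{thm:a-priori} verbatim and track the single place where the right-hand side enters. With $\tilde e_h \coloneqq Q_h u - \tilde u_h$, the analogue of \eqref{eq:pr-a-priori-split} reads
\begin{align*}
\|\tilde e_h\|_h^2 = a_h(Q_h u,\tilde e_h) - a(u, J\tilde e_h) + \langle f, J\tilde e_h - \tilde e_h\rangle + \langle f, \tilde e_h - I_1\tilde e_h\rangle .
\end{align*}
This uses \eqref{def:continuous-problem} tested with $J\tilde e_h\in V$ and the modified discrete equation tested with $\tilde e_h$. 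Adding and subtracting $\langle f,\tilde e_h\rangle$ produces the last term.

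The first difference is handled exactly as in the proof of \Cref{thm:a-priori}. We use \Cref{lem:dis-cons} with $p=G_h u$, together with \Cref{lem:ba-dis-Hessian}, \eqref{eq:Galerkin-err}, \Cref{l:C1qi} and the continuity of $J$. This bounds it by $\|(1-\Pi_0)\sigma\|\,\|\tilde e_h\|_h$. These steps never used the discrete equation, so they transfer unchanged.

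The two remaining terms are controlled by definition. The third term is bounded by $\osc(f,\Tcal)\|\tilde e_h\|_h$ via \Cref{d:osc}. The fourth is bounded by $\sup_{v_h}\langle f,v_h-I_1v_h\rangle/\|v_h\|_h$ times $\|\tilde e_h\|_h$. Dividing by $\|\tilde e_h\|_h$ gives
\begin{align*}
\|\tilde e_h\|_h \lesssim \|(1-\Pi_0)\sigma\| + \osc(f,\Tcal) + \sup_{v_h\in V_h\setminus\{0\}} \langle f,v_h-I_1v_h\rangle/\|v_h\|_h .
\end{align*}
Finally, we apply the triangle inequality
\begin{align*}
\|\sigma-\tilde\sigma_h\|\le \|\sigma - D^2_hQ_hu\| + \|\tilde e_h\|_h ,
\end{align*}
and bound the first summand by \Cref{lem:ba-dis-Hessian}. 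This concludes the proof.

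The only point needing care is that the pairing $\langle f, I_1 v_h\rangle$ is well defined. For $f\in L^2$ this is clear. For $f\in H^{-1}$, the identity $I_1 v_h\in P_1(\Tcal)\cap H^1_0(\Omega)$ holds because $V_h\subset H^1_0$ with vanishing boundary vertex values. There is no genuine obstacle: the argument is a bookkeeping modification of \Cref{thm:a-priori}, and the extra consistency term is simply carried along as the new supremum in the bound.
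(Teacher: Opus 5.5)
Your proposal is correct and follows the route the paper intends: the paper omits the proof, saying only that it follows the arguments of its a~priori analysis, and your reading of \eqref{e:dkt-nodal-int} with right-hand side $(f,I_1 v_h)_\Omega$ and $\tilde\sigma_h = D^2_h\tilde u_h$ (correcting the two typos) is the intended one. Rerunning the proof of \Cref{thm:a-priori} with the extra consistency term $\langle f,\tilde e_h - I_1\tilde e_h\rangle$ bounded by the new supremum is exactly what is needed.
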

\begin{proof}
	The proof follows the arguments of this section and is omitted for the sake of brevity.
\end{proof}
By approximation property of the nodal interpolation $I_1$, the consistency error $\langle f, v_h - I_1 v_h \rangle$ is of second order for $f \in L^2(\Omega)$ and of first order for $f \in H^{-1}(\Omega)$.

\section{Discrete stream functions for two-dimensional Stokes elements}
\label{s:stream}

In this section, we draw a connection between the classical
two-dimensional DKT element and the Bernardi--Raugel discretization
of the Stokes system.
This links DKT-like elements to the modified schemes initiated by
\cite{Linke2014}
and later developed in
\cite{LinkeMatthiesTobiska2016,LedererLinkeMerdonSchoeberl2017,
      JohnLinkeMerdonNeilanRebholz2017}
such that \Cref{thm:a-priori} offers an alternative error bound.
The spaces in the well known Stokes system are the velocity
space $W \coloneqq [H^1_0(\Omega)]^2$ of vector-valued $H^1$ functions
with homogeneous boundary conditions, and the pressure space
$Q \coloneqq L^2_0(\Omega)$ of $L^2$ functions with vanishing global average.
Given $f\in [L^2(\Omega)]^2$,
the Stokes system seeks $(w,p)\in W\times Q$ such that
\begin{align}\label{e:stokes}
\begin{aligned}
(Dw,Dv)_\Omega
 - (p,\div v)_\Omega
  &= (f,v)_\Omega 
  &&\text{for all } v\in W,
\\
-(\div w,q)_\Omega 
  &=  0
  &&\text{for all } q\in Q.
\end{aligned}
\end{align}
The Bernardi--Raugel discretization is based on conforming
standard first-order finite elements for the discretization of $W$
that are enriched by quadratic edge bubbles pointing in normal
direction for each interior edge, that is
\begin{equation*}
W_h
  =
 \{v\in W : v|_T \text{ affine for any }T\in\mathcal T\}
\oplus
\operatorname{span}\{ b_F \nu_F : F\in\mathcal F, F\not\subset\partial\Omega\},
\end{equation*}
where $b_F$ denotes the usual quadratic edge bubble for
an interior edge $F$. Together with the choice
$Q_h \coloneqq P_0(\mathcal T)\cap L^2_0(\Omega)$ of piecewise constants
with vanishing global mean, this yields a stable discretization
for \eqref{e:stokes}, see \cite{BoffiBrezziFortin2013}.
In order to make the error $w-w_h$ independent of the pressure
error in the system, 
\cite{Linke2014,LinkeMatthiesTobiska2016,
      JohnLinkeMerdonNeilanRebholz2017}
proposed a modification of the discrete right-hand side
involving a divergence-conforming reconstruction of discretely
divergence-free fields. This makes the discrete velocity
vaiable $w_h$ in the discrete system blind against shifts of
$f$ by gradients.
In the present situation, the standard Raviart--Thomas interpolation
\cite{BoffiBrezziFortin2013}, denoted by $I_{\mathit RT}$,
maps $W_h$ to the space $RT_0(\mathcal T)$ of lowest-order
Raviart--Thomas elements and satisfies
$\Pi_0\div w_h = \div I_{\mathit RT} w_h$ for any
$w_h\in W_h$.
The modified discrete system then seeks 
$(w_h,p_h)\in W_h\times Q_h$ such that
\begin{align}\label{e:stokes_discrete}
\begin{aligned}
(Dw_h,Dv_h)_\Omega
 - (p_h,\div v_h)_\Omega
  &= (f,I_{\mathit{RT}}v_h)_\Omega 
  &&\text{for all } v_h\in W_h,
\\
-(\div w_h,q_h)_\Omega 
  &=  0
  &&\text{for all } q_h\in Q_h.
\end{aligned}
\end{align}
Following the lines of
\cite{LinkeMatthiesTobiska2016} yields optimal first-order
convergence of the error $w-w_h$ in the $H^1$ norm, provided
$w$ belongs to $H^2(\Omega)$.

In order to show an alternative error bound, we establish that
the DKT element is equivalent to the Bernardi--Raugel pair.
The first observation is that $W_h$ equals 
the space $\Theta_h$ from
the DKT element defined in \Cref{s:dkt}, rotated by $\pi/2$
so that normal directions are mapped to tangential directions and
vice versa.
Upon defining the vector Curl of a scalar function $\varphi$
and the discrete Curl of a discrete function $\varphi_h\in V_h$ by
$$
 \operatorname{Curl} \varphi_h 
 = \begin{pmatrix} -\partial_2 \varphi_h \\ \partial_1 \varphi_h\end{pmatrix}
 = \begin{pmatrix}0&-1\\1&0\end{pmatrix}\nabla \varphi_h
\quad\text{and}\quad
 \operatorname{Curl}_h \varphi_h 
 = \begin{pmatrix}0&-1\\1&0\end{pmatrix}\nabla_h \varphi_h
,
$$
we claim that,
provided the two-dimensional
domain is simply-connected,
an element $\psi_h\in W_h$ satisfies
$\psi_h=\operatorname{Curl}_h \varphi_h$
for some $\varphi_h\in V_h$, the DKT space of \Cref{s:dkt},
if and only if $\int_T \operatorname{div} \psi_h=0$ 
for all elements $T$.
The ``only if'' implication is verified via the integration
by parts formula
\begin{align*}
	\int_T \operatorname{div} \operatorname{Curl}_h \varphi_h = \int_{\partial T} \nabla_h \varphi_h \cdot t = \int_{\partial T} \nabla \varphi_h \cdot t = 0,
\end{align*}
with a unit tangent $t$ of $\partial T$,
where we utilize the observation $\nabla_h \varphi_h \cdot t = \nabla \varphi_h \cdot t$ 
on $\partial T$ in the proof of \Cref{l:c4dkt}.
The ``if'' part follows from a dimension
count with the Euler formula on simply-connected domains
\cite{ErnGuermond2004}.
\Cref{f:commuting} illustrates the commuting discrete
relations.

If the domain $\Omega$ is assumed simply connected, it is
well known that there exists a stream function $u\in V$
such that $w=\operatorname{Curl} u$ and 
$\Delta^2 u = -\operatorname{rot} f$,
where as usual we denote by 
$\operatorname{rot} f=\partial_1f_2-\partial_2 f_1$
the scalar rotation in the plane.
The above discussion implies that an analogous relation holds
in the discrete system, namely there exists $u_h\in V_h$
such that $w_h = \operatorname{Curl}_h u_h$.
System \eqref{e:stokes_discrete} and the observation
that $I_{\mathit{RT}}\operatorname{Curl}_h$
equals $\operatorname{Curl} I_1$ for elements of $V_h$,
show that $u_h$ computed from $w_h$ from \eqref{e:stokes_discrete}
solves
\begin{equation*}
a_h(u_h, v_h) = (f,\operatorname{Curl}I_1 v_h) 
\quad\text{for all }v_h \in V_h.
\end{equation*}
This is the DKT system with right-hand side $-\operatorname{rot} f$
combined with the discrete operator $I_1$ on the right-hand
side.
We mention that the use of the nodal interpolation on the
right-hand side also simplifies the implementation in that
the basis of $V_h$ is not needed any more.
\Cref{cor:dkt-nodal} then yields:

\begin{corollary}[Bernardi-Raugel error bounds]\label{c:Bernardi-Raugel}
Assume the two-dimensional domain $\Omega$ is simply-connected
and $f\in [L^2(\Omega)]^2$. The error between the solution $w$
to \eqref{e:stokes} and the discrete solution $w_h$ to
\eqref{e:stokes_discrete} satisfies
\begin{equation*}
\| D(w-w_h) \|
\lesssim
\| (1-\Pi_0) Dw \|
+
\osc(\operatorname{rot}f,\mathcal T)
+
\sup_{v_h\in V_h \setminus \{0\}} 
  \langle \operatorname{rot} f,v_h-I_1 v_h \rangle/\|v_h\|_h.
\end{equation*}
\end{corollary}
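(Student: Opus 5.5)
The plan is to reduce the statement to \Cref{cor:dkt-nodal} through the discrete stream function identification established just before the corollary. Since $\Omega$ is simply connected, the continuous velocity is $w=\operatorname{Curl}u$ with $u\in V$ the stream function solving $\Delta^2u=-\operatorname{rot}f$ weakly, that is,
\[
a(u,v)=(f,\operatorname{Curl}v)_\Omega=\langle -\operatorname{rot}f,v\rangle \quad\text{for all } v\in V.
\]
This is the biharmonic problem \eqref{def:continuous-problem} with right-hand side $-\operatorname{rot}f\in H^{-1}(\Omega)$. The DKT space satisfies $V_h\subset H^1_0(\Omega)$, so the $H^{-1}$ setting and the corresponding oscillation are admissible. (The sign of the right-hand side does not affect any of the bounds.)

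On the discrete side, the Bernardi--Raugel velocity $w_h$ from \eqref{e:stokes_discrete} satisfies $\int_T\div w_h=0$ on every $T$, because $\div w_h\in P_0(\Tcal)$ and the constraint holds against all of $Q_h$ (the global mean of $\div w_h$ vanishes by the boundary condition). By the characterization above, $w_h=\operatorname{Curl}_h u_h$ for some $u_h\in V_h$.

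Next I test \eqref{e:stokes_discrete} with $v_h=\operatorname{Curl}_h\varphi_h$ for $\varphi_h\in V_h$. These test functions are elementwise divergence-free, so the pressure term drops out. Since rotation by $\pi/2$ is an isometry, $(Dw_h,D\operatorname{Curl}_h\varphi_h)=(D\nabla_hu_h,D\nabla_h\varphi_h)=a_h(u_h,\varphi_h)$. Using $I_{RT}\operatorname{Curl}_h=\operatorname{Curl}I_1$ on $V_h$, the right-hand side becomes $(f,\operatorname{Curl}I_1\varphi_h)=\langle-\operatorname{rot}f,I_1\varphi_h\rangle$. Thus $u_h$ is exactly the DKT solution with nodally interpolated right-hand side $-\operatorname{rot}f$, i.e.\ $\tilde u_h$ in \eqref{e:dkt-nodal-int}. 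Uniqueness follows from positive definiteness of $a_h$.

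For the error identity, $D(w-w_h)=R\,(D\nabla u-D\nabla_hu_h)$ with the rotation matrix $R$, so $\|D(w-w_h)\|=\|\sigma-\tilde\sigma_h\|$ and $\|(1-\Pi_0)Dw\|=\|(1-\Pi_0)\sigma\|$. Applying \Cref{cor:dkt-nodal} with data $-\operatorname{rot}f$ then gives the claim, since the oscillation and the supremum term are invariant under $f\mapsto -f$.

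The main obstacle is that \Cref{cor:dkt-nodal} is stated with its proof omitted, so its validity for $H^{-1}$ data must be checked. I would redo the proof of \Cref{thm:a-priori}, splitting $(f,e_h)$-type terms as $\langle g,I_1e_h-e_h\rangle+\langle g,e_h-Je_h\rangle$ with $g=-\operatorname{rot}f$. The first piece produces the supremum term and the second produces $\osc(g,\Tcal)$ via \Cref{d:osc}. A secondary point is to verify the commuting identity $I_{RT}\operatorname{Curl}_h=\operatorname{Curl}I_1$. Edge normal fluxes of $\operatorname{Curl}_h\varphi_h$ are tangential derivatives of $\nabla_h\varphi_h$ integrated along edges. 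These equal those of $\nabla\varphi_h$ by \eqref{def:discrete-gradient-1}--\eqref{def:discrete-gradient-2} and Simpson's rule on quadratics, so they equal the vertex differences of $\varphi_h$, which match those of $I_1\varphi_h$.
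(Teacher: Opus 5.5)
Your proposal is correct and follows the paper's route. You identify $w_h=\operatorname{Curl}_h u_h$ through the elementwise characterization, use $I_{RT}\operatorname{Curl}_h=\operatorname{Curl}I_1$ (your Simpson-rule check is a valid verification) to show that $u_h$ solves the DKT system with nodally interpolated data $-\operatorname{rot}f$, transfer the error through the isometric rotation, and invoke \Cref{cor:dkt-nodal}, whose extension to $H^{-1}$ data you sketch correctly. One minor slip: for Bernardi--Raugel, neither $\div w_h$ nor $\div\operatorname{Curl}_h\varphi_h$ is elementwise constant or zero (the normal edge bubbles and the non-gradient field $\nabla_h\varphi_h$ give piecewise affine divergence), so the correct statements are $\Pi_0\div w_h=0$ and $\Pi_0\div\operatorname{Curl}_h\varphi_h=0$, which is all your argument actually uses.
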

Note that, for $w \in H^2(\Omega)$, we obtain optimal first-order convergence as in \cite{LinkeMatthiesTobiska2016}. 
Theoretically, we can provide a reconstruction operator that avoids the consistency error in \Cref{c:Bernardi-Raugel}. Note that the mapping $\nabla v_h \mapsto \nabla_h v_h$ for $v_h \in P_3^-(T)$ is injective \cite{Braess2007} for any $T \in \Tcal$. Therefore, there exists a (locally computable) operator $R : W_h \to \operatorname{Curl} V_h$ such that any $v_h \in V_h$ satisfies $\operatorname{Curl} v_h = R \operatorname{Curl}_h v_h$.
Then $u_h = \operatorname{Curl}_h w_h$ for the discrete solution $w_h$ to the Stokes system \eqref{e:stokes_discrete} with the modified right-hand side $(f,R v_h)_\Omega$ instead of $(f,I_{\mathit{RT}}v_h)_\Omega$ solves the DKT system
\begin{align*}
	a_h(u_h,v_h) = \langle \operatorname{rot} f, v_h \rangle \quad\text{for all } v_h \in V_h.
\end{align*}

\begin{figure}
\begin{center}
\begin{tikzpicture}[node distance=2.0cm]
\node (A1) {$V_h$};
\node (A2)[right of=A1]{$W_h$};
\node (A3)[right of=A2]{$Q_h$};
\node (B1)[below of=A1]{$S^1_0(\mathcal T)$};
\node (B2)[right of=B1]{$RT_0$};
\node (B3)[right of=B2]{$Q_h$};
\draw[transform canvas={yshift=0.5ex},->] (A1) --(A2) node[above,midway] {\footnotesize $\operatorname{Curl}_h$};
\draw[transform canvas={yshift=0.5ex},->] (A2) --(A3) node[above,midway] {\footnotesize $\Pi\circ\div$};
\draw[transform canvas={xshift=0.5ex},->] (B1)--(B2) node[above,midway] {\footnotesize $\operatorname{Curl}$};
\draw[transform canvas={xshift=0.5ex},->] (B2)--(B3) node[above,midway] {\footnotesize $\operatorname{div}$};
\draw[transform canvas={xshift=0.5ex},->] (A1)--(B1) node[left,midway] {\footnotesize $I_1$};
\draw[transform canvas={xshift=0.5ex},->] (A2)--(B2) node[left,midway] {\footnotesize $I_{RT}$};
\draw[transform canvas={xshift=0.5ex},->] (A3)--(B3) node[left,midway] {\footnotesize $id$};
\end{tikzpicture}
\qquad
\begin{tikzpicture}[node distance=2.0cm]
\node (A1) {$Morley$};
\node (A2)[right of=A1]{$CR$};
\node (A3)[right of=A2]{$P_0(\mathcal T)$};
\node (B1)[below of=A1]{$S^1_0(\mathcal T)$};
\node (B2)[right of=B1]{$RT_0$};
\node (B3)[right of=B2]{$P_0(\mathcal T)$};
\draw[transform canvas={yshift=0.5ex},->] (A1) --(A2) node[above,midway] {\footnotesize $\operatorname{Curl}_\pw$};
\draw[transform canvas={yshift=0.5ex},->] (A2) --(A3) node[above,midway] {\footnotesize $\div_\pw$};
\draw[transform canvas={xshift=0.5ex},->] (B1)--(B2) node[above,midway] {\footnotesize $\operatorname{Curl}$};
\draw[transform canvas={xshift=0.5ex},->] (B2)--(B3) node[above,midway] {\footnotesize $\operatorname{div}$};
\draw[transform canvas={xshift=0.5ex},->] (A1)--(B1) node[left,midway] {\footnotesize $I_1$};
\draw[transform canvas={xshift=0.5ex},->] (A2)--(B2) node[left,midway] {\footnotesize $I_{RT}$};
\draw[transform canvas={xshift=0.5ex},->] (A3)--(B3) node[left,midway] {\footnotesize $id$};
\end{tikzpicture}
\

\end{center}
\caption{Commuting diagrams. Left: DKT and Bernardi--Raugel, where
         $S^1_0(\mathcal T)$ denotes the standard first-order finite element space.
         Right: Relations for discrete stream functions in the context
         of Morley and Crouzeix--Raviart elements.
         \label{f:complex-br-cr}%
        }
\label{f:commuting}
\end{figure}
In a similar fashion,
several known Stokes pairs result from taking nonconforming
plate elements as discrete stream functions.
The right-hand side differs from the one in the classical 
schemes and leads to pressure-robust modifications.

\begin{example}[Mini element and stabilized Zienkiewicz]
Similarly, the Mini element can be derived from a stabilized
Zienkiewicz element. It is known that the assignment
of $\theta_h$ as the first-order Lagrange interpolation of
$\nabla w_h$ in the DKT context does not lead to a convergent
scheme unless the meshes have a particular structure
\cite{LascauxLesaint1975}.
A possible stabilization is as follows.
The space of discrete gradients is that of first-order
Lagrange elements plus the cubic element bubbles, as is
well known from the Mini element. The assignment
$$
\nabla_h v_h (z) = \nabla v_h (z)
\quad\text{and}\quad
\int_T\nabla_h v_h
=
\int_T\nabla v_h
$$
is unisolvent and it can be checked that it satisfies
properties similar to DKT.
Again, a dimension argument shows that these are 
the discrete stream functions of the Mini element.
\end{example}

\begin{example}[TLLL generates pressure-robust Crouzeix--Raviart]
It is known \cite{FalkMorley1990} that the Morley
element \cite{Morley1968} provides discrete stream functions
for the Crouzeix--Raviart element, similar to the
previous examples.
Various authors have proposed a variation of the Morley element
that foresees to include the first-order Lagrange interpolation
of the method.
This was done by O{\~n}ate, Zarate, and Flores in
\cite{OnateZarateFlores1994} in the context of discrete Kirchhoff
elements (named TLLL therein)
and by Wang, Xu, and Hu \cite{WangXuHu2006} for
the singularly perturbed biharmonic equation.
In the Stokes context, Linke \cite{Linke2014} proposed the
use of the Raviart--Thomas interpolation on the right-hand
side for improving pressure robustness.
It is not difficult to see that these three approaches
are equivalent on simply-connected planar domains.
The TLLL element provides the discrete stream functions for the
Linke discretization.
This follows from the following fact.
The Curl of $I_1 v_h$, the Lagrange interpolation of a Morley
function, equals the RT interpolation of $\nabla_\pw v_h$.
\Cref{f:commuting} illustrates the situation.
\end{example}

\section{Numerical results}\label{s:num}

In this section we briefly report basic numerical results illustrating
the low-regularity error bounds and the performance of the
new three-dimensional DKT-like element.
For $n=3$, the error estimators implemented are based on the simplified evaluation
from \Cref{rem:simplified_est}, see also \Cref{cor:DKT}.

We consider the planar domain
$
\Omega_2 = \left(-1,1\right)^2 
 \setminus \left(\operatorname{conv}\{(0,0),(1,-1),(1,0)\}\right)
$.
Define 
$\omega\coloneqq 7\pi/4$ and $\alpha\coloneqq 0.50500969$. 
The exact singular solution \cite{Grisvard1992}
is given in polar coordinates by
\[
 u_2(r,\theta) 
 = (r^2\cos^2\theta -1)^2\,(r^2\sin^2\theta -1)^2\, r^{1+\alpha}\, g(\theta)
\]
with the function
\begin{equation*}
 g(\theta) =
 \left[ \frac{s_-(\omega) }{\alpha -1} 
              - \frac{s_+(\omega)}{\alpha+1}\right] 
    \big(c_-(\theta)-c_+(\theta)\big)
  - \left[\frac{s_-(\theta) }{\alpha - 1}
                      - \frac{s_+(\theta)}{\alpha+1}\right]
  \big(c_-(\omega)-c_+(\omega)\big),
\end{equation*}
where $s_\pm(t) \coloneqq \sin((\alpha\pm1)t)$
and $c_\pm(t) \coloneqq \cos((\alpha\pm1)t)$.

We solve the discrete biharmonic equation on sequences of uniform
meshes as well as adaptive meshes based on the error estimator
and D\"orfler marking \cite{Doerfler1996} with bulk parameter 1/3.
\Cref{fig:convhist}(left) displays the convergence history in this 
two-dimensional example.
All displayed errors are relative errors.
The symbol $\mathtt{ndof}$ refers to the number of degrees of
freedom.
For illustrating the performance of the three-dimensional generalization
of DKT, we consider the cylinder 
$\Omega_3\coloneqq\Omega_2\times (0,1)$ and the tensor-product
solution $u_3(x,y,z) \coloneqq u_2(x,y) (z-z^2)^2$.
\Cref{fig:convhist}(right) displays the convergence history
on uniform and adaptive meshes.
In both experiments, uniform mesh-refinement leads to convergence
in the $\sigma$ variable
with the asymptotic rate dictated by the singularity exponent of
the exact solution, and higher rates for weaker norms.
We observe that adaptive mesh refinement leads to the optimal
rates $1/n$ with respect to the number of degrees of freedom
for the error in the $\sigma$ variable. The errors in the weaker
norms converge at the doubled rate.
We observe efficiency indices $\eta/\|\sigma-\sigma_h\|$ of around 10.

\begin{figure}

{\tiny
\sf Legend:
\begin{tabular}{ccccc}
& $\frac{\|u-u_h\|}{\|u\|}$
& $\frac{\|\nabla u-\nabla_h u_h\|}{\|\nabla u\|}$ 
& $\frac{\|\sigma-\sigma_h\|}{\|\sigma\|}$ 
& $\frac{\eta}{\|\sigma\|}$ 
\\
uniform &
\ref{leg:L2unif} &
\ref{leg:H1unif} &
\ref{leg:H2unif} &
\ref{leg:etaunif}
\\
adaptive &
\ref{leg:L2adapt} &
\ref{leg:H1adapt} &
\ref{leg:H2adapt} &
\ref{leg:etaadapt}
\end{tabular}
}

\begin{minipage}{.49\textwidth}
\pgfplotstableread{
   ndof                     L2                       H1                       H2                       eta
   1.8000000000000000e+01   2.3843784189547257e-01   3.6391395793449438e-01   7.3119590218090891e-01   1.2425933262271487e+01
   5.4000000000000000e+01   2.0406416108180220e-01   3.1046068627563966e-01   5.5922892829522730e-01   7.6783416967081068e+00
   8.1000000000000000e+01   1.4991856145401328e-01   1.9646615456140615e-01   4.5710226985080865e-01   6.0977570406159156e+00
   1.3800000000000000e+02   1.2208555092502507e-01   1.1020758674658099e-01   3.4189733519989796e-01   4.2522923957716507e+00
   2.1900000000000000e+02   1.2756544839732406e-01   9.3489723152171508e-02   2.6906704544364818e-01   3.0983274128508582e+00
   3.5100000000000000e+02   9.6494747829398861e-02   6.4738741054930563e-02   2.1682825416017651e-01   2.3662526242017234e+00
   5.3400000000000000e+02   4.6936958023377502e-02   3.1402278051830858e-02   1.7751022219318102e-01   1.8958868446541930e+00
   7.5600000000000000e+02   3.7028830562696340e-02   2.2946508567724420e-02   1.4858424961275127e-01   1.4872357242304535e+00
   1.1520000000000000e+03   3.0732702346368948e-02   1.7964092862609794e-02   1.2061107727951867e-01   1.1582789814605652e+00
   1.7220000000000000e+03   2.3453006782122712e-02   1.2470096628247390e-02   9.7503686511439883e-02   9.0984513636074327e-01
   2.5230000000000000e+03   1.1420140047118161e-02   6.6547048475692648e-03   7.9417680416658240e-02   7.1986295398600908e-01
   3.8070000000000000e+03   9.3834895998253041e-03   5.0680688332599561e-03   6.6287132077701358e-02   5.7854858551509691e-01
   5.5530000000000000e+03   6.6623069480571300e-03   3.5288130225513900e-03   5.4833169473949769e-02   4.6554440701888322e-01
   8.0700000000000000e+03   5.1949023433169204e-03   2.7418984712004068e-03   4.6025240243349717e-02   3.8360214692038996e-01
   1.1670000000000000e+04   3.4328695276240363e-03   1.8200328978091561e-03   3.7965061674119067e-02   3.1217785230156553e-01
   1.7187000000000000e+04   2.4380916508619176e-03   1.2979089848138301e-03   3.1208356491210020e-02   2.5530893849287994e-01
   2.4861000000000000e+04   1.7227430636837574e-03   9.2825717020372811e-04   2.6104761265728520e-02   2.0882360785905024e-01
   3.5982000000000000e+04   1.1842249648735105e-03   6.3167670353446600e-04   2.1814122132375029e-02   1.7348565832181925e-01
   5.2380000000000000e+04   8.5526175445339771e-04   4.5079629925919332e-04   1.8017230834943526e-02   1.4405425558575780e-01
   7.5942000000000000e+04   5.7929207522136475e-04   3.1307994597176849e-04   1.4914082739642785e-02   1.1847072027528312e-01
   1.0903500000000000e+05   4.0779525960319400e-04   2.1925334311565612e-04   1.2502443580399989e-02   9.7466704048157396e-02

    }\adaptZzwei
\pgfplotstableread{
   ndof                     L2                       H1                       H2                       eta
   1.8000000000000000e+01   2.8216661164331752e-01   4.1647084822436586e-01   7.1393619102274575e-01   1.1335018010456691e+01
   1.1700000000000000e+02   1.6042084565619974e-01   1.3709914262804412e-01   4.0443728943935947e-01   5.2174069512265708e+00
   5.6700000000000000e+02   5.0041266780439235e-02   4.0383981329615813e-02   2.2069401329055321e-01   2.5541538691361287e+00
   2.4750000000000000e+03   1.4385507672058550e-02   1.2185779534389230e-02   1.2219209906356716e-01   1.3987307195958401e+00
   1.0323000000000000e+04   4.4523238731095554e-03   4.1844029874057677e-03   7.0838553564948559e-02   8.3231305985421067e-01
   4.2147000000000000e+04   1.6060601406905030e-03   1.6792524150405083e-03   4.3438301993391874e-02   5.2672383776764775e-01
   1.7030700000000000e+05   6.7319423896814085e-04   7.5223397632554942e-04   2.8022652230530992e-02   3.4834985421885417e-01
    }\unifZzwei
	\begin{tikzpicture}[scale=.7]
		\begin{loglogaxis}[
			ymin=1e-4,ymax=2e1,
			xmin=1e0,xmax=1e6,
			ytick={1e-4,1e-3,1e-2,1e-1,1e0,1e1,1e2},
			xtick={1e0,1e1,1e2,1e3,1e4,1e5,1e6}]
			\pgfplotsset{
				cycle list={%
					{cyan, mark=diamond, mark size=2pt},
					{cyan, mark=x, mark size=2pt},
					{cyan, mark=*, mark size=1.5pt},
					{cyan, mark=o, mark size=1.5pt},
					{magenta, mark=triangle, mark size=2.5pt},
					{magenta, mark=+, mark size=3pt},
					{magenta, mark=square*, mark size=1.5pt},
					{magenta, mark=square, mark size=1.5pt},
				},
				font=\sffamily\scriptsize,
				xlabel=$\mathtt{ndof}$,ylabel=
			}
			\addplot+ table[x=ndof,y=L2]{\unifZzwei};
			\addplot+ table[x=ndof,y=H1]{\unifZzwei};
			\addplot+ table[x=ndof,y=H2]{\unifZzwei};
			\addplot+ table[x=ndof,y=eta]{\unifZzwei};
            \addplot+ table[x=ndof,y=L2]{\adaptZzwei};
			\addplot+ table[x=ndof,y=H1]{\adaptZzwei};
			\addplot+ table[x=ndof,y=H2]{\adaptZzwei};
			\addplot+ table[x=ndof,y=eta]{\adaptZzwei};
  			\addplot+ [color=black,dashed,mark=none] coordinates{(1,6) (1e6,6e-3)};
 			\node(z) at  (axis cs:30,2e0)
 			[above] {$O (\mathtt{ndof}^{-1/2})$};
  			\addplot+ [color=black,dotted,mark=none] coordinates{(1,6e0) (1e6,6e-6)};
 			\node(z) at  (axis cs:1000,1e-3)
 			[above left] {$O (\mathtt{ndof}^{-1})$};
		\end{loglogaxis}
	\end{tikzpicture}
\end{minipage}
\begin{minipage}{.49\textwidth}
\pgfplotstableread{
   ndof                     L2                       H1                       H2                       eta
   0.0000000000000000e+00   9.9372318537394222e-01   9.5521621246723409e-01   1.0119987931786250e+00   1.0829892406137338e+01
   4.0000000000000000e+00   9.1825990100447907e-01   9.0900344266293731e-01   1.0967344916759785e+00   8.8386026905292141e+00
   8.0000000000000000e+00   7.3812378182917715e-01   8.1722205605922416e-01   1.1956877924799341e+00   7.2474525025864027e+00
   1.2000000000000000e+01   7.1990589031665519e-01   8.4582702557696232e-01   1.1773858671025106e+00   5.8847479815673660e+00
   2.4000000000000000e+01   5.8943512269067955e-01   7.9058406661554792e-01   1.1000064101566227e+00   4.8647821526631825e+00
   4.0000000000000000e+01   4.7709853249724243e-01   6.7622728526885567e-01   1.0429025093209616e+00   4.4655688607458544e+00
   6.8000000000000000e+01   4.1756440142986018e-01   5.9934012586324592e-01   9.5372656530668631e-01   3.8518892592658482e+00
   1.0800000000000000e+02   3.8282393815431320e-01   5.5127329653468649e-01   9.1917717597042436e-01   3.3922999746436879e+00
   1.6800000000000000e+02   3.2706466979438276e-01   4.7142105875792312e-01   8.2167505997313883e-01   2.9579977946110252e+00
   2.4400000000000000e+02   3.2959027464935925e-01   4.5292949535778110e-01   7.7714274032647612e-01   2.6797686508109728e+00
   3.8400000000000000e+02   3.0887441666796489e-01   3.7538693213186247e-01   7.0481225589244378e-01   2.7693449698850001e+00
   5.7600000000000000e+02   2.6719484006037580e-01   3.0269803630723696e-01   6.2074544273028798e-01   2.4778777697571814e+00
   8.4400000000000000e+02   2.1986012258043236e-01   2.3915234374858479e-01   5.5245910335120307e-01   2.1290219902213159e+00
   1.2200000000000000e+03   1.7251821115691313e-01   1.8779384007651168e-01   5.0423811220843506e-01   1.9338593431871178e+00
   1.7360000000000000e+03   1.6146289636755570e-01   1.6962025690428406e-01   4.6992000560504393e-01   1.8092066019360502e+00
   2.5720000000000000e+03   1.5156034009223976e-01   1.5388453212428954e-01   4.3190322646946328e-01   1.7281670180312885e+00
   3.7120000000000000e+03   1.3814543433814658e-01   1.3411409212059361e-01   3.8921753386730895e-01   1.5516309997386526e+00
   5.2280000000000000e+03   1.2463598840862440e-01   1.1577921561968177e-01   3.4671463806667918e-01   1.3955321305572754e+00
   7.4680000000000000e+03   9.2162877514004690e-02   8.5520687021800212e-02   3.0550836535217452e-01   1.2324869282217294e+00
   1.0368000000000000e+04   6.9114130363771828e-02   6.5906432579958166e-02   2.7634294864201930e-01   1.1097638768756199e+00
   1.4392000000000000e+04   5.6388963186352004e-02   5.4356797621226367e-02   2.5521468740221770e-01   1.0378006089365301e+00
   2.0364000000000000e+04   5.0735227513570361e-02   4.7488918342679667e-02   2.3228244419235994e-01   9.5342892792510980e-01
   2.8228000000000000e+04   4.2548333116755106e-02   3.9505966716408616e-02   2.1004168365294842e-01   8.6983696635126873e-01
   3.9324000000000000e+04   3.5923441976803000e-02   3.3006126556533753e-02   1.8789222369196351e-01   7.7981539469548089e-01
   5.5064000000000000e+04   2.8653889701086448e-02   2.6116994178066354e-02   1.6724258793222288e-01   6.9659194983769390e-01
   7.5396000000000000e+04   2.1756169642596291e-02   1.9963240505193471e-02   1.5055532461652826e-01   6.2465119640070954e-01
   1.0293200000000000e+05   1.7052055130287562e-02   1.5887374971558701e-02   1.3675922974076948e-01   5.6774589638798711e-01
    }\adaptZdrei
\pgfplotstableread{
   ndof                     L2                       H1                       H2                       eta
   5.2000000000000000e+01   5.7406894230688421e-01   7.2856673306878961e-01   1.0217561562359434e+00   4.0976992067165821e+00
   8.7600000000000000e+02   3.0741787177499791e-01   3.3299332544798554e-01   6.4276690545118476e-01   2.6601385093722563e+00
   9.4360000000000000e+03   1.1215479465844699e-01   1.0887194377426782e-01   3.7026162581924077e-01   1.7609990447670154e+00
   8.6460000000000000e+04   3.2673308986266872e-02   3.1008870471873153e-02   1.9627371017379072e-01   9.6963368041358855e-01
    }\unifZdrei
	\begin{tikzpicture}[scale=.7]
		\begin{loglogaxis}[legend pos=south west,legend cell align=left,
			legend style={fill=none},
			ymin=1e-2,ymax=1e1,
			xmin=1e0,xmax=1e6,
			ytick={1e-4,1e-3,1e-2,1e-1,1e0,1e1},
			xtick={1e0,1e1,1e2,1e3,1e4,1e5,1e6}]
			\pgfplotsset{
				cycle list={%
					{cyan, mark=diamond, mark size=2pt},
					{cyan, mark=x, mark size=2pt},
					{cyan, mark=*, mark size=1.5pt},
					{cyan, mark=o, mark size=1.5pt},
					{magenta, mark=triangle, mark size=2.5pt},
					{magenta, mark=+, mark size=3pt},
					{magenta, mark=square*, mark size=1.5pt},
					{magenta, mark=square, mark size=1.5pt},
				},
				legend style={{fill=none},
					at={(-0.13,.5)}, anchor=east},
				font=\sffamily\scriptsize,
				xlabel=$\mathtt{ndof}$,ylabel=
			}
			\addplot+ table[x=ndof,y=L2]{\unifZdrei};\label{leg:L2unif}
			\addplot+ table[x=ndof,y=H1]{\unifZdrei};\label{leg:H1unif}
			\addplot+ table[x=ndof,y=H2]{\unifZdrei};\label{leg:H2unif}
			\addplot+ table[x=ndof,y=eta]{\unifZdrei};\label{leg:etaunif}
            \addplot+ table[x=ndof,y=L2]{\adaptZdrei};\label{leg:L2adapt}
			\addplot+ table[x=ndof,y=H1]{\adaptZdrei};\label{leg:H1adapt}
			\addplot+ table[x=ndof,y=H2]{\adaptZdrei};\label{leg:H2adapt}
			\addplot+ table[x=ndof,y=eta]{\adaptZdrei};\label{leg:etaadapt}
  			\addplot+ [color=black,dashed,mark=none] coordinates{(1,6) (1e6,6e-2)};
			\node(z) at  (axis cs:10,1.5e0)
 			[above right] {$O (\mathtt{ndof}^{-1/3})$};
  			\addplot+ [color=black,dotted,mark=none] coordinates{(1,6e0) (1e6,6e-4)};
 			\node(z) at  (axis cs:1000,1e-1)
 			[left] {$O (\mathtt{ndof}^{-2/3})$};
		\end{loglogaxis}
	\end{tikzpicture}
\end{minipage}

\caption{Convergence history plots for the two-dimensional example (left)
         and the three-dimensional example (right).}
\label{fig:convhist}
\end{figure}

\section{Conclusive remarks}\label{s:conclusion}
The error analysis of this work can be applied to several
other, classical nonconforming FEM that satisfy (C1)--(C5).
They include but are not limited to the following examples:
The Morley element \cite{Morley1968,WangXu2006};
Fraeijs de Veubeke's elements of type I and II
\cite{FraeijsdeVeubeke1974,LascauxLesaint1975},
the Specht element \cite{Specht1988} and its 
generalization to higher space dimensions
by \cite{WangShiXu2007}, called \emph{New Zienkiewicz Triangle} (NZT) therein;
the Nilssen--Tai--Winther \cite{NilsenTaiWinther2001}.
For these, (C1)--(C2) follow from the degrees of freedom, while (C3)--(C4) are trivial because $\nabla_h = \nabla_\pw$.
Since quadratic polynomials are subset of the local trial spaces, the construction of an interpolation operator satisfying (C5) can follow the proof 
outlined in \Cref{s:dkt}.
The trial functions of Specht and NTW elements are
continuous, leading to convergence rates for the singular perturbation
problem as in \Cref{cor:DKT}.

\bibliographystyle{abbrv}
\bibliography{references}

\end{document}